
\documentclass{amsproc}

\usepackage{verbatim} 
\usepackage{amsmath}
\usepackage{amsfonts}
\usepackage{amssymb}
\usepackage{mathrsfs}
\usepackage{amsthm}
\usepackage{newlfont}

\usepackage{fullpage,amsmath, amssymb,amscd}
\usepackage{latexsym, epsfig, color}
\usepackage[mathscr]{eucal}
\usepackage{xypic, graphicx}
\usepackage{amscd}
\usepackage[all, cmtip]{xy}
\usepackage{tikz,tikz-cd,xcolor}
\usepackage{tikz-cd}
\usepackage{mathtools}

\usepackage{float, hyperref, nccmath, multirow, caption, subcaption}

\newcommand\cyr{%
 \renewcommand\rmdefault{wncyr}%
 \renewcommand\sfdefault{wncyss}%
 \renewcommand\encodingdefault{OT2}%
\normalfont\selectfont} \DeclareTextFontCommand{\textcyr}{\cyr}

\newtheorem{theorem}{Theorem}
\newtheorem{lemma}[theorem]{Lemma}
\newtheorem{corollary}[theorem]{Corollary}
\newtheorem{proposition}[theorem]{Proposition}

\def\Z{\mathbb Z}
\def\N{\mathbb N}
\def\Q{\mathbb Q}
\def\R{\mathbb R}
\def\C{\mathbb C}
\def\F{\mathbb F}

\def\P{\mathbb P}

\def\G{\mathbb G}

\def\fp{\mathfrak p}

\def\Z{\mathbb Z}
\def\N{\mathbb N}
\def\Q{\mathbb Q}
\def\R{\mathbb R}
\def\C{\mathbb C}
\def\F{\mathbb F}

\def\P{\mathbb P}

\def\G{\mathbb G}

\def\T{\mathbb T}

\def\fp{\mathfrak p}

\def\Im{\operatorname{Im}}
\def\Re{\operatorname{Re}}

\def\det{\operatorname{det}}
\def\diag{\operatorname{diag}}

\def\tr{\operatorname{tr}}

\def\End{\operatorname{End}}

\def\Aut{\operatorname{Aut}}
\def\Gal{\operatorname{Gal}}

\def\Frob{\operatorname{Frob}}

\def\mod{\operatorname{mod}}

\def\disc{\operatorname{disc}}
\def\car{\operatorname{char}}

\def\gcd{\operatorname{gcd}}

\def\GL{\operatorname{GL}}
\def\GSp{\operatorname{GSp}}

\def\PGL{\operatorname{PGL}}

\def\Li{\operatorname{Li}}

\def\O{\operatorname{O}}

\def\log{\operatorname{log}}

\def\ds{\displaystyle}

\newcommand\numberthis{\addtocounter{equation}{1}\tag{\theequation}}

\begin{document}

\title{
Distribution of primes of split reductions for abelian surfaces
}

\date{}

\author[Wang]{Tian Wang}
\address{Tian Wang, Max Planck Institute for Mathematics, Vivatsgasse 7, Bonn, Germany, 53111}
\email{twang@mpim-bonn.mpg.de; \ twang213@uic.edu}

\renewcommand{\thefootnote}{\fnsymbol{footnote}}
\footnotetext{\emph{Key words and phrases:} Abelian surfaces, endomorphism rings, distribution of primes, reduction type, square sieve}
\renewcommand{\thefootnote}{\arabic{footnote}}

\renewcommand{\thefootnote}{\fnsymbol{footnote}}
\footnotetext{\emph{2010 Mathematics Subject Classification:} 11G10, 11G25, 11N05 (Primary), 11N36, 11N45}
\renewcommand{\thefootnote}{\arabic{footnote}}

\thanks{}


\begin{abstract}
Let $A$ be an absolutely simple abelian surface defined over a number field $K$ with a commutative (geometric) endomorphism ring. 
Let $\pi_{A, \text{split}}(x)$ denote the number of primes $\fp$ in $K$ such that each prime has norm bounded by $x$, of good reduction for $A$, and the reduction of $A$ at $\fp$ splits. It is known that the density of such primes is zero.  
Under the Generalized Riemann Hypothesis for Dedekind zeta functions and possibly extending the field $K$, we prove that $\pi_{A, \text{split}}(x) \ll_{A, K} x^{\frac{41}{42}}\log x$ if the endomorphism ring of $A$ is trivial; $\pi_{A, \text{split}}(x) \ll_{A, F, K} \frac{x^{\frac{11}{12}}}{(\log x)^{\frac{2}{3}}}$ if $A$ has real multiplication by a real quadratic field $F$; $\pi_{A, \text{split}}(x) \ll_{A, F, K} x^{\frac{2}{3}}(\log x)^{\frac{1}{3}}$ if $A$ has complex multiplication by a CM field $F$. These results improve the bounds by  J. Achter in 2012 and D. Zywina in 2014. We also provide  better bounds under other credible conjectures. 
\end{abstract}

\maketitle
\section{Introduction}
Let $A$ be an absolutely simple abelian variety of dimension $g$ defined over a number field $K$.  Let $N_A$ be the norm of the conductor ideal of $A$. For any prime $\fp\nmid N_A$ (i.e., $\fp$ is coprime to $N_A$) that is in the set of nonzero prime ideals  $\Sigma_K$ of $K$, the reduction of $A$ modulo $\fp$, denoted by $\overline{A}_{\fp}$, is  an abelian variety defined over $\F
_{\fp}$.  A natural question to ask is if  $\overline{A}_{\fp}$ splits, i.e., if $\overline{A}_{\fp}$ is isogenous over $\F_{\fp}$ to a product of  smaller dimensional abelian subvarieties over $\F_\fp$.

It turns out that there is a strong connection between the (upper) density of primes $\fp$ such that $\overline{A}_{\fp}$ splits and the structure of the geometric endomorphism ring $\End_{\overline{K}}(A)$ of $A$. It is known that for an absolutely simple abelian variety $A/K$,  if $\End_{\overline{K}}(A)\simeq \Z$ and the dimension of $A$ is 2, 6 or odd,  the density of primes $\fp$ such that $\overline{A}_{\fp}$ splits is zero (see \cite[Corollary 6.10, p. 179] {Ch97});  if $A$ has complex multiplication or is of $\GL_2$-type over $\Q$ such that  $\End_{\overline{K}}(A)$ commutative,  a similar result holds (see \cite[Theorem 3.1, p. 12, Theorem 4.1, p. 17]{MuPa08}). On the other hand, if $\End_{\overline{K}}(A)$ is isomorphic to an indefinite quaternion division algebra, then for each prime $\fp\nmid N_A$,  $\overline{A}_\fp$ splits (see \cite[Theorem 2, p. 140]{Ta66}). In light of these observations, Murty and Patankar  \cite[Theorem 5.1, p. 3]{MuPa08} conjectured that the density of primes $\fp$ such that $\overline{A}_{\fp}$ splits is zero if and only if $\End_{\overline{K}}(A)$ is commutative.

In 2009, Achter proved one direction of the Murty-Patankar Conjecture  \cite[Theorem B]{Ac09}, and he also showed the conjecture holds for abelian varieties  that satisfy the Mumford-Tate Conjecture  \cite[Theorem A]{Ac09}. Building on these results, Zywina \cite[Theorem 1.2, p. 5043]{Zy14} proved the other direction of the Murty-Patankar Conjecture  under the Mumford-Tate Conjecture. 

In the case of an absolutely simple abelian surface $A/K$, more information is available due to the validity of the Mumford-Tate Conjecture (see e.g., \cite[Corollary 4.5, p.12]{Lo16}).  From the discussions above,  if $\End_{\overline{K}}(A)$ is commutative, then for almost all primes $\fp$, $\overline{A}_{\fp}$ does not split into a product of two elliptic curves. Therefore, it is natural to investigate the exceptional primes, namely, the primes for which $\overline{A}_{\fp}$ splits.

Motivated by the Lang-Trotter philosophy \cite{LaTr76}, Achter and Howe \cite[Conjecture 1.1, p. 40]{AcHo17} predicted that for a principally polarized and absolutely simple abelian surface  $A/K$ with $\End_{\overline{K}}(A)\simeq \Z$,  there exists a constant $C_{A/K}>0$ depending  on $A/K$, such that as $x\to \infty$,
\begin{equation}\label{AH-conj}
\pi_{A, \text{split}}(x)\coloneqq \#\{\fp \in \Sigma_K: N(\fp)\leq x, \fp\nmid N_{A}, \overline{A}_{\fp} \text{ splits}\}  \sim C_{A/K}\frac{x^{\frac{1}{2}}}{\log x}.
\end{equation}

Furthermore, upper bounds for $\pi_{A, \text{split}}(x)$ when $A$ is an absolutely simple abelian variety are given by Achter \cite[Theorem B, pp. 42-43]{Ac12} and Zywina \cite[Theorem 9.2, p. 5076]{Zy14}. They related the property that  $\overline{A}_{\fp}$ splits with the property that  the characteristic polynomial of  the Frobenius endomorphism  of $\overline{A}_{\fp}$  is reducible. By using  large sieve for Galois representations, they proved that for an absolutely simple abelian variety $A/K$ of dimension $g$ such that $\End_{\overline{K}}(A)$ is commutative, assuming the Generalized Riemann Hypothesis for Dedekind zeta functions (GRH) and the Mumford-Tate Conjecture for $A$  and by possibly increasing the field $K$, the following bound holds: 
\begin{equation}\label{M-T-bound}
\pi_{A, \text{split}}(x) \ll_{A, K} x^{1-\frac{1}{4d+2r+2}}(\log x)^{\frac{2}{2d+r+1}},
\end{equation}
where $d$ and $r$ are the dimension and the rank of the Mumford-Tate group of $A$, respectively.  Murty \cite[p. 267]{Mu05} also stated that if $A/\Q$ is absolutely simple of $\GL_2$-type with a commutative geometric endomorphism ring, then $\pi_{A, \text{split}}(x) \ll_{A, K} \frac{x^{\frac{13}{14}}}{(\log x)^{\frac{1}{7}}}$. More recently,  Shankar and Tang \cite{ShTa20} 
 proved that, though the density of exceptional primes is zero, there are infinitely many such primes. 

In order to specialize the result of (\ref{M-T-bound}) to an abelian surface, we need to understand the endomorphism algebra of $A$.  By Albert's classification of endomorphism algebras \cite[p. 203]{Mu70}, if $A$ is an abelian surface, then $\End_{\overline{K}}(A)$ 
 is either trivial or isomorphic to an order in a real quadratic field or an order in a CM quartic field. Then by (\ref{M-T-bound}), we obtain the following results in each case:
 \begin{enumerate}
 \item[(a)] if $\End_{\overline{K}}(A)\simeq \Z$, then $\pi_{A, \text{split}}(x) \ll_{A, K} x^{\frac{51}{52}} (\log x)^{\frac{1}{13}}$; \label{AcZy}
 \item[(b)]  if $\End_{\overline{K}}(A)$ is isomorphic to an order in a real quadratic field, then $\pi_{A, \text{split}}(x)  \ll_{A, K} x^{\frac{35}{36}} (\log x)^{\frac{1}{9}}$; \label{AcZy-RM}
 \item[(c)]  if $\End_{\overline{K}}(A)$ is isomorphic to an order in a CM quartic field, then $\pi_{A, \text{split}}(x) \ll_{A, K} x^{\frac{19}{20}} (\log x)^{\frac{1}{5}}$. \label{AcZy-CM}
 \end{enumerate}

By providing a detailed description of exceptional primes implied by a criterion of Maisner and Nart, along with a counting method of Cojocaru and David, we obtain upper bounds for $\pi_{A, \text{split}}(x)$ that improve upon (a), (b), and (c) under GRH and give better  bounds under additional credible hypotheses.


\begin{theorem}\label{main-thm}
Let $A/K$ be an absolutely simple abelian surface with $\End_{\overline{K}}(A)\simeq \Z$. We have that for all sufficiently large real number $x$, 
\begin{enumerate}
\item[(i)] assuming GRH, 
\[
\pi_{A, \text{split}}(x) \ll_{A, K} x^{\frac{41}{42}}(\log x)^{\frac{1}{21}};
\]
\item[(ii)] assuming GRH  and Artin's Holomorphic Conjecture (AHC) for Galois extensions of number fields, 
\[
\pi_{A, \text{split}}(x) \ll_{A, K} x^{\frac{21}{22}}(\log x)^{\frac{1}{11}};
\]
\item[(iii)] assuming GRH, AHC  and Pair Correlation Conjecture (PCC) for Galois extensions of number fields, 
\[
\pi_{A, \text{split}}(x) \ll_{A, K} x^{\frac{5}{6}}(\log x)^{\frac{1}{3}}.
\]
\end{enumerate}
\end{theorem}

\begin{theorem}\label{main-thm-RM}
Let $A/K$ be an absolutely simple abelian surface such that  $\End_{\overline{K}}(A)=\End_{K}(A)$ is isomorphic to an order in a real quadratic field $F$.  We have that for all sufficiently large real number $x$, 
\begin{enumerate}
\item[(i)] assuming GRH, 
\[
\pi_{A, \text{split}}(x) \ll_{A, F, K} \frac{x^{\frac{11}{12}}}{(\log x)^{\frac{2}{3}}};
\]
\item[(ii)] assuming GRH and AHC,
\[
\pi_{A, \text{split}}(x) \ll_{A, F, K}  \frac{x^{\frac{6}{7}}}{(\log x)^{\frac{3}{7}}};
\]
\item[(iii)] assuming GRH, AHC,  and PCC,
\[
\pi_{A, \text{split}}(x) \ll_{A, F, K}  x^{\frac{2}{3}}\log x.
\]
\end{enumerate}
\end{theorem}

\begin{theorem}\label{main-thm-CM}
Let $A/K$ be an absolutely simple abelian surface such that  $\End_{\overline{K}}(A)=\End_{K}(A)$ is isomorphic to an order in a  CM field $F$. Assume that $F\subseteq K$ and 
 that the  
kernel of the homomorphism $\phi: T_{F'}\to T_F$ defined in \cite[p. 85]{Ri80}  is connected, where $F'$, $T_F$, and $T_{F'}$ are the reflex field of $F$, the  algebraic tori defined by $F$ and by $F'$, respectively.
 Assuming GRH, we have that for all sufficiently large real number $x$,
\[
\pi_{A, \text{split}}(x) \ll_{A, F, K} x^{\frac{2}{3}}(\log x)^{\frac{1}{3}}.
\]
\end{theorem}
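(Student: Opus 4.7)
\begin{ourproof}
The plan is to use the theory of complex multiplication (Shimura-Taniyama, Ribet \cite{Ri80}) to translate the splitting condition into a Frobenius condition in an abelian extension of $K$, and then to apply the effective Chebotarev density theorem under GRH.

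\textbf{Step 1: Characterization of split primes.} By Maisner-Nart \cite{MaNa02}, $\overline{A}_\fp$ splits over $\F_\fp$ iff its Weil polynomial $f_\fp(T) \in \Z[T]$ is reducible. Since $E \subseteq K$, the Frobenius $\pi_{A,\fp}$ lies in $\mathcal{O}_E$ and $f_\fp$ is the characteristic polynomial of $\pi_{A,\fp}$ acting on a rank-one $E$-module; hence $f_\fp$ is reducible iff $\Q(\pi_{A,\fp})\subsetneq E$. For primes $\fp$ of residue degree one, the Weil relation $\pi_{A,\fp}\overline{\pi_{A,\fp}} = N\fp$ rules out $\pi_{A,\fp}\in\Q$ or $\pi_{A,\fp}\in E^+$, since either would force $\pi_{A,\fp} = \pm\sqrt{N\fp}$, which is not an algebraic integer when $N\fp$ is not a square. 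The contribution from higher-residue-degree primes (a set of density zero, controlled under GRH-Chebotarev on the compositum of $K$ with the Galois closure of $E/\Q$) is bounded by $O(x^{1/2}\log x)$ and is absorbed in the final estimate. Thus the split condition reduces, up to negligible error, to $\pi_{A,\fp}$ lying in an imaginary quadratic subfield $E_0\subsetneq E$, which occurs only when $E$ is biquadratic.

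\textbf{Step 2: Hecke character and Galois image.} By the main theorem of complex multiplication, there is an algebraic Hecke Grossencharacter $\psi\colon\A_K^\times\to E^\times$ with $\psi(\fp)=\pi_{A,\fp}$ at unramified $\fp$. The connectedness hypothesis on $\ker(\phi\colon T_{E'}\to T_E)$ guarantees, as in \cite{Ri80}, that the image of $\psi$ modulo an integral ideal $\fm$ of $K$ is (an open subgroup of bounded index in) the maximal allowable quotient $G_\fm$, and that $|G_\fm|\asymp N(\fm)^{3+o(1)}$, reflecting the rank $3$ of the Mumford-Tate torus attached to $A$. Letting $L_\fm/K$ be the corresponding abelian extension, the condition $\pi_{A,\fp}\bmod\fm \in \mathcal{O}_{E_0}/(\fm\mathcal{O}_E\cap\mathcal{O}_{E_0})$ cuts out a subset $C_\fm\subset G_\fm = \Gal(L_\fm/K)$ of cardinality $|C_\fm|\asymp N(\fm)^{1+o(1)}$, so that $|C_\fm|/|G_\fm|\asymp N(\fm)^{-2+o(1)}$.

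\textbf{Step 3: Chebotarev and optimization.} Under GRH for $\zeta_{L_\fm}$, the effective Chebotarev density theorem of Lagarias-Odlyzko, applied to the set $C_\fm$ and summed over its (abelian) class elements, gives
\begin{equation*}
\pi_{A,\text{split}}(x) \ \ll \ \frac{|C_\fm|}{|G_\fm|}\cdot\frac{x}{\log x} + |C_\fm|\, x^{1/2}\log\!\bigl(N(\fm)\,x\bigr) \ \ll \ \frac{x}{N(\fm)^{2}\log x} + N(\fm)\, x^{1/2}\log\!\bigl(N(\fm)\,x\bigr).
\end{equation*}
Balancing the two terms by taking $N(\fm)\asymp x^{1/6}(\log x)^{-2/3}$ yields $\pi_{A,\text{split}}(x)\ll x^{2/3}(\log x)^{1/3}$, as claimed.

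The main obstacle is Step 2: extracting the precise orders of magnitude $|G_\fm|\asymp N(\fm)^{3}$ and $|C_\fm|\asymp N(\fm)$, which requires a careful analysis of the image of the $\ell$-adic Galois representation attached to $\psi$ in terms of the reflex type norm. The connectedness assumption on $\ker\phi$ is invoked here to rule out any unexpected degeneration of the Galois image. Once Step 2 is secure, Steps 1 and 3 proceed by standard techniques.
\end{ourproof}
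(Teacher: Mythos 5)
Your overall strategy (use the CM/toral structure of the Galois image to turn splitting into a Chebotarev condition in an abelian extension, then optimize under GRH) is the same as the paper's, but the key quantitative claim in your Step 2 is wrong, and the theorem does not follow from your numbers. The splitting locus is a \emph{codimension-one}, not codimension-two, subset of the rank-$3$ Mumford--Tate torus. Concretely, with $\fm$ of norm $m$ and $|G_\fm|\asymp m^{3}$ (the $\fm$-points of the torus $\{x\in T_E : x\bar{x}\in\G_m\}$, which is the paper's $\T(\ell)$ with $m=\ell$), the condition $\pi_{A,\fp}\in E_0$ for an imaginary quadratic $E_0\subset E$ cuts out the $\fm$-points of the two-dimensional subtorus $T_{E_0}$ (note $T_{E_0}$ is automatically contained in the Mumford--Tate torus since $x\bar x=N_{E_0/\Q}(x)\in\Q$ for $x\in E_0$), so $|C_\fm|\asymp m^{2}$ and $|C_\fm|/|G_\fm|\asymp m^{-1}$, not $m^{-2}$. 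Equivalently, in the paper's coordinates the conditions $\lambda_1\lambda_2=\lambda_1'\lambda_2'$ and $\lambda_1+\lambda_2=\lambda_1'+\lambda_2'$ force $\{\lambda_1',\lambda_2'\}=\{\lambda_1,\lambda_2\}$, giving $\asymp m^2$ elements out of $\asymp m^3$ (Proposition 11). With the corrected counts your Step 3 reads $\frac{x}{m\log x}+m^{2}x^{1/2}\log(mx)$, which optimizes to $x^{5/6}/\log x$; even upgrading the error term to the Murty--Murty--Saradha form $|C_\fm|^{1/2}x^{1/2}\log(mx)$ (legitimate here since the extension is abelian, so AHC is automatic) only yields $x^{3/4}$.

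The missing idea that recovers the exponent $\tfrac23$ is to descend from $K(A[\ell])$ to the fixed field of the homotheties $\Lambda(\ell)$: the splitting condition is invariant under scalar multiplication, so one may work in $\T(\ell)/\Lambda(\ell)$ of size $\asymp\ell^{2}$ with conjugacy set of size $\asymp\ell$ (same density $1/\ell$, but now $|\cal{C}|^{1/2}\asymp\ell^{1/2}$ in the error term). Then $\frac{x}{\ell\log x}+\ell^{1/2}x^{1/2}\log(\ell x)$ balances at $\ell\asymp x^{1/3}(\log x)^{-4/3}$ and gives $x^{2/3}(\log x)^{1/3}$. Your Step 1 is an acceptable variant of the paper's Lemma 18 (the paper instead shows $P_{A,\fp}(X)=(X^2+b_1X+p)^2$ via reduction of endomorphisms), and your appeal to Ribet/Banaszak--Gajda--Kraso\'n for the Galois image matches the paper; but as written the proposal both miscounts $|C_\fm|$ and omits the quotient-by-homotheties step, so the claimed bound is not established.
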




We make a few remarks on these theorems. Firstly, the bounds presented in Theorem \ref{main-thm} actually hold for the following counting function:
\begin{equation}\label{abs-simple}
\pi'_{A, \text{ split}}(x) :=\#\{\fp \in \Sigma_K: N(\fp) \leq x, \fp\nmid N_{A}, \overline{A}_{\fp} \text{ splits over } \overline{\F}_{\fp}\}.
\end{equation} 
We also do not need to assume the abelian surface $A/K$ is \emph{absolutely} simple. Indeed, in the proof, we only use the fact that $A/K$ is simple to 
get the desired $\ell$-adic Galois image  for all sufficiently large prime $\ell$.
Next, the assumptions made on $\End_{\overline{K}}(A)$ and $\phi$ in Theorem \ref{main-thm-RM} and Theorem \ref{main-thm-CM} are necessary to obtain an explicit description of the $\ell$-adic Galois image of $A$ for all sufficiently large primes $\ell$.
In Theorem \ref{main-thm-CM}, we do not consider the bounds under the assumption of AHC and PCC because, using our approach, the bound remains the same even when these conjectures are assumed. 
Lastly, Theorem \ref{main-thm-CM}
is nontrivial only if $K\neq \Q$, since it is known that $\End_{\overline{\Q}}(A)\neq \End_{\Q}(A)$ for abelian varieties $A$ over the rational $\Q$. 

 The proofs of Theorem 1, Theorem 2, and Theorem 3 are presented in Subsection 5.2, Subsection 5.3, and Subsection 5.4, respectively.  The proofs of these theorems rely on the characterization of an absolutely simple abelian surface defined over a finite field $\F_q$ using its characteristic polynomial of the Frobenius endomorphism, as discussed in Section 4. Then, in Section 5, we encode the prescribed properties of the characteristic polynomials into Chebotarev counting functions in   carefully chosen number field extensions associated to $A$, enabling us to successfully apply various versions of the effective Chebotarev Density Theorem.

More specifically, when $\End_{\overline{K}}(A)\simeq \Z$, we apply a square sieve, which is naturally applicable due to an evident square condition for the invariants of the prescribed  characteristic polynomial of  Frobenius. This is also one of the remarkable difference compared with the large sieve approach by Achter and Zywina. When $\End_{\overline{K}}(A)=\End_{K}(A)$ is isomorphic to an order in a real quadratic or a CM quartic field $F$, we take  advantage of the extra endomorphisms and prove that if $\fp$ is a prime such that $\overline{A}_{\fp}$ splits, then the corresponding characteristic polynomial of the Frobenius is a square, which offers more precision than the reducibility condition used by Achter and Zywina. This precision enables us to derive better upper bounds.

As an application of our results, we consider an  absolutely simple abelian surface $A/K$ and look at if the reduction of $A$ at $\fp\in \Sigma_K$ is isomorphic to the Jacobian of a genus 2 curve. There is not obvious because of the results in \cite[Theorem 1.2, p. 352]{Ru90}.
However, we prove that the reduction of $A$ at almost all primes is isogenous over  $\F_{\fp}$ to the Jacobian of a genus two (smooth projective) curve. Specifically, for an absolutely simple abelian surface $A/K$, we have for all sufficiently large real number $x$, 
 \begin{equation}\label{cor-1}
 \#\{\fp \in \Sigma_K: N(\fp)\leq x, \fp \nmid N_A, \overline{A}_{\fp} \text{ is not $\F_{\fp}$ isogenous to the Jacobian of a genus 2 curve}\}\ll \pi'_{A, \text{split}}(x).
 \end{equation}
In particular, by Theorem \ref{main-thm},  the number of primes $\fp\in \Sigma_K$ with $N(\fp)\leq x$ for which $\overline{A}_{\fp}$ is $\F_{\fp}$ isogenous to the Jacobian of a genus two curve, shares the same upper bound with $\pi_{A, \text{split}}(x)$  in Theorem \ref{main-thm}, under each assumption. 

As another application, for an absolutely simple abelian surface $A/K$ such that $\End_{\overline{K}}(A)$ is commutative,  we investigate the distribution of primes $\fp\in \Sigma_K$, $\fp\nmid N_A$ such that the number of $\F_{\fp}$-points of $\overline{A}_{\fp}$ reaches the extremal values $(q+1\pm \lfloor 2\sqrt{q}\rfloor)^2$, where $q=|\F_{\fp}|$ and  $\lfloor x \rfloor$ denotes the largest integer less than $x$. Investigation of primes with a similar property was first made by Serre \cite{Se83} for algebraic curves over finite fields. Recall that in the setting of elliptic curves over the rationals, such primes are called {\emph{extremal primes}}.  
We derive an upper bound for the number of extremal primes for abelian surfaces using our main theorems and a result from \cite{AuHaLa13}. The statement is the following.
\begin{corollary}\label{main-cor}
Let $A/K$ be an absolutely simple abelian surface. Then, we have  
 \[\#\{\fp \in \Sigma_K: N(\fp)\leq x, \fp\nmid N_A, |\overline{A}_{\fp}(\F_{\fp})|=(q+1\pm \lfloor 2\sqrt{q}\rfloor)^2 \}\ll \pi_{A, \text{split}}(x),\]
 where $q=|\F_{\fp}|$.
In particular,  under each setting and assumption  in  Theorem \ref{main-thm}, Theorem \ref{main-thm-RM}, and Theorem \ref{main-thm-CM}, the same bound holds for the number of extremal primes. 
\end{corollary}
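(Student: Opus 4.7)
The plan is to prove the set-theoretic inclusion that every extremal prime (in the sense of the corollary) is a split prime, so that the upper bounds on $\pi_{A,\text{split}}(x)$ coming from Theorem~\ref{main-thm}, Theorem~\ref{main-thm-RM}, and Theorem~\ref{main-thm-CM} immediately transfer to the extremal-prime counting function.

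First, writing $q = |\F_\fp|$ and $m = \lfloor 2\sqrt{q}\rfloor$, I would invoke the classification due to Aubry, Haloui, and Lachaud \cite{AuHaLa13} of abelian surfaces over a finite field whose point count attains the Weil-type extremal value $(q+1\pm m)^2$. The content needed is that in this extremal regime the characteristic polynomial of the Frobenius endomorphism of $\overline{A}_\fp$ is forced to equal the square $(T^2\mp mT+q)^2$. By Honda--Tate theory, this factorization then shows that $\overline{A}_\fp$ is $\F_\fp$-isogenous to $E\times E$, where $E/\F_\fp$ is the elliptic curve whose Frobenius satisfies $T^2\mp mT+q=0$. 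In particular $\overline{A}_\fp$ splits over $\F_\fp$, so every extremal prime is counted by $\pi_{A,\text{split}}(x)$, yielding the set-theoretic inclusion
\[
\#\{N_{K/\Q}(\fp)\leq x : \fp\nmid N_A,\ |\overline{A}_\fp(\F_\fp)| = (q+1\pm m)^2\} \leq \pi_{A,\text{split}}(x).
\]

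Second, for the ``in particular'' half of the corollary, I would simply substitute the appropriate bound on $\pi_{A,\text{split}}(x)$ from Theorem~\ref{main-thm}, \ref{main-thm-RM}, or \ref{main-thm-CM}, according to whether $\End_{\overline{K}}(A)$ is trivial, is an order in a real quadratic field $E$, or is an order in a quartic CM field $E$ (with the additional hypotheses on $E\subseteq K$ and on the kernel of $\phi$ inherited from Theorem~\ref{main-thm-CM}). No further analytic work is required beyond the main theorems themselves.

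The essential, and really only, ingredient is the extremal-point characterization in the first step; the Chebotarev and sieve machinery is already packaged inside the three main theorems. Thus the principal obstacle is not analytic but algebraic-geometric: one must carefully match the conventions of \cite{AuHaLa13}, which is typically phrased in terms of bounds on the traces of Frobenius on abelian surfaces over $\F_q$, with the point-count formulation used here, and verify that attaining $(q+1\pm m)^2$ genuinely \emph{characterizes} the square Frobenius polynomial rather than being merely realizable by it. Once this matching is confirmed, the inclusion-and-substitution argument above yields the corollary directly.
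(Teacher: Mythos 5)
Your proposal is correct and follows essentially the same route as the paper: both deduce from \cite[Corollaries 2.2 and 2.14]{AuHaLa13} that an extremal prime forces $P_{A,\fp}(X)=(X^2\pm\lfloor 2\sqrt{q}\rfloor X+q)^2$, conclude that $\overline{A}_{\fp}$ splits, and then quote the three main theorems. The only (minor) difference is in how splitness is extracted from the square characteristic polynomial: the paper observes that $\Delta_{A,\fp}=0$ is a square and invokes the Maisner--Nart criterion \cite[Theorem 2.9]{MaNa02} for (non)simplicity, whereas you invoke Honda--Tate to get an isogeny $\overline{A}_{\fp}\sim E\times E$ --- which is fine, but strictly speaking requires the extra (easy) check that the Weil number of $X^2\mp\lfloor 2\sqrt{q}\rfloor X+q$ corresponds to an elliptic curve rather than to a simple abelian surface with $e=2$, a case the Maisner--Nart criterion rules out automatically.
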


In the last section, we expand on the split reductions for absolutely simple abelian surfaces of type II and IV according to Albert's classification and discuss the split reduction problem from a computational perspective. 
\bigskip

{\bf{Notation}}
\begin{itemize}
\item Let $\cal{A}$ be a finite set. We use $|\cal{A}|$ and $\#\cal{A}$ to denote the cardinality of $\cal{A}$, depending on whether the elements of $\cal{A}$ are spelled out or not. 
\item Given two real functions $f_1, f_2$,
we write $f_1 \ll f_2$
(or $f_1 = \O(f_2)$)
if 
 there exist positive constants $C$ and $x_0$ such that 
$|f_1(x)| \leq C \ |f_2(x)|$ for all $x>x_0$.
We denote by
$f_1 \ll_D f_2$ (or $f_1 = \O_D(f_2)$),
if 
$f_1 \ll f_2$
and
the  $\O$-constant $C$ depends on the property $D$.
We denote by $f_1 \asymp f_2$ if  $f_1 \ll f_2$ and $f_2\ll f_1$.
Finally, we say 
$f_1 \sim f_2$ 
if
$\lim_{x \rightarrow \infty} \frac{f_1(x)}{f_2(x)} = 1$.


\item We write  $p$ and $\ell$ for rational primes. 
We denote by  $\pi(x)$  the number of primes $p$ such that $p$ is no larger than a positive real number $x>2$. Recall 
the Prime Number Theorem asserts that
\[ \pi(x) \sim \Li (x) \sim \frac{x}{\log x},\]
where
$\Li (x)\coloneqq \int_{2}^x \frac{1}{\log t} \ d t$.

\item For any integer $n$,  let $v_p(n)$ denote the $p$-adic valuation of $n$. More specifically, if there is a decomposition $n=p^am$, where $a$ is a nonnegative integer and if  $\gcd(p, m)=1$, we have $v_p(n)=a$. 
If $n$ is a positive integer, we denote by $\Z/n\Z$ the ring of congruence classes modulo $n$. We denote by  $\Z_{p} \coloneqq \varprojlim_{n} \Z/p^n\Z$ as the ring of $p$-adic integers. For a prime power $q$, we denote by $\F_q$ the finite field with $q$ elements. 

\item For a fixed integer $a$, we use the standard notation  
$\left(\frac{a}{p} \right)$ for the Legendre symbol.  If $p\mid n$ and $a\in \Z/n\Z$, we use the notation $ \left(\frac{a}{p} \right)$ to denote  $\left(\frac{\hat{a}}{p} \right)$, where $\hat{a}\in \Z$ is any preimage of  
$a$ under the canonical projection
$\Z \twoheadrightarrow \Z/n\Z$.

\item 
Given a nonzero unitary commutative ring $R$, we denote by $R^{\times}$ the group of multiplicative units of $R$. 
For an integer $n \geq 1$, 
we use $M_{n\times n}(R)$ to denote the collection of $n \times n$ matrices with entries in $R$. We denote by $I_n\in M_{n\times n}(R)$ be the identity matrix and by $\diag(a_1, \ldots, a_n)\in M_{n\times n}(R)$
 the diagonal matrix with diagonal entries $a_1, \ldots, a_n\in R$.
For any  $M\in M_{n\times n}(R)$,  we denote by $M^{t}$, $\tr M$, $\det M$, and  $\car_M(X) \coloneqq \det(I-XM) \in R[X]$  the transpose of $M$, the trace of $M$,  the determinant of $M$, and the characteristic polynomial of $M$, respectively. 
We use the standard notation $\GL_{n}(R)$ for the general linear group.

\item  Let $K$ be a number field and let $\mathcal{O}_K$ be the ring of integers of $K$. We denote the absolute degree  of $K$ by  $n_K$ and the absolute discriminants of $K$ by  $d_K$. For each ideal $I\unlhd \cal{O}_K$, we denote  the  norm of the ideal $I$ by $N(I)$. We denote by $\Sigma_K$ the set of nonzero primes in $\cal{O}_K$. We say $\fp\in \Sigma_K$ has degree 1 if its norm  is a prime number, i.e., $N(\fp)=p$.

\item  We say that $K$ satisfies Generalized Riemann Hypothesis (GRH) if
 for any complex value $\rho$ in the critical strip $0 < \Re (\rho) < 1$ and is a zero of the Dedekind zeta function $\zeta_K(s)$ of $K$, 
  we have $\Re(\rho) = \frac{1}{2}$. 

\item We write $L/K$ to represent a finite extension of number fields.
 We write $\disc(L/K)\unlhd \mathcal{O}_{K}$ to denote the relative discriminant ideal of $L/K$.
 If $L/K$ is Galois, we use  $\Gal(L/K)$ to denote its Galois group. For each normal subgroup $N$ of $\Gal(L/K)$, we write $L^N$ to denote the fixed field of $L$ by $N$. By Galois theory, $L^N/K$ is a Galois extension with 
 $
 \Gal(L^N/K)\simeq \Gal(L/K)/N.
 $ 
We say the Artin's Holomorphic Conjecture (AHC) holds for the Galois extension $L/K$ if all the Artin L-functions attached to
nontrivial irreducible characters of $\Gal(L/K)$ are holomorphic on the complex plane.

\item Let $B$ be an abelian variety defined over $\F_q$. We say $B$ splits  (or $B$ is not simple)  if $B$ is isogenous over $\F_q$ to the product of smaller dimensional  abelian subvarieties defined over $\F_q$. We say  $B$ splits over $\overline{\F}_q$ (or $B$ is not absolutely simple) if  $B$ is isogenous over $\overline{\F}_q$ to a product of smaller dimensional abelian subvarieties.
\end{itemize}

\medskip
\noindent
{\bf{Acknowledgments.}} I want to thank Alina Carmen Cojocaru for introducing the Murty-Patankar Conjecture to me. I am thankful to  Manjul Bhargava for hosting me at Princeton University where I  carried out part of this work and to Yunqing Tang for the fruitful discussions while I was there. Furthermore, I want to acknowledge the valuable suggestions from Auden Hinz, Jacob Mayle, Pieter Moree, Ananth Shankar, Tonghai Yang, and Don Zagier. I wish to thank the Max-Planck-Institut f\"{u}r Mathematik in Bonn for its support and  inspiring atmosphere.
\bigskip

\section{Effective Chebotarev Density Theorem}
Let $L/K$ be a Galois extension of number fields with Galois group $G$. 
Let  $\mathcal{C}$ be a nonempty set in $G$ that is invariant under conjugation. For each unramified prime $\mathfrak{p}\in \Sigma_K$, we write  $\ds\left(\frac{L/K}{\mathfrak{p}} \right)\subseteq G$ for  the Artin symbol of $L/K$ at $\mathfrak{p}$. We set
\[
P(L/K) \coloneqq \{p \text{ rational prime}: \exists  \ \mathfrak{p}\in \Sigma_K, \text{ such that } \mathfrak{p}\mid p, \text{ and } \mathfrak{p}\mid \disc(L/K)\},
\]
\[
M(L/K) \coloneqq [L: K]d_K^{\frac{1}{n_K}}\prod_{p\in P(L/K)}p.
\]
The well-known Chebotarev Density Theorem states that for all sufficiently large real number $x$, 
\[
\pi_{\cal{C}}(x, L/K) \coloneqq \#\{\mathfrak{p}\in \Sigma_K:  \mathfrak{p} \text{ unramified, } N(\mathfrak{p})\leq x, \left(\frac{L/K}{\mathfrak{p}} \right)\subseteq \cal{C}\}\sim \frac{|\mathcal{C}|}{|G|}\Li(x).
\]
An effective version of the theorem with an explicit error term was proved by Lagarias and Odlyzko \cite{LaOd77} in the 1970s, which was then reformulated by Serre in the 1980s \cite{Se81}. Later on, Murty-Murty-Saradha  \cite{MuMuSa88} derived a better error term under GRH and  AHC. Afterwards, under the extra Pair Correlation Conjecture  for the Artin L-functions attached to irreducible characters of $G$ (see Section 3 of \cite{MuMuWo18}), M.R.  Murty and  V.K. Murty obtained a better error term of  $\pi_{\cal{C}}(x, L/K)$  which was refined by Murty-Murty-Wong \cite{MuMuWo18}. The following theorem summarizes the aforementioned results.

\begin{theorem}\label{effective-CDT}
Let $L/K$ be a finite Galois extension of number fields with Galois group $G$ and let $\mathcal{C}$ be a subset of $G$ that is  invariant under conjugation.  We have that for all sufficiently large real number $x$, 
\begin{enumerate}
\item[(i)] assuming GRH for the Dedekind zeta function of $L$, 
\[
\pi_{\cal{C}}(x, L/K) =\frac{|\mathcal{C}|}{|G|}\Li(x)+\O\left(\frac{|\mathcal{C}|}{|G|}x^{\frac{1}{2}}\left(\log |d_L|+n_L\log x\right)\right);
\]  \label{GRH}
\item[(ii)] assuming GRH for the Dedekind zeta function of $L$ and AHC for Artin L-functions associated to $L/K$,
\[
\pi_{\cal{C}}(x, L/K) =\frac{|\mathcal{C}|}{|G|}\Li(x)+\O\left(|\mathcal{C}|^{\frac{1}{2}}x^{\frac{1}{2}}n_K\log \left(M(L/K)x\right)\right);
\]  \label{GRH-AHC}
\item[(iii)] assuming GRH for the Dedekind zeta function of $L$, AHC and PCC for Artin L-functions associated to $L/K$,
\[
\pi_{\cal{C}}(x, L/K) =\frac{|\mathcal{C}|}{|G|}\Li(x)+\O\left(|\mathcal{C}|^{\frac{1}{2}}n_K^{\frac{1}{2}}\left(\frac{|G^{\#}|}{|G|} \right)^{\frac{1}{2}}x^{\frac{1}{2}}\log \left(M(L/K)x\right)\right),
\] 
where $G^{\#}$ denotes the set of conjugacy classes of $G$. \label{GRH-AHC-PCC}
\end{enumerate}
\end{theorem}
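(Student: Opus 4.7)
The plan is to translate the counting of primes into estimates on sums over zeros of Artin $L$-functions attached to irreducible characters of $G$. Using the class-function decomposition
\[
\mathbf{1}_{\mathcal{C}} \;=\; \frac{|\mathcal{C}|}{|G|} \sum_{\chi} \overline{\chi(\mathcal{C})}\,\chi,
\]
where $\chi$ ranges over irreducible characters of $G$, one rewrites (up to a negligible contribution from ramified primes and higher prime powers)
\[
\pi_{\mathcal{C}}(x, L/K) \;=\; \frac{|\mathcal{C}|}{|G|}\,\pi(x) \;+\; \frac{|\mathcal{C}|}{|G|}\sum_{\chi \neq \mathbf{1}} \overline{\chi(\mathcal{C})}\,\psi_{\chi}(x),
\]
with $\psi_{\chi}(x) = \sum_{N(\fp)\leq x} \chi(\mathrm{Frob}_{\fp})$. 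Each $\psi_{\chi}(x)$ is then controlled via the explicit formula for $L(s,\chi, L/K)$, so the task reduces to bounding the sum over nontrivial zeros under each hypothesis.

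For part (i), without AHC the functions $L(s,\chi, L/K)$ are not yet known to be entire, so I would invoke Brauer's induction theorem to express $\chi$ as an integer combination of characters induced from one-dimensional characters $\psi_i$ of subgroups $H_i \leq G$. By Artin inductivity, $L(s,\chi, L/K)$ is then a product/quotient of Hecke $L$-functions of intermediate fields, which are entire (away from possible poles at $s=1$) and whose nontrivial zeros are zeros of $\zeta_L$; GRH for $\zeta_L$ therefore places every contributing zero on $\Re(s)=1/2$. Applying a truncated explicit formula together with the Riemann--von Mangoldt zero-counting estimate, which via the functional equation and the conductor-discriminant formula introduces a factor $\log |d_L| + n_L\log x$, yields the bound in (i) after summation over $\chi$.

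For part (ii), AHC bypasses Brauer induction: one applies the explicit formula directly to each $L(s,\chi, L/K)$, with archimedean contribution controlled by $n_K\chi(1)$ rather than $n_L$ and analytic conductor controlled in terms of $M(L/K)$. Cauchy--Schwarz applied to the sum $\sum_{\chi \neq \mathbf{1}} \overline{\chi(\mathcal{C})}\,\psi_\chi(x)$, combined with the column-orthogonality relation $\sum_\chi |\chi(\mathcal{C})|^2 = |G|/|\mathcal{C}|$ and the conductor-discriminant identity $\log|d_L| = \sum_{\chi}\chi(1)\log\mathrm{cond}(\chi)$, collapses the sum over characters into the $|\mathcal{C}|^{1/2}$ prefactor appearing in the stated error. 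For part (iii), PCC provides a finer mean-square estimate on the low-lying zeros of $L(s,\chi, L/K)$ that beats Riemann--von Mangoldt; coupling this with a Plancherel-type sum over irreducible characters (equivalently, over $G^{\#}$) refines the previous estimate by an extra factor of $(|G^{\#}|/|G|)^{1/2}$ and produces the bound in (iii).

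The principal technical obstacle is threading the conductor and archimedean data uniformly through all three regimes: one must leverage $\sum_\chi \chi(1)^2 = |G|$, the conductor-discriminant formula, and a careful choice of truncation parameter $T$ in the explicit formula so that, even after Cauchy--Schwarz, no spurious factor of $|G|$ or $n_L$ leaks into the error. A secondary delicate point, essential for (iii), is that PCC must be available with sufficient uniformity in the conductor for the Plancherel saving to survive the passage from individual $L$-functions to the sum over $\chi$.
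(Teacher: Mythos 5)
First, a point of comparison: the paper does not actually prove Theorem~\ref{effective-CDT}; parts (i)--(iii) are quoted from Serre, Murty--Murty--Saradha, and Murty--Murty--Wong respectively, so you are reconstructing arguments the paper outsources. Your outline of (ii) and (iii) is faithful to how those results are obtained: under AHC one applies the explicit formula to each $L(s,\chi)$ directly, and the passage to the $|\mathcal{C}|^{\frac{1}{2}}$ (resp.\ $|\mathcal{C}|^{\frac{1}{2}}(|G^{\#}|/|G|)^{\frac{1}{2}}$) prefactor is exactly Cauchy--Schwarz against the orthogonality relations $\sum_\chi|\chi(g)|^2=|G|/|C|$ and $\sum_\chi\chi(1)^2=|G|$, with the conductor--discriminant formula absorbing the conductors and PCC supplying a mean-square zero estimate in place of the pointwise one. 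One caveat: the theorem allows $\mathcal{C}$ to be a union of conjugacy classes, for which $\sum_\chi|\chi(\mathcal{C})|^2=|G|/|\mathcal{C}|$ is false as written; the argument must be run class by class and recombined, which is also where the factor $|G^{\#}|$ in (iii) ultimately enters.

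The genuine gap is in your route to (i). Brauer induction writes $L(s,\chi)=\prod_i L(s,\psi_i)^{n_i}$ with $n_i\in\Z$ of \emph{both signs} and with $\sum_i|n_i|$ not controlled by any of the invariants $\log|d_L|$, $n_L$, $\chi(1)$ occurring in the target bound. GRH for $\zeta_L$ does force every zero and pole of $L(s,\chi)$ onto the critical line, but the explicit formula you would write down involves $\sum_i n_i\sum_{\rho_i}x^{\rho_i}/\rho_i$, and bounding this in absolute value requires the gross count $\sum_i|n_i|N_{\psi_i}(T)$, which the Riemann--von Mangoldt formula for $\zeta_L$ (or for the net zero-minus-pole count of $L(s,\chi)$) does not majorize. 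The proof behind the cited statement (Lagarias--Odlyzko, as reformulated by Serre) avoids this entirely via Deuring's reduction: replace $G$ by the cyclic subgroup $H=\langle g\rangle$ for $g\in\mathcal{C}$ and $K$ by $E=L^{H}$, so that only the $|H|$ Hecke $L$-functions of $E$ attached to characters of $H$ occur, each with exponent $+1$ in the factorization $\zeta_L(s)=\prod_{\psi}L(s,\psi,L/E)$; their combined zero count is then at most that of $\zeta_L$, namely $\O(T(\log|d_L|+n_L\log T))$, which is precisely where the error term $\frac{|\mathcal{C}|}{|G|}x^{\frac{1}{2}}(\log|d_L|+n_L\log x)$ comes from. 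You should replace the Brauer step in (i) by this cyclic reduction (or else you are implicitly assuming AHC, defeating the point of having part (i) separate from part (ii)).
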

\begin{proof}
Statement (i) is stated in \cite[Th\'eor\`eme~4, p.~133]{Se81},  (ii) is a reformulation of \cite[Corollary 3.7, p. 265]{MuMuSa88}, and (iii) is stated in \cite[Theorem 1.2, p. 402]{MuMuWo18}. 
\end{proof}
 
We also record the following lemma due to Hensel on the upper bound of the discriminant of a finite number field extension. 

\begin{lemma}\label{hensel}
Under the same setting and notation as above, we have
\[
\log \left|N (\disc (L/K))\right|
\leq
(n_L - n_K)
\left(
\ds\sum_{p \in {P}(L/K)} \log p
\right)
+
n_L \log [L : K]
\]
\end{lemma}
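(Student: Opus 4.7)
The plan is to reduce the estimate to the classical Hensel bound on the different exponent, applied prime-by-prime, and then to sum the local contributions carefully so that the ``wild'' piece is absorbed into the single global term $[L:\Q]\log[L:K]$ rather than being multiplied by $\sum_p \log p$.

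First, I would expand
\[
\log\bigl|N_{K/\Q}(\disc(L/K))\bigr|
=\sum_{\mathfrak p\subseteq \mathcal O_K} v_{\mathfrak p}(\disc(L/K))\,\log N(\mathfrak p),
\]
and regroup the primes $\mathfrak p$ by the rational prime $p$ lying below them, writing $\log N(\mathfrak p)=f_{\mathfrak p|p}\log p$. Since $v_{\mathfrak p}(\disc(L/K))=0$ unless $\mathfrak p$ divides $\disc(L/K)$, only primes $p\in P(L/K)$ contribute. Writing $\mathfrak P$ for primes of $L$ above $\mathfrak p$, and letting $d(\mathfrak P|\mathfrak p)$ denote the exponent of the different of $L/K$ at $\mathfrak P$, the formula $v_{\mathfrak p}(\disc(L/K))=\sum_{\mathfrak P|\mathfrak p}f_{\mathfrak P|\mathfrak p}\,d(\mathfrak P|\mathfrak p)$ plus the multiplicativity $f_{\mathfrak P|p}=f_{\mathfrak P|\mathfrak p}f_{\mathfrak p|p}$ lets me rewrite the inner sum as $\sum_{\mathfrak P|p}f_{\mathfrak P|p}\,d(\mathfrak P|\mathfrak p)$.

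Next I would invoke Hensel's classical upper bound
\[
d(\mathfrak P|\mathfrak p)\le e_{\mathfrak P|\mathfrak p}-1+v_{\mathfrak P}\!\bigl(e_{\mathfrak P|\mathfrak p}\bigr),
\]
and split the estimate into a tame and a wild piece. For the tame piece, I use $e_{\mathfrak P|\mathfrak p}\le e_{\mathfrak P|p}$ together with the fundamental identities $\sum_{\mathfrak P|\mathfrak p}e_{\mathfrak P|\mathfrak p}f_{\mathfrak P|\mathfrak p}=[L:K]$ and $\sum_{\mathfrak p|p}e_{\mathfrak p|p}f_{\mathfrak p|p}=[K:\Q]$. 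A short telescoping computation,
\[
\sum_{\mathfrak P|p}f_{\mathfrak P|p}(e_{\mathfrak P|\mathfrak p}-1)
=\sum_{\mathfrak p|p}f_{\mathfrak p|p}\Bigl([L:K]-\sum_{\mathfrak P|\mathfrak p}f_{\mathfrak P|\mathfrak p}\Bigr)
\le ([L:K]-1)[K:\Q]=[L:\Q]-[K:\Q],
\]
gives the tame bound uniformly in $p$, so summing $\log p$ over $p\in P(L/K)$ produces the first term on the right-hand side of the lemma.

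Finally, for the wild piece, the essential trick is to keep the factor $\log p$ attached. Using $v_{\mathfrak P}(e_{\mathfrak P|\mathfrak p})=e_{\mathfrak P|p}\,v_p(e_{\mathfrak P|\mathfrak p})$ and the elementary inequality $v_p(n)\log p\le\log n$, I get
\[
v_{\mathfrak P}(e_{\mathfrak P|\mathfrak p})\,\log p
\le e_{\mathfrak P|p}\log e_{\mathfrak P|\mathfrak p}
\le e_{\mathfrak P|p}\log[L:K].
\]
Summing over all primes $\mathfrak P$ of $L$ (equivalently, summing over $p$ and over $\mathfrak P|p$) and invoking $\sum_{\mathfrak P|p}e_{\mathfrak P|p}f_{\mathfrak P|p}=[L:\Q]$ yields a total wild contribution of at most $[L:\Q]\log[L:K]$, independent of the size of $P(L/K)$. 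Combining the two estimates gives the stated inequality. The main obstacle, and the only delicate point, is precisely this last step: a naive application of Hensel's bound at each prime and then summing $\log p$ separately would multiply the wild term by $\sum_{p\in P(L/K)}\log p$, which is much too large; sharpness requires pairing $v_p$ with $\log p$ before summing, so that the global fundamental identity on $L/\Q$ can be used once.
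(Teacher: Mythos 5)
The paper offers no proof of this lemma at all---it simply cites [Se81, Proposition~5]---so your argument stands or falls on its own. The reduction to local different exponents, the regrouping by rational primes, and the tame estimate $\sum_{\mathfrak P\mid p}f_{\mathfrak P\mid p}(e_{\mathfrak P\mid\mathfrak p}-1)\le[L:\Q]-[K:\Q]$ are all correct. The gap is in the wild part. Your chain of inequalities
\[
v_{\mathfrak P}(e_{\mathfrak P\mid\mathfrak p})\log p\le e_{\mathfrak P\mid p}\log e_{\mathfrak P\mid\mathfrak p}\le e_{\mathfrak P\mid p}\log[L:K]
\]
is fine for each individual $\mathfrak P$, but the identity $\sum_{\mathfrak P\mid p}e_{\mathfrak P\mid p}f_{\mathfrak P\mid p}=[L:\Q]$ that you then invoke holds \emph{for each rational prime $p$ separately}; it is not a statement about the sum over all primes of $L$. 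Summing your bound over $p$ therefore yields a wild contribution of at most $[L:\Q]\log[L:K]$ \emph{per wildly ramified rational prime}, i.e.\ a total of order $\#\{p\ \text{wild}\}\cdot[L:\Q]\log[L:K]$, which is exactly the kind of loss you announce you are avoiding. The danger is not hypothetical: Hensel's upper bound for the different exponent is attained simultaneously at $2$ and at $3$ by, say, $\Q_2(2^{1/4})$ and $\Q_3(3^{1/3})$, so the per-$p$ wild bound can be essentially saturated at several primes at once.

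The repair uses the hypothesis, in force throughout the paper, that $L/K$ is Galois. Then $e_{\mathfrak P\mid\mathfrak p}f_{\mathfrak P\mid\mathfrak p}g_{\mathfrak p}=[L:K]$, so $e_{\mathfrak P\mid\mathfrak p}$ divides $[L:K]$ and hence $v_p(e_{\mathfrak P\mid\mathfrak p})\le v_p([L:K])$, a bound uniform over all $\mathfrak P\mid p$. The wild contribution at $p$ is then at most $[L:\Q]\,v_p([L:K])\log p$, and the sum over $p$ collapses globally:
\[
\sum_p [L:\Q]\,v_p([L:K])\log p=[L:\Q]\log\prod_p p^{v_p([L:K])}\le[L:\Q]\log[L:K],
\]
since $\prod_p p^{v_p([L:K])}$ divides $[L:K]$. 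This divisibility is the point at which the wild term is counted once globally rather than once per ramified prime; without $e_{\mathfrak P\mid\mathfrak p}\mid[L:K]$ (which can fail for non-Galois $L/K$, e.g.\ a ramification index $4$ in a degree-$6$ extension) your estimate does not close.
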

\begin{proof}
See \cite[Proposition 5, p.~129]{Se81}.
\end{proof}

\bigskip

\section{Counting subsets of finite general symplectic groups}
The main goal of this section is to introduce some distinguished sets related to finite general symplectic groups and count their cardinalities. The results will be used while applying the effective Chebotarev Density Theorem. 

\subsection{Background}
 For now, fix an arbitrary integer $g\geq 1$. Later, we will specialize to the case where $g=2$.  
 Let $R$ be a nonzero unitary commutative ring. We write
$
J \coloneqq \left(
\begin{matrix} 
0 & I_{g}\\
-I_g & 0
\end{matrix}\right)
$
and the general symplectic group
\[
\GSp_{2g}(R) \coloneqq \{M\in \GL_{2g}(R): \exists \mu(M)\in R^{\times}, \text{ such that } M^{t}JM=\mu(M) J\}.
\]
For each $M\in \GSp_{2g}(R)$, the element $\mu(M)\in R^{\times}$ is called the multiplier of $M$. Recall that taking multipliers in $\GSp_{2g}(R)$ gives rise to a multiplicative character.
Fix a positive integer $m$ and an element $\gamma\in (\Z/m\Z)^{\times}$, we write
\begin{align*}
G(m) & \coloneqq \GSp_{2g}(\Z/m\Z), \\ 
G^{\gamma}(m)& \coloneqq \{M\in G(m): \mu(M)=\gamma\}, \\
\Lambda(m) & \coloneqq\{\lambda I_{2g}: \lambda \in (\Z/m\Z)^{\times}\},\\
P(m) & \coloneqq G(m)/\Lambda(m),\\
T(m) & \coloneqq \{\diag(\lambda_1, \ldots, \lambda_g, \lambda'_1, \ldots, \lambda'_g): \prod_{1\leq i\leq g}\lambda_i = \prod_{1\leq i\leq g}\lambda_i', \ \lambda_i, \lambda_i'\in (\Z/m\Z)^{\times} \  \forall 1\leq i\leq g\}, \\
\G(m) &\coloneqq\{(M_1, M_2)\in \GL_g(\Z/m\Z)\times \GL_g(\Z/m\Z): \det M_1=\det M_2\},\\
\P(m) & \coloneqq \G(m)/\Lambda(m),\\
\T(m) & \coloneqq T(m)/\Lambda(m).
\end{align*}
We write $G(m)^{\#}$, $P(m)^{\#}$, $\G(m)^{\#}$ and $\P(m)^{\#}$  to denote the set of conjugacy classes of the groups $G(m)$, $P(m)$, $\G(m)$ and $\P(m)$, respectively. 

Recall that for  $M\in G^{\gamma}(m)$, the characteristic polynomial of $M$ is a reciprocal polynomial of the form 
\[
\car_M(X) =X^{2g}+a_1X^{2g-1}+\ldots +a_gX^g +\ldots +\gamma^{g-1}a_1X+\gamma^g \in (\Z/m\Z)[X].
\]
For an $N=(N_1, N_2)\in \G(m)$, the characteristic polynomial of $N$ is a reducible polynomial of the form
\[
\car_N(X)= (X^{g}+b_{1}X^{g-1}+\ldots +d)(X^{g}+c_{1}X^{g-1}+\ldots +d) \in (\Z/m\Z)[X],
\]
where  $d = \det N_1 =\det N_2 \in (\Z/m\Z)^{\times}$.

Conversely, given a reciprocal polynomial $f(X)\in (\Z/m\Z)[X]$, the following lemmas provide essential properties of $f(X)$ that will be employed in the proofs of Proposition \ref{countings} and Proposition \ref{countings-2} presented at the end of this section. To properly state these lemmas, we introduce the notation $f(m)\ll g(m)$ for any integer $m$  to indicate that  for arithmetic functions $f(n), g(n): \N \to \C$, there exists a constant $C$ independent of $m$ such that $f(m)\leq C g(m)$. Likewise,  we use the notation $f(m)\asymp g(m)$ and $f(m)=\O(g(m))$  for any integer $m$.



\begin{lemma}\label{matrix-poly-lem}
 Let $m$ be any positive squarefree  integer such that $2, 3\nmid m$.   For $\gamma\in (\Z/m\Z)^{\times}$, we set
 \[
 f(X)=X^{2g}+a_1X^{2g-1}+\ldots+a_gX^g+\ldots + \gamma^{g-1}a_1X+\gamma^g\in (\Z/m\Z)[X].
 \]
 Then, 
  \begin{equation}\label{matrix-poly-eq}
 \#\{M\in G^{\gamma}(m): \car_M(X)=f(X)\}\asymp m^{2g^2}.
\end{equation}
Furthermore,  if $m=\ell$ is a prime number, then
 \begin{equation}\label{matrix-poly-eq-prime}
 \#\{M\in G^{\gamma}(\ell): \car_M(X)=f(X)\}=\ell^{2g^2}+\O(\ell^{2g^2-1}).
\end{equation}
\end{lemma}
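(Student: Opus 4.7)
The plan is to reduce \eqref{matrix-poly-eq} to the prime case \eqref{matrix-poly-eq-prime} via the Chinese Remainder Theorem, then to analyze the prime case as an algebraic-geometric question about the characteristic polynomial morphism. Since $m$ is square-free, $\Z/m\Z \simeq \prod_{\ell \mid m} \F_\ell$ induces $G^\gamma(m) \simeq \prod_{\ell \mid m} G^{\gamma_\ell}(\ell)$, with $\gamma_\ell$ the reduction of $\gamma$ modulo $\ell$, and the characteristic polynomial respects this decomposition componentwise. Hence
\[
\#\{M \in G^\gamma(m) : \car_M = f\} \;=\; \prod_{\ell \mid m} \#\{M_\ell \in G^{\gamma_\ell}(\ell) : \car_{M_\ell} = f_\ell\},
\]
and the hypothesis $2, 3 \nmid m$ guarantees that every prime factor $\ell \geq 5$, so the implied constants coming from \eqref{matrix-poly-eq-prime} multiply over $\ell \mid m$ to an absolute constant, yielding \eqref{matrix-poly-eq}.

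For \eqref{matrix-poly-eq-prime}, I would view $\GSp_{2g}^\gamma$ as an algebraic variety over $\F_\ell$, namely a coset of $\Sp_{2g}$ of dimension $2g^2 + g$, with $|G^\gamma(\ell)| = |\Sp_{2g}(\F_\ell)| = \ell^{g^2} \prod_{i=1}^g (\ell^{2i}-1) = \ell^{2g^2+g}\bigl(1 + O(\ell^{-2})\bigr)$. The characteristic polynomial morphism
\[
\chi : \GSp_{2g}^\gamma \longrightarrow \A^g, \qquad M \longmapsto (a_1, \ldots, a_g),
\]
is surjective (witnessed by the symplectic companion matrix of any reciprocal polynomial of the prescribed form). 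On dimensional grounds every fiber $V_f := \chi^{-1}(f)$ should have dimension $2g^2$ and hence roughly $\ell^{2g^2}$ rational points.

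To turn this heuristic into the sharp count, the crucial input is that when $f$ lies in the regular semisimple locus, i.e., the roots of $f$ are distinct and pair as $\{\alpha_i, \gamma/\alpha_i\}_{i=1}^g$, the fiber $V_f$ is a single $\GSp_{2g}$-conjugacy class whose stabilizer is a maximal torus $T$ of rank $g+1$. One then has $|V_f(\F_\ell)| = |\GSp_{2g}(\F_\ell)|/|T(\F_\ell)| = \ell^{2g^2} + O(\ell^{2g^2-1})$, uniformly in the $\F_\ell$-form of $T$. For non-regular $f$, $V_f$ decomposes into several strata corresponding to different elementary divisor patterns of $M$, and I would enumerate these via the symplectic rational canonical form (Wall's classification of $\GSp_{2g}$-conjugacy classes, valid in characteristic $> 3$, which the hypothesis $2,3 \nmid m$ secures), verifying that the principal stratum is always of dimension $2g^2$ and bounding the subordinate strata as $O(\ell^{2g^2-1})$.

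The main obstacle will be this non-regular analysis carried out uniformly in $f$: when $f$ has repeated or degenerate root pairings (as already hinted at by the discriminant dichotomy in Lemma \ref{quad-symb} for $g = 2$), the centralizer structure changes and $V_f$ can acquire multiple components. Ruling out a component of dimension $> 2g^2$ in any fiber---which would destroy both the upper bound $\asymp \ell^{2g^2}$ and the sharp error term---requires the flatness of $\chi$, which I would establish from the fact that $\GSp_{2g}^\gamma$ is smooth (in particular Cohen--Macaulay) and $\chi$ has equidimensional fibers of the expected dimension $2g^2$ on the dense regular locus. With flatness in hand, every fiber has pure dimension $2g^2$, and a stratified Lang--Weil count on each irreducible component delivers \eqref{matrix-poly-eq-prime}.
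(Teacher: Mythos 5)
Your Chinese Remainder Theorem reduction of the square-free case to the prime case is exactly the paper's argument. The divergence is in the prime case itself: the paper does not prove (\ref{matrix-poly-eq-prime}) at all, but quotes it directly from Chavdarov \cite[Theorem~3.5]{Ch97}, whereas you attempt to rederive it via the fibers of the characteristic-polynomial morphism $\chi:\GSp_{2g}^{\gamma}\to\A^{g}$. That is morally the right picture (and close in spirit to how the cited result is actually proved), but as written your sketch has genuine gaps rather than being a complete alternative proof.

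Concretely: (1) Your flatness step is circular. Miracle flatness requires that \emph{every} fiber of $\chi$ have the expected dimension $2g^{2}$; knowing this only on the dense regular semisimple locus does not suffice, yet you invoke flatness precisely to conclude that no special fiber has a component of dimension $>2g^{2}$. Ruling out such components is the substantive point (it is essentially the Steinberg--Kostant flatness of the adjoint quotient, adapted to $\GSp_{2g}$), and it needs an actual argument, e.g.\ exhibiting a regular element in every fiber whose orbit is dense. (2) Even granting that every fiber is pure of dimension $2g^{2}$, a ``stratified Lang--Weil count'' yields $c_{f}\,\ell^{2g^{2}}+\O(\ell^{2g^{2}-\frac12})$, where $c_{f}$ is the number of top-dimensional geometrically irreducible components defined over $\F_{\ell}$. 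The lemma needs $c_{f}=1$ for \emph{every} $f$, including the non-regular ones; you establish this only in the regular semisimple case (via the connected torus centralizer and Lang's theorem), and merely promise the rest. (3) Lang--Weil gives an error $\O(\ell^{2g^{2}-\frac12})$, which is weaker than the claimed $\O(\ell^{2g^{2}-1})$; to reach the stated error term you must replace Lang--Weil on the non-regular fibers by the exact orbit count coming from Wall's classification, which you name but do not carry out. So either cite \cite{Ch97} as the paper does, or commit to the full fiber analysis; the present hybrid does not yet close.
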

\begin{proof}
If $m=\ell$ is a prime number such that  $\ell\geq 5$, then the result  is already proved in \cite[Theorem 3.5, p. 159]{Ch97}. 

If $m$ is a squarefree positive integer, we write its prime decomposition as
$m= \prod_{1\leq i\leq k} \ell_i$.  By the Chinese Remainder Theorem, 
 we have an isomorphism  of groups 
$G(m)\simeq \prod_{1\leq i\leq k}G(\ell_i)$. For $M\in G(m)$ and  a fixed $1\leq i\leq k$, we write $M_i\in G(\ell_i)$ to denote the image of $M$ under the projection from $G(m)$ to $G(\ell_i)$. Then $M_i \equiv M (\mod \ell_i)$ and
$
 \car_{M_i}(X)\equiv \car_M(X) (\mod \ell_i).
$

Let $\gamma_i\in (\Z/\ell_i\Z)^{\times}$ be the unique element such that $\gamma\equiv \gamma_i(\mod \ell_i)$ and $\mu(M_i)=\gamma_i$ for all $1\leq i\leq k$.
Based on the observations above,  we obtain
\begin{align*}
\#\{M\in G^{\gamma}(m): \car_M(X)=f(X)\} & =\prod_{1\leq i\leq k}\#\{M_i\in G(\ell_i)^{\gamma_i}:  \car_{M_i}(X) \equiv f(X)(\mod \ell_i)\}\\
& = \prod_{1\leq i\leq k}(\ell_i^{2g^2}+\O(\ell_i^{2g^2-1}))\asymp 
m^{2g^2}.
\end{align*}
\end{proof}
\begin{lemma}\label{matrix-poly-lem-RM}
 Let $\ell\geq 5$ be a prime.  Then for any monic polynomial 
 $
 f(X) \in \F_\ell[X]
 $
 of degree $g$, 
  we have
  \begin{equation}\label{matrix-poly-eq-RM}
 \#\{N\in \GL_{g}(\Z/\ell \Z): \car_N(X)=f(X)\}= \ell^{g^2-g}+\O(\ell^{g^2-g-1}).
\end{equation}
\end{lemma}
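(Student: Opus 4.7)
The plan is to compute the fiber size of the characteristic polynomial map $\car \colon \GL_g(\F_\ell) \to \F_\ell[X]$ over $f$ by partitioning the fiber into $\GL_g(\F_\ell)$-conjugacy classes and applying orbit-stabilizer. Since $\det N = (-1)^g f(0)$, invertibility is automatic whenever $f(0)\neq 0$, so I would dispatch the degenerate case $f(0)=0$ (in which the count is trivially $0$ and the asserted asymptotic still holds) at the outset and focus on the remaining case.

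First I would factor $f = \prod_{i=1}^{k} f_i^{e_i}$ into distinct monic irreducibles over $\F_\ell$, with $\deg f_i = d_i$ and $\sum_i e_i d_i = g$. By rational canonical form, the conjugacy classes in $\GL_g(\F_\ell)$ with characteristic polynomial $f$ are parametrized by a tuple of partitions of the $e_i$; when $f$ is separable (all $e_i=1$), there is exactly one such class, namely the regular semisimple one, whose centralizer is isomorphic to $\prod_i \F_{\ell^{d_i}}^\times$ and thus has order $\prod_i(\ell^{d_i}-1) = \ell^{g} + \O(\ell^{g-1})$. Combined with $|\GL_g(\F_\ell)| = \ell^{g^2}\prod_{j=1}^{g}(1-\ell^{-j}) = \ell^{g^2} + \O(\ell^{g^2-1})$, orbit-stabilizer yields an orbit of size $\ell^{g^2-g} + \O(\ell^{g^2-g-1})$, which I expect to be the main term of the claimed bound.

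Second, for non-separable $f$ each additional conjugacy class corresponds to a nontrivial Jordan partition and has a centralizer strictly larger than the regular semisimple torus: the repeated block structure forces the centralizer to contain a unipotent subgroup of order at least $\ell$. Consequently each such orbit has cardinality at most $\O(\ell^{g^2-g-1})$, and since the number of classes is bounded purely in terms of $g$, the total contribution of the non-separable locus is absorbed into the error term. The main obstacle I anticipate is establishing the uniform centralizer lower bound across all partition types of the $e_i$; this is a finite bookkeeping exercise parallel to, but strictly simpler than, the one carried out for $\GSp_{2g}$ in \cite[Theorem 3.5]{Ch97}, since there is no symplectic pairing to track. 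Alternatively, I would bypass the Jordan enumeration entirely by invoking flatness of the Chevalley--Kostant adjoint quotient $M_{g\times g}\to \A^g$, which forces every geometric fiber to have dimension exactly $g^2-g$ and hence, by Lang--Weil, to contain $\ell^{g^2-g} + \O(\ell^{g^2-g-1/2})$ points over $\F_\ell$ uniformly in $f$; intersecting with the open locus $\GL_g$ merely removes the subfiber $f(0)=0$, already treated separately.
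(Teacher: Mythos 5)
The paper's own ``proof'' is a one-line citation to \cite[Theorem 3.9]{Ch97}, so an actual argument is welcome, and your framework --- partition the fiber of the characteristic-polynomial map into conjugacy classes via rational canonical form and apply orbit--stabilizer --- is the right one. The separable case is handled correctly. But your treatment of non-separable $f$ contains a genuine error. When some $e_i\geq 2$, the main term $\ell^{g^2-g}$ is carried by the \emph{regular} class (elementary divisors exactly $f_1^{e_1},\dots,f_k^{e_k}$), whose centralizer is $(\F_\ell[X]/(f))^{\times}$, of order $\ell^{g}\prod_i(1-\ell^{-d_i})=\ell^{g}+\O(\ell^{g-1})$ --- only a factor $1+\O(1/\ell)$ larger than the regular semisimple torus, not a factor of $\ell$. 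Your inference ``the centralizer contains a unipotent subgroup of order at least $\ell$, hence the orbit is $\O(\ell^{g^2-g-1})$'' is a non sequitur: the unipotent part of order $\ell^{d_i(e_i-1)}$ is exactly offset by the shrinking of the toral part. Concretely, for $g=2$ and $f=(X-1)^2$ the fiber is the identity matrix together with the regular unipotent class, whose centralizer has order $\ell(\ell-1)$ and whose orbit has size $\ell^2-1$; the total is exactly $\ell^2=\ell^{g^2-g}$, not $\O(\ell)$. So your conclusion that ``the total contribution of the non-separable locus is absorbed into the error term'' is false, and your argument fails to produce the main term precisely for the non-separable $f$, which the lemma asserts for every $f$ and which the paper actually needs in Proposition 3.7(ii) for the pairs with $t^2=4d$. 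The repair stays inside your framework: for every $f$ with $f(0)\neq 0$ there is exactly one regular class, with centralizer $(\F_\ell[X]/(f))^{\times}$ of order $\ell^{g}(1+\O(1/\ell))$, giving the main term; every non-regular class has centralizer of $\F_\ell$-dimension strictly greater than $g$, hence orbit size $\O(\ell^{g^2-g-1})$, and the number of classes is $\O_g(1)$.

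Two smaller points. Your Lang--Weil alternative is sound in outline (each fiber is the closure of the regular orbit of the connected group $\GL_g$, hence geometrically irreducible of dimension $g^2-g$), but it only yields the error term $\O(\ell^{g^2-g-1/2})$, which is weaker than the stated $\O(\ell^{g^2-g-1})$ --- adequate for the paper's applications, but not a proof of the lemma as written. And when $f(0)=0$ the count is $0$, which does \emph{not} satisfy the asserted asymptotic; the lemma implicitly requires $a_2\neq 0$ (as in its application, where $a_2=d\in(\Z/\ell\Z)^{\times}$), so you should state that hypothesis rather than claim the formula survives in the degenerate case.
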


\begin{proof}
This is a direct result  from \cite[Theorem 3.9, p. 162]{Ch97}. 
\end{proof}


\subsection{Counting results in $\GSp_4$}
Let $m\geq 1$ be an arbitrary positive integer. We keep the notation from the earlier section but focus on $g=2$.

Recall that for any $M\in G(m)$, the characteristic polynomial of $M$ is a reciprocal polynomial of the form
\[
\car_M(X)=X^4+a_1X^3+a_2X^2+\gamma a_1X+\gamma^2 \in (\Z/m\Z)[X],
\]
where $\gamma=\mu(M)$. We define  the {\emph{discriminant}} of $M$ as
\begin{equation}\label{disc-matrix}
\Delta_M\coloneqq a_1^2-4 a_2+8\gamma \in \Z/m\Z.
\end{equation}

The following lemma shows when a reciprocal polynomial of degree 4 is reducible.
\begin{lemma}\label{quad-symb}
Let $\ell$ be an odd prime and $\gamma\in (\Z/\ell\Z)^{\times}$.
Let 
\[
f(X)=X^4+a_1X^3+a_2X^2+\gamma a_1X+\gamma^2\in (\Z/\ell\Z)[X].
\]
We denote by  $\Delta_f\coloneqq a_1^2-4a_2+8\gamma.$
Then, we have
\begin{enumerate}
\item[(i)] $f(X)=(X^2+\beta_1X+\mu)(X^2+\beta_2X+\mu)$ in $(\Z/\ell\Z)[X]$ for $\beta_1\neq \beta_2$ if and only if $\left(\frac{\Delta_f}{\ell} \right)=1$.
\item[(ii)] $f(X)=(X^2+\beta X+\mu)^2$ in  $(\Z/\ell\Z)[X]$ if and only if $\left(\frac{\Delta_f}{\ell} \right)=0$.
\end{enumerate}

\end{lemma}
\begin{proof}
\begin{enumerate}
\item[(i)]
If $\left(\frac{\Delta_f}{\ell} \right)=1$, then by definition, there is an element $u \in (\Z/\ell\Z)^{\times}$  such that 
$
u^2 =a_1^2-4a_2+8\gamma.
$
We denote by  $2^{-1}$ the inverse of $2$ in $(\Z/\ell\Z)^{\times}$. Then a direct computation shows that
\[
f(X) = (X^2+2^{-1}(a_1+u)X+\gamma)(X^2+2^{-1}(a_1-u)X+\gamma) \in (\Z/\ell\Z)[X].
\] 
The statement follows by noting that $\ds 2^{-1}(a_1+u)\neq 2^{-1}(a_1-u)$.

Conversely, if  $f(X)=(X^2+\beta_1X+\gamma)(X^2+\beta_2X+\gamma)$ for $\beta_1\neq \beta_2\in \Z/\ell\Z$, then a direct computation shows  that $\Delta_f=(\beta_1-\beta_2)^2\in (\Z/\ell\Z)^{\times}$. Therefore, by definition, we get $\left(\frac{\Delta_f}{\ell} \right)=1$.
\item[(ii)] 
If $\left(\frac{\Delta_f}{\ell} \right)=0$,  then
$
a_1^2-4a_2+8\gamma=0.
$
In this case, one can check the following  factorization holds:
\[
f(X) = (X^2+2^{-1}a_1X+\gamma)^2 \in (\Z/\ell\Z)[X].
\] 

Conversely, if  $f(X)=(X^2+\beta X+\gamma)^2$, then a direct computation shows $\Delta_f=0$.  
Therefore,  we get $\left(\frac{\Delta_f}{\ell} \right)=0$.
\end{enumerate}
\end{proof}


Fix an odd prime $\ell$, we introduce some subsets of  $G(\ell)$ that are  invariant under conjugation by elements of $G(\ell)$. 
We denote by 
\begin{align}
& \cal{C}^0(\ell) \coloneqq \{M\in G(\ell): \Delta_M=0\}. \numberthis\label{conjugacy-0}
\end{align}
For $ -2 \leq i\leq 6$, we write  
\begin{equation}\label{conjugacy-1}
\cal{C}_i^1(\ell) \coloneqq \{M \in G(\ell): \car_M(X)=X^4+a_{1}X^3+a_{2}X^2+\gamma a_1X+\gamma^2\in (\Z/\ell\Z)[X], a_2= i\gamma\},
\end{equation}
We also write
\begin{align} 
&\cal{C}^2(\ell) \coloneqq \{M \in G(\ell): \car_M(X)=X^4+a_{1}X^3+a_{2}X^2+\gamma a_1X+\gamma^2\in (\Z/\ell\Z)[X], a_1=0\}, \numberthis \label{conjugacy-2}\\
& \cal{C}^3(\ell)  \coloneqq \{M \in G(\ell): \car_M(X)=X^4+a_{1}X^3+a_{2}X^2+\gamma a_1X+\gamma^2\in(\Z/\ell\Z)[X], a_1^2=\gamma+a_2\}, \numberthis \label{conjugacy-3}\\
&\cal{C}^4(\ell) \coloneqq \{M \in G(\ell): \car_M(X)=X^4+a_{1}X^3+a_{2}X^2+\gamma a_1X+\gamma^2\in (\Z/\ell\Z)[X], a_1^2=2a_2\}, \numberthis \label{conjugacy-4}\\
& \cal{C}^5(\ell)  \coloneqq \{M \in G(\ell): \car_M(X)=X^4+a_{1}X^3+a_{2}X^2+\gamma a_1X+\gamma^2\in (\Z/\ell\Z)[X], a_1^2=3(a_2-\gamma)\}. \numberthis \label{conjugacy-5}
\end{align}

We also consider subsets of $P(\ell)$ that are invariant under conjugation. The construction is based on the following observation.
For any $\lambda\in (\Z/\ell\Z)^{\times}$ and $M\in G^{\gamma}(\ell)$, we have
\[
 \car_{\lambda M}(X)=X^4+\lambda a_{1}X^3+\lambda^2 a_{2}X^2+(\gamma\lambda^2) \lambda a_1X+(\gamma\lambda^2)^2.
\]
Therefore, by definition (\ref{disc-matrix}), we have
\begin{equation}\label{quadratic-P-G}
\left(\frac{\Delta_{\lambda M}}{\ell} \right) = \left(\frac{\lambda^2\Delta_M}{\ell} \right)=\left(\frac{\Delta_M}{\ell} \right).
\end{equation}
 For any  $g\in P(\ell)$, we denote its lifting in $G(\ell)$ as $\hat{g}$. Note that any two distinct liftings of $g$ differ by the multiplication of a scalar matrix in  $\Lambda(\ell)$. Thus based on the observation (\ref{quadratic-P-G}), the following sets are well-defined:
\begin{align}
\cal{C}_1(\ell) & \coloneqq \{g\in P(\ell): \left(\frac{\Delta_{\hat{g}}}{\ell} \right)=1\}, \label{conj-P-1}\\
\cal{C}_0(\ell) & \coloneqq \{g\in P(\ell): \left(\frac{\Delta_{\hat{g}}}{\ell} \right)=0\},\label{conj-P-0}\\
\cal{C}_{-1}(\ell) & \coloneqq \{g\in P(\ell): \left(\frac{\Delta_{\hat{g}}}{\ell} \right)=-1\},\label{conj-P--1}
\end{align}

Similarly, for any $N=(N_1, N_2)\in \G(m)$, where $N_1, N_2\in \GL_2(\Z/m\Z)$, recall that the characteristic polynomial of $N$ can be written as
\begin{align*}
\car_N(X) & =\car_{N_1}(X)\car_{N_2}(X)\\
 &=(X^2+b_1X+d)(X^2+c_1X+d) \in (\Z/m\Z)[X],  d=\det N_1=\det N_2 \in (\Z/m\Z)^{\times}. 
\end{align*}
If $m=\ell$ is a prime, we define the following subsets of  $\P(\ell)$ that are invariant under conjugation. First observe that for any $\lambda\in (\Z/\ell\Z)^{\times}$ and any $N=(N_1, N_2)\in \G(\ell)$,  
\begin{equation}\label{conjugacy-2}
\tr \lambda N_i=\lambda \tr N_i, \ \ \det \lambda N_i =\lambda^2\det N_i, \ 1\leq i\leq 2.
\end{equation}
Now given $(g_1, g_2)\in \P(\ell)$, we write its lifting in $\G(\ell)$ as $(\hat{g}_1, \hat{g}_2)$. Note that any two distinct liftings of $g$ and $(g_1, g_2)$ differ by the multiplication  of a scalar matrix in  $\Lambda(\ell)$. Thus based on the observation (\ref{conjugacy-2}), the following unions of conjugacy classes are well defined:
\begin{align}
\cal{C}_1^{RM}(\ell)& \coloneqq \{(g_1, g_2)\in \P(\ell): \tr \hat{g}_1=\tr \hat{g}_2\},\label{conj-P-1-RM}\\
\cal{C}^{CM}(\ell) &\coloneqq \{(g_1, g_2)\in \T(\ell): \tr \hat{g}_1=\tr \hat{g}_2\}.\label{conj-P-1-CM}
\end{align}

Now we are ready to state the two counting propositions. 
 
\begin{proposition}\label{countings}
Let $m$ be any positive squarefree integer such that $2, 3\nmid m$. Let $\ell\geq 5$ be any prime.
\begin{enumerate}
\item[(i)] 
We have
\begin{align*}
 & | G(m)|  \asymp m^{11},  \; |P(m)| \asymp m^{10},  \\
 & |G(\ell)| =\ell^{11}+\O(\ell^{10}), \; \; \; |P(\ell)|   =\ell^{10}+\O(\ell^8). 
\end{align*} 
 \label{G-P}
\item[(ii)]
There exists a constant  $C_{\ell}$ that is uniformly bounded (independent of $\ell$) such that 
\begin{align*}
|\cal{C}_1(\ell)|& =\frac{1}{2}\ell^{10}+C_{\ell}\ell^{9}+\O(\ell^8), \\
 |\cal{C}_0(\ell)| &=\ell^{9}+\O(\ell^8), \\
  |\cal{C}_{-1}(\ell)|& =\frac{1}{2}\ell^{10}-(1+C_{\ell})\ell^{9}+\O(\ell^8),
  \end{align*}
\label{main-conj}
\item[(iii)] We have  $|G^{\#}(m)|\ll m^{3}$ and $|P^{\#}(m)|\ll m^{2}$. \label{G-P-conj}
\item[(iv)] We have $|\cal{C}_i^1(\ell)| \ll \ell^{10}$ for $ -2\leq i\leq 6$ and $|\cal{C}^j(\ell)|\ll \ell^{10}$ for $j=0$ and $2\leq j\leq 5$.\label{5-conj}
\end{enumerate} 
\end{proposition}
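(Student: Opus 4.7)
The plan is to establish each part by combining the standard formula for $|\GSp_4(\F_\ell)|$, Lemma \ref{matrix-poly-lem} (which counts matrices in $G^\gamma(\ell)$ with a prescribed reciprocal quartic characteristic polynomial), elementary counting over the polynomial parameters $(a_1, a_2, \gamma)$, and the Chinese Remainder Theorem to lift the prime case to square-free $m$. For (i), the well-known identity $|\GSp_4(\F_\ell)| = \ell^4 (\ell - 1)(\ell^2 - 1)(\ell^4 - 1)$ expands to $\ell^{11} + O(\ell^{10})$, and dividing by $|\Lambda(\ell)| = \ell - 1$ gives $|P(\ell)| = \ell^4(\ell^2 - 1)^2(\ell^2 + 1) = \ell^{10} - \ell^8 - \ell^6 + \ell^4 = \ell^{10} + O(\ell^8)$. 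For square-free $m$, the ring isomorphism $\Z/m\Z \simeq \prod_{\ell \mid m} \F_\ell$ induces $G(m) \simeq \prod G(\ell)$ and $\Lambda(m) \simeq \prod \Lambda(\ell)$, giving $|G(m)| \asymp m^{11}$ and $|P(m)| \asymp m^{10}$ multiplicatively.

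For (ii), I would first pass to $G(\ell)$: by (\ref{quadratic-P-G}) the value $\left(\tfrac{\Delta_M}{\ell}\right)$ depends only on the image of $M$ in $P(\ell)$, so the preimage of $\cal{C}_\epsilon(\ell)$ in $G(\ell)$ equals $\{M \in G(\ell) : \left(\tfrac{\Delta_M}{\ell}\right) = \epsilon\}$ and has cardinality $(\ell - 1)\,|\cal{C}_\epsilon(\ell)|$. Lemma \ref{matrix-poly-lem} supplies $\#\{M \in G^\gamma(\ell) : \car_M = f\} = \ell^8 + O(\ell^7)$ uniformly in $\ell$ for each reciprocal quartic $f$ with multiplier $\gamma$. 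For fixed $(\gamma, a_1) \in \F_\ell^\times \times \F_\ell$, the map $a_2 \mapsto a_1^2 - 4a_2 + 8\gamma$ is a bijection of $\F_\ell$ (as $\ell$ is odd), so exactly $(\ell - 1)/2$ values of $a_2$ produce a nonzero square, the same number produce a non-residue, and one produces zero. Summing over $(\gamma, a_1, a_2)$ yields
\[
(\ell - 1)|\cal{C}_1(\ell)| = \tfrac{1}{2}(\ell-1)^2\ell\bigl(\ell^8 + O(\ell^7)\bigr), \quad (\ell-1)|\cal{C}_0(\ell)| = (\ell-1)\ell\bigl(\ell^8 + O(\ell^7)\bigr),
\]
and dividing by $\ell - 1$ gives $|\cal{C}_0(\ell)| = \ell^9 + O(\ell^8)$ and $|\cal{C}_1(\ell)| = \tfrac{1}{2}\ell^{10} - \tfrac{1}{2}\ell^9 + O(\ell^9) = \tfrac{1}{2}\ell^{10} + C_\ell \ell^9 + O(\ell^8)$ with $C_\ell$ uniformly bounded in $\ell$, thanks to the uniform implied constant in Lemma \ref{matrix-poly-lem}; the estimate for $|\cal{C}_{-1}(\ell)|$ then follows from the identity $|P(\ell)| = |\cal{C}_1(\ell)| + |\cal{C}_0(\ell)| + |\cal{C}_{-1}(\ell)|$ together with the expansion from part (i).

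For (iii), each conjugacy class in $G(\ell)$ is determined by its characteristic polynomial, parametrized by $(a_1, a_2, \gamma) \in \F_\ell^2 \times \F_\ell^\times$ (yielding $\ell^2(\ell - 1) \ll \ell^3$ choices), together with finitely many Jordan-type refinements uniformly bounded in $\ell$, so $|G^\#(\ell)| \ll \ell^3$. Conjugacy classes in $P(\ell)$ are $\Lambda(\ell)$-orbits on $G^\#(\ell)$ under the scalar action $(a_1, a_2, \gamma) \mapsto (\lambda a_1, \lambda^2 a_2, \lambda^2 \gamma)$ whose generic orbit has size $\asymp \ell$, so $|P^\#(\ell)| \ll \ell^2$; CRT extends both bounds multiplicatively to square-free $m$. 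For (iv), each of $\cal{C}_i^1(\ell), \cal{C}^0(\ell), \cal{C}^j(\ell)$ ($2 \leq j \leq 5$) is defined by a single algebraic constraint on $(a_1, a_2, \gamma)$, cutting the $\asymp \ell^3$ admissible triples down to $\asymp \ell^2$; multiplying by the $\ell^8 + O(\ell^7)$ matrices per polynomial yields the $\ll \ell^{10}$ bound. The main bookkeeping obstacle is the simultaneous tracking, in (ii), of the principal $\tfrac{(\ell-1)^2\ell^9}{2}$ contribution and the lower-order correction from Lemma \ref{matrix-poly-lem}'s error term after division by $\ell - 1$; the uniformity of that implied constant is precisely what keeps $C_\ell$ uniformly bounded in $\ell$.
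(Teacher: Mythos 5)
Your argument follows the paper's proof essentially step for step: the same group-order formula and Chinese Remainder Theorem reduction for (i); the same reduction of (ii) to Lemma \ref{matrix-poly-lem} after counting characteristic polynomials in each residue class of $\Delta_f$ (your parametrization by $a_2$, using that $a_2\mapsto a_1^2-4a_2+8\gamma$ is a bijection, is equivalent to the paper's parametrization by factorizations $(X^2+\beta_1X+\gamma)(X^2+\beta_2X+\gamma)$ coming from Lemma \ref{quad-symb}); and the same one-algebraic-constraint count for (iv), with $|\mathcal{C}_{-1}(\ell)|$ obtained by subtraction exactly as in the paper. The one place you are looser than the paper is (iii): for $|G^{\#}(\ell)|\ll\ell^3$ the paper cites Wall and Gallagher rather than your (correct but unproved) assertion that each characteristic polynomial supports a uniformly bounded number of classes; more importantly, for $|P^{\#}(\ell)|\ll\ell^2$ you appeal to the size of the \emph{generic} $\Lambda(\ell)$-orbit, whereas one needs a lower bound on \emph{every} orbit, since a priori many small orbits could inflate the count. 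The paper closes this by observing that if $bI_{4}$ stabilizes a conjugacy class then comparing determinants forces $b^4=1$, so every orbit contains at least $(\ell-1)/4$ classes; the analogous one-line remark in your setup (the relation $\lambda^2\gamma=\gamma$ forces $\lambda=\pm1$ on characteristic-polynomial data) would make your version airtight.
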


\begin{proof}
\begin{enumerate}
\item[(i)] The well-known formula $|\GSp_{2g}(\Z/\ell\Z)|=(\ell-1)\prod_{i=1}^g(\ell^{2i}-1)\ell^{2i-1}$  \cite[Theorem 3.1.2, p. 35]{Om78} yields 
\[
|G(\ell)|  = \ell^4(\ell-1)(\ell^2-1)(\ell^4-1) = \ell^{11}+\O(\ell^{10}).
\] 
It follows that
$
|P(\ell)|  = \ell^4(\ell^2-1)(\ell^4-1) = \ell^{10}+\O(\ell^8).
$

Since $m$ is a squarefree positive integer, we can write the prime decomposition of $m$ as 
$\displaystyle m=\prod_{1\leq i\leq k}\ell_i$. By the Chinese Remainder Theorem, we obtain
\begin{align*}
&|G(m)|  = \prod_{1\leq i\leq k}|G(\ell_i)|  \asymp \prod_{1\leq i\leq k} \ell_i^{11}=m^{11}, \\
 & |P(m)|=  \prod_{1\leq i\leq k}|P(\ell_i)|  \asymp \prod_{1\leq i\leq k} \ell_i^{10} = m^{10}.
\end{align*}
\item[(ii)] 
By Lemma \ref{quad-symb}, the value of $\left(\frac{\Delta_{\hat{g}}}{\ell} \right)$ reflects the decomposition type of the characteristic polynomial of $\hat{g}$ in $(\Z/\ell\Z)[X]$. Combining  with Lemma \ref{matrix-poly-lem}, we obtain 
\begin{align*}
|\cal{C}_1(\ell)|
& = \frac{1}{\ell-1} \sum_{\substack{\hat{g}\in G(\ell)\\\left(\frac{\Delta_{\hat{g}}}{\ell} \right)=1}}1 \\
& = \frac{1}{\ell-1}\sum_{\gamma\in (\Z/\ell\Z)^{\times}}\sum_{\substack{\hat{g}\in G^{\gamma}(\ell)\\\left(\frac{\Delta_{\hat{g}}}{\ell} \right)=1}}1\\
&= \frac{1}{\ell-1}\sum_{\gamma\in (\Z/\ell\Z)^{\times}}\sum_{\substack{f(X)\in (\Z/\ell\Z)[X]\\ f(X)=(X^2+\beta_1X+\gamma)(X^2+\beta_2X+\gamma)\\ \beta_1\neq \beta_2 \in \Z/\ell\Z}}\#\{\hat{g}\in G^{\gamma}(\ell): \car_{\hat{g}}(X)=f(X)\}\\
&=  \frac{1}{\ell-1}(\ell-1) \frac{\ell(\ell-1)}{2}(\ell^8+\O(\ell^7))\\
&=\frac{1}{2}\ell^{10}+C_{\ell}\ell^9+\O(\ell^8),
\end{align*}
where $C_{\ell}=\O(1)$. 

Similarly, we have
\begin{align*}
|\cal{C}_0(\ell)|
& = \frac{1}{\ell-1} \sum_{\substack{\hat{g}\in G(\ell)\\\left(\frac{\Delta_{\hat{g}}}{\ell} \right)=0}}1 \\
&=\frac{1}{\ell-1}\sum_{\gamma\in (\Z/\ell\Z)^{\times}}\sum_{\substack{\hat{g}\in G^{\gamma}(\ell)\\\left(\frac{\Delta_{\hat{g}}}{\ell} \right)=0}}1\\
&= \frac{1}{\ell-1}\sum_{\gamma\in (\Z/\ell\Z)^{\times}}\sum_{\substack{f(X)\in (\Z/\ell\Z)[X]\\ f(X)=(X^2+\beta X+\gamma)^2\\ \beta\in \Z/\ell\Z}}\#\{\hat{g}\in G^{\gamma}(\ell): \car_{\hat{g}}(X)=f(X)\}\\
&=  \frac{1}{\ell-1} (\ell-1) \ell(\ell^8+\O(\ell^7))\\
&=\ell^{9}+\O(\ell^8).
\end{align*}
From the observation 
\[
|P(\ell)|=|\cal{C}_1(\ell)|+|\cal{C}_0(\ell)|+|\cal{C}_{-1}(\ell)|
\]
and 
$ |P(\ell)|=\ell^{10}+\O(\ell^8)$ in part (i),  we obtain 
\[
|\cal{C}_{-1}(\ell)|=\frac{1}{2}\ell^{10}-(1+C_{\ell})\ell^9+\O(\ell^8).
\]
\item[(iii)] 
The upper bound for $|G^{\#}(\ell)|$ can be derived from \cite[(iii), p. 36]{Wa63} and \cite[(2), p. 175]{Ga70}. 
To obtain an upper bound for $|P(\ell)^{\#}|$, we proceed as follows.
Recall that each conjugacy class in $P(\ell)$ may be viewed as the equivalence class of $\Lambda(\ell)$-orbits of certain conjugacy class in $G^{\#}(\ell)$. 
Now fix an arbitrary element 
 $\cal{C}\in G^{\#}(\ell)$. If there is an element $b\in (\Z/{\ell}\Z)^{\times}$ such that $(bI) \cal{C}=\cal{C}$, 
 then, by comparing  determinants, we have
$b^{4}=1$. So $b$ may take at most $4$ elements.
 By the orbit-stabilizer theorem, each  $\Lambda(\ell)$-orbit 
 of $G^{\#}(\ell)$ contains at least 
 $\frac{| \Lambda(\ell)|}{4}$ conjugacy classes. Therefore, 
\[
|P({\ell})^{\#}| \leq  \frac{|G^{\#}(\ell)|}{\frac{|(\Z/\ell\Z)^{\times}|}{4}} \ll \ell^2.
\] 
 By the Chinese Remainder Theorem, we get
$|G^{\#}(m)|\ll m^{3}$ and $|P^{\#}(m)|\ll m^2$.

\item[(iv)] 
The sizes of the unions of conjugacy classes can be estimated almost identically, so we only present the proof for the bound of $|\cal{C}^0(\ell)|$. By Lemma \ref{matrix-poly-lem}, 
\begin{align*}
|\cal{C}^0(\ell)|  &=\sum_{a_1\in \Z/\ell\Z}\sum_{\gamma\in (\Z/\ell\Z)^{\times}} \#\{M\in G^{\gamma}(\ell): \car_M(X)=X^4+a_1X^3+a_2X^2+\gamma a_1X+\gamma^2, 4a_2=a_1^2+8\gamma\}\\
& \ll \sum_{a_1\in \Z/\ell\Z}\sum_{\gamma\in (\Z/\ell\Z)^{\times}} \ell^8= (\ell-1)\ell^{9}.
\end{align*}
\end{enumerate}
\end{proof}

\begin{proposition}\label{countings-2}
Let  $\ell\geq 5$ be any prime. We have 
\begin{enumerate}
\item[(i)] 
 $|\G(\ell)| =\ell^{7}+\O(\ell^6),   \ \ |\P(\ell)|  =\ell^{6}+\O(\ell^4).$
 \label{G-P-RM}
\item[(ii)]
$|\cal{C}^{RM}_{ 1}(\ell)| =\ell^{5}+\O(\ell^4).$
  \label{main-conj-RM}
\item[(iii)]   $|\G^{\#}(\ell)|\ll \ell^{3}$ and $|\P^{\#}(\ell)|\ll \ell^{2}$. \label{G-P-conj-RM}
\end{enumerate} 
\end{proposition}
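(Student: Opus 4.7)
The plan is to exhibit $\G(\ell)$ as a fibration over the common determinant and then reduce each count to a $\GL_2$-level calculation already handled by Lemma~\ref{matrix-poly-lem-RM}, mirroring the structure of Proposition~\ref{countings}. For (i), the map $(M_1, M_2) \mapsto \det M_1 = \det M_2$ partitions $\G(\ell)$ into $\ell - 1$ fibers of cardinality $|\SL_2(\F_\ell)|^2 = \ell^2(\ell^2-1)^2$, so $|\G(\ell)| = (\ell-1)\ell^2(\ell^2-1)^2 = \ell^7 + \O(\ell^6)$. Since the $\Lambda(\ell)$-action $(M_1, M_2) \mapsto (\lambda M_1, \lambda M_2)$ on $\G(\ell)$ is free (it is the restriction of the free left-scaling action on $\GL_2(\F_\ell)^2$), dividing by $\ell-1$ yields $|\P(\ell)| = \ell^2(\ell^2-1)^2 = \ell^6 + \O(\ell^4)$.

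For (ii), I would parametrize $\cal{C}_1^{RM}(\ell)$ by the common characteristic polynomial. The trace condition $\tr \hat{g}_1 = \tr \hat{g}_2$ and the determinant condition $\det \hat{g}_1 = \det \hat{g}_2$ (the latter built into $\G(\ell)$) are jointly $\Lambda(\ell)$-invariant and so descend to $\P(\ell)$; together they force the lifts $\hat{g}_1,\hat{g}_2$ to share a common characteristic polynomial in $\F_\ell[X]$ of degree $2$ with invertible constant term, for which there are $\ell(\ell-1)$ choices. Lemma~\ref{matrix-poly-lem-RM} with $g=2$ gives $\ell^2 + \O(\ell)$ matrices in $\GL_2(\F_\ell)$ realizing any prescribed such polynomial, so squaring the count and summing over all $\ell(\ell-1)$ admissible polynomials yields $\ell^6 + \O(\ell^5)$ lifts in $\G(\ell)$; dividing by $|\Lambda(\ell)| = \ell - 1$ produces $|\cal{C}_1^{RM}(\ell)| = \ell^5 + \O(\ell^4)$.

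The main obstacle is (iii), the bound on the number of conjugacy classes. For $|\G^{\#}(\ell)|$, I would observe that the $\GL_2(\F_\ell) \times \GL_2(\F_\ell)$-orbit of any $(M_1, M_2) \in \G(\ell)$ under componentwise conjugation is the product $c_1 \times c_2$ of the individual $\GL_2(\F_\ell)$-conjugacy classes. The number of such pairs $(c_1, c_2)$ with $\det c_1 = \det c_2$ is $\O(\ell^3)$, since $\GL_2(\F_\ell)$ has $\O(\ell)$ conjugacy classes with any prescribed determinant (one per trace $t \in \Z/\ell\Z$, plus an occasional split when $t^2 = 4d$). A centralizer calculation, using that for every $M \in \GL_2(\F_\ell)$ the determinant image on $\Centr_{\GL_2(\F_\ell)}(M)$ has index at most $2$ in $(\Z/\ell\Z)^{\times}$, shows that each such product $c_1 \times c_2$ splits into $\O(1)$ orbits under the subgroup $\G(\ell)$, which gives $|\G^{\#}(\ell)| \ll \ell^3$. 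The passage to $|\P^{\#}(\ell)| \ll \ell^2$ then follows exactly as in Proposition~\ref{countings}(iii): if the $\G(\ell)$-conjugacy class of $(M_1, M_2)$ is fixed by scaling by $\lambda I_4$, comparing determinants on each component forces $\lambda^2 = 1$, so every $\Lambda(\ell)$-orbit in $\G^{\#}(\ell)$ has size at least $(\ell-1)/2$, yielding the claimed bound after division.
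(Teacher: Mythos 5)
Your proposal is correct. Parts (i) and (ii) are essentially the paper's own computations: the paper evaluates $|\G(\ell)|=|\GL_2(\F_\ell)|^2/|(\F_\ell)^{\times}|$ (your $\SL_2$-fibration yields the identical number) and proves (ii) by exactly your sum over common characteristic polynomials via Lemma \ref{matrix-poly-lem-RM}, followed by division by $|\Lambda(\ell)|$. The genuine divergence is in (iii): the paper deduces $|\G^{\#}(\ell)|\ll\ell^3$ and $|\P^{\#}(\ell)|\ll\ell^2$ by citing the conjugacy-class bounds of Fulman--Guralnick for subgroups of $\GL_2\times\GL_2$ and by asserting $\P(\ell)\simeq\PGL_2(\F_\ell)\times\PGL_2(\F_\ell)$, whereas you argue directly: the $\O(\ell^3)$ products $c_1\times c_2$ of $\GL_2$-classes with matching determinant cover $\G(\ell)$, each splits into $\O(1)$ orbits under the normal subgroup $\G(\ell)$ because $\det\bigl(\Centr_{\GL_2(\F_\ell)}(M)\bigr)$ has index at most $2$ in $(\F_\ell)^{\times}$ (so the relevant double-coset count $[\GL_2\times\GL_2:\G(\ell)\cdot\Centr(M_1)\times\Centr(M_2)]$ is at most $2$), and the descent to $\P(\ell)$ reuses the stabilizer-of-scaling argument from Proposition \ref{countings}(iii). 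Your route is more elementary and self-contained; it also quietly sidesteps a small imprecision in the paper, since the kernel of $\G(\ell)\to\PGL_2(\F_\ell)\times\PGL_2(\F_\ell)$ is $\{(\lambda_1I,\lambda_2I):\lambda_1^2=\lambda_2^2\}$, which strictly contains $\Lambda(\ell)$, so the claimed isomorphism for $\P(\ell)$ is really only a surjection with small kernel onto an index-two subgroup. The trade-off is that you must carry out the centralizer and orbit-counting details that the citation packages, but all the facts you invoke are standard and correct.
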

\begin{proof}
\begin{enumerate}
\item[(i)] Note that by definition,
 \[
 \G(\ell)=\frac{|\GL_2(\Z/\ell\Z)|^2}{|(\Z/\ell\Z)^{\times}|}=(\ell - 1)^3 \ell^{2} (\ell +1)^{2}=\ell^7+\O(\ell^6).
 \]
  Thus, the first part can be deduced immediately.  The second part follows from the observation that
\[
|\P(\ell)|=\frac{|\G(\ell)|}{|\Lambda(\ell)|}=\frac{(\ell - 1)^3 \ell^{2} (\ell +1)^{2}}{\ell-1}=\ell^6+\O(\ell^4).
\]
\item[(ii)] By Lemma \ref{matrix-poly-lem-RM}, we have 
\begin{align*}
|\cal{C}_{ 1}^{RM}(\ell)| & =\frac{1}{\ell-1} \sum_{\substack{g=(\hat{g}_1, \hat{g}_2)\in \G(\ell)\\\tr \hat{g}_1= \tr \hat{g}_2}} 1\\
& = \frac{1}{\ell-1}\sum_{d\in (\Z/\ell\Z)^{\times}}\sum_{t\in \Z/\ell\Z} \#\{\hat{g}_1\in \GL_2(\Z/\ell\Z): \car_{\hat{g}_1}(X)=X^2+tX+d\} \\
& \hspace{100pt} \times \#\{\hat{g}_2\in \GL_2(\Z/\ell\Z): \car_{\hat{g}_2}(X)=X^2+ tX+d\}\\
& =\frac{1}{\ell-1} (\ell-1)\ell (\ell^2+\O(\ell))^2\\
& =\ell^5+\O(\ell^4).
\end{align*}


\item[(iii)] 
Recall from  \cite[Theorem, p. 91]{FeFi60} that $|\GL_2(\Z/\ell\Z)^{\#}|=\ell^2-1$.  Moreover,  each conjugacy class of $\GL_2(\Z/\ell\Z)^{\#}$ is generated by a matrix with one of the following types:
\begin{align*}
g_{1} = \left(\begin{matrix}a & 0 \\ 0 & b \end{matrix}\right), \quad & g_{2} = \left(\begin{matrix}a & 0 \\ 0 & a \end{matrix}\right), \\
g_{3} = \left(\begin{matrix}a & 1 \\ 0 & a \end{matrix}\right), \quad & g_{4} = \left(\begin{matrix}a & \epsilon b \\ b & a \end{matrix}\right),
\end{align*}
where $a\neq b$, $a, b\in (\Z/\ell\Z)^{\times}$ and $\epsilon$ is a fixed generator  in the multiplicative group $\F_{\ell^2}^{\times}$. Therefore,  the number of conjugacy classes with a fixed determinant in each type is at most $\ell-1$, $2$, $2$, $\ell-1$, respectively. So we have 
\[
|\G(\ell)^{\#}|\leq (\ell-1)(4+2(\ell-1))^2 \leq 4(\ell+1)^{2}(\ell-1).
\]
Fix an arbitrary element 
 $\cal{C}\in \G(\ell)^{\#}$. If there is an element $b\in (\Z/\ell\Z)^{\times}$ such that $(bI) \cal{C}=\cal{C}$, 
 then by comparing  determinants, we have
$b^{4}=1$. So $b$ take at most $4$ values in $(\Z/\ell\Z)^{\times}$.
 By the orbit-stabilizer theorem, each  $\Lambda(\ell)$-orbit 
 of $\G(\ell)^{\#}$ contains at least 
 $\frac{| \Lambda(\ell)|}{4}$ conjugacy classes. Therefore,
\[
|\P(\ell)^{\#}|\leq  \frac{|G(\ell)^{\#}|}{\frac{|(\Z/\ell\Z)^{\times}|}{4}} \leq  16(\ell+1)^{2}.
\]

\end{enumerate}
\end{proof}

\begin{proposition}\label{countings-CM}
Let  $\ell\geq 5$ be any prime. We have 

\begin{enumerate}
\item[(i)] 
 $|\T(\ell)| =\ell^{3}+\O(\ell^2).$
 \label{G-P-CM}
\item[(ii)]
$|\mathcal{C}^{CM}(\ell)| \ll \ell.$
  \label{main-conj-CM}
\end{enumerate} 
\end{proposition}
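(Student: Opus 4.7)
The plan is a direct parameterization in both parts, exploiting the fact that $\T(\ell)$ is the split diagonal torus of $\GSp_4$ cut out by a single multiplicative condition.

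For part (i), I observe that for $g=2$ an element of $\T(\ell)$ has the shape $\diag(\lambda_1, \lambda_2, \lambda_1', \lambda_2')$ subject only to the determinant constraint $\lambda_1\lambda_2 = \lambda_1'\lambda_2'$. I can therefore choose $\lambda_1, \lambda_2, \lambda_1' \in (\Z/\ell\Z)^{\times}$ freely, and then $\lambda_2' = \lambda_1\lambda_2(\lambda_1')^{-1}$ is forced and automatically a unit. Hence $|\T(\ell)| = (\ell-1)^3 = \ell^3 + \O(\ell^2)$.

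For part (ii), I pass to the full preimage
\[
S \coloneqq \{\diag(\lambda_1, \lambda_2, \lambda_1', \lambda_2') \in \T(\ell) : \lambda_1+\lambda_2 = \lambda_1'+\lambda_2'\}
\]
of $\mathcal{C}^{CM}(\ell)$ in $\T(\ell)$. The defining determinant equality $\lambda_1\lambda_2 = \lambda_1'\lambda_2'$ together with the trace equality $\lambda_1+\lambda_2 = \lambda_1'+\lambda_2'$ says that the unordered pairs $\{\lambda_1,\lambda_2\}$ and $\{\lambda_1',\lambda_2'\}$ are both roots of the same quadratic $X^2 - sX + p \in (\Z/\ell\Z)[X]$; in particular, given $(\lambda_1,\lambda_2) \in ((\Z/\ell\Z)^{\times})^2$ there are at most two choices of $(\lambda_1',\lambda_2')$, yielding $|S| \leq 2(\ell-1)^2$. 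Both defining conditions are stable under the scaling $\diag(\lambda_1, \lambda_2, \lambda_1', \lambda_2') \mapsto \mu\,\diag(\lambda_1, \lambda_2, \lambda_1', \lambda_2')$ for $\mu I_4 \in \Lambda(\ell)$ (sums scale by $\mu$, products by $\mu^2$), so $S$ is $\Lambda(\ell)$-stable; and the $\Lambda(\ell)$-action is free on diagonal matrices with unit entries, so every orbit has size $|\Lambda(\ell)| = \ell-1$. Therefore
\[
|\mathcal{C}^{CM}(\ell)| \;=\; \frac{|S|}{\ell-1} \;\leq\; 2(\ell-1) \;\ll\; \ell.
\]

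There is no real obstacle here: the argument reduces to the observation that trace and determinant together determine an unordered pair of eigenvalues in $(\Z/\ell\Z)^{\times}$. The only bookkeeping to double-check is the $\Lambda(\ell)$-invariance of the trace condition (so it descends to the quotient) and the freeness of the $\Lambda(\ell)$-action (so all orbits have the common size $\ell-1$), both of which are immediate.
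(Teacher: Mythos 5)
Your proof is correct and follows essentially the same route as the paper: part (i) is the same count $(\ell-1)^4/(\ell-1)=(\ell-1)^3$ (phrased as a free choice of three entries), and part (ii) likewise bounds the preimage in $\T(\ell)$ by $2(\ell-1)^2$ using the fact that the trace and determinant conditions pin down the unordered pair $\{\lambda_1',\lambda_2'\}$ as the roots of a fixed quadratic, then divides by $|\Lambda(\ell)|=\ell-1$. Your explicit check that the conditions are $\Lambda(\ell)$-stable and that the scalar action is free is a slight tightening of a step the paper leaves implicit, but the argument is the same.
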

\begin{proof}
(i) Note that by definition, 
\[
|\T(\ell)|=\frac{|(\Z/\ell\Z)^{\times}|^{4}}{|(\Z/\ell\Z)^{\times}|}=(\ell-1)^3=\ell^{3}+\O(\ell^2).
\]

(ii) 
We denote by
\[
\cal{C}_1^{CM}(\ell) \coloneqq \{g=(g_1, g_2)\in T(\ell): \tr \hat{g}_1=\tr \hat{g}_2\}.
\]
Then, 
\begin{align*}
|\mathcal{C}_1^{CM}(\ell)| & =\sum_{\lambda_1, \lambda_2\in (\Z/\ell\Z)^{\times}} \#\{(\lambda_1', \lambda_2')\in \Z/\ell\Z\times \Z/\ell\Z:  \lambda'_1+\lambda'_2 = \lambda_1+ \lambda_2, \lambda_1' \lambda_2'= \lambda_1 \lambda_2\}\\ 
& \leq 2(\ell-1)^2 \ll \ell^2.
\end{align*}
Since $\mathcal{C}^{CM}(\ell)$ is the image of $\mathcal{C}_1^{CM}(\ell)$ in $\T(\ell)$, we obtain
\[
|\mathcal{C}^{CM}(\ell)|=\frac{|\mathcal{C}_1^{CM}(\ell)|}{|\Lambda(\ell)|}\ll \frac{\ell^2}{\ell-1} \ll \ell.
\]
\end{proof}

\bigskip

\section{Characterization of absolutely simple abelian varieties over finite fields}
\subsection{$q$-Weil polynomials}
Let $A/K$ be an abelian variety of dimension $g$, and let $N_A$ denote the norm of the conductor of $A$. For each positive integer $m$, we denote by $A[m]\subseteq A(\overline{K})$  the $m$-torsion subgroup of $A$, which is a free $(\Z/m\Z)$-module of rank $2g$. For each prime $\ell$, we denote by $T_{\ell}(A)\coloneqq\varprojlim_{n}A[\ell^n]$  the $\ell$-adic Tate module of $A$, which is a free $\Z_{\ell}$-module of rank $2g$. The actions of the absolute Galois group $\Gal(\overline{K}/K)$ on $A[m]$ and $T_{\ell}(A)$ induce the  mod-$m$ Galois representation and $\ell$-adic Galois representation, respectively, as follows,
\begin{align*}
\bar{\rho}_{A, m}: \Gal(\overline{K}/K) \to \Aut_{\Z/m\Z}(A[m]),\\
\rho_{A, \ell}: \Gal(\overline{K}/K) \to \Aut_{\Z_\ell}(T_{\ell}(A)).
\end{align*} 
We fix a $K$-polarization of $A$ of the minimal degree $d$ and let $\ell$ be a prime larger than $d$. The polarization pf $A$ and the existence of Weil pairing impose  constraints that imply $\Im \bar{\rho}_{A, \ell}\subseteq\GSp_{2g}(\Z/\ell\Z)$ and $\Im \rho_{A, \ell}\subseteq \GSp_{2g}(\Z_{\ell})$ (see \cite[pp. 32-33]{Lom16}).
Moreover, there exists a constant $C_K(A)$  depending only on $A$ and $K$, such that  for each integer $m$ with $(m, C_K(A))=1$, we have  $\overline{\rho}_{A, m}$ has  image equal to $\GSp_{2g}(\Z/m\Z)$.

For each  prime $\fp$ in $K$ with $\fp \nmid N_A$, we denote by  $\overline{A}_{\fp}$   the reduction of $A$ modulo $\fp$. We write $\Frob_{\fp}\subset \Gal(\overline{K}/K)$ to denote  the Frobenius conjugacy class at $\fp$. For any prime $\ell$ that is coprime to  $\fp$, by the N\'{e}ron-Ogg-Shafarevich criteria  for abelian varieties \cite{SeTa68},  the $\ell$-adic Galois representation is unramified at $\fp$. We denote the characteristic polynomial of $\rho_{A, \ell}(\Frob_{\fp})$ by
\[
P_{A, \fp}(X) = X^{2g}+ a_{1, \fp}(A)X^{2g-1} +\ldots +a_{g, \fp}(A)X^{g}+ \ldots +q^{g-1}a_{1, \fp}(A)X+q^g,
\]
where $q=N (\fp)$.
This  is a monic $q$-Weil polynomial in $\Z[X]$ which is independent of the choice of $\ell$.

Let $B$ be an abelian variety defined over $\F_q$ and let $\pi_{B}$ be its Frobenius endomorphism. Recall that there is a unique polynomial $P_{B}(X)\in \Z[X]$ which interpolates the degree of endomorphisms $n-\pi_B$ for all integers $n$ (see \cite[p. 57]{WaMi71}). For simplicity, we refer to this polynomial as \emph{the characteristic polynomial of $B$}. The following result  shows that the characteristic polynomial of an abelian variety defined over $\F_q$ determines its $\F_q$-isogeny class. 

\begin{theorem}\label{Honda-Tate}
Let $B_1, B_2$ be abelian varieties defined over $\F_q$. Then $B_1$ and $B_2$ are isogenous over $\F_q$ if and only if $P_{B_1}(X)=P_{B_2}(X)$.
\end{theorem}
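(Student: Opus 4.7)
The plan is to prove the two implications separately, using the $\ell$-adic Tate module as a bridge between $\F_q$-isogeny classes and characteristic polynomials. Fix any prime $\ell$ coprime to $q$. Recall that $\pi_B$ acts on the rational Tate module $V_{\ell}(B)\coloneqq T_{\ell}(B)\otimes_{\Z_{\ell}} \Q_{\ell}$, and that its characteristic polynomial on this $2g$-dimensional $\Q_{\ell}$-vector space coincides with $P_B(X)$ and is independent of $\ell$; this is the identification I will use throughout.

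The forward direction is routine. Assuming an $\F_q$-isogeny $\phi:B_1\to B_2$, the induced map $V_{\ell}(\phi): V_{\ell}(B_1)\to V_{\ell}(B_2)$ is $\Gal(\overline{\F}_q/\F_q)$-equivariant, injective with finite cokernel on $T_{\ell}$, and hence an isomorphism after inverting $\ell$. Since $\phi$ intertwines the Frobenius endomorphisms on the two sides, $V_{\ell}(\phi)$ conjugates $\pi_{B_1}$ to $\pi_{B_2}$, and therefore $P_{B_1}(X)=P_{B_2}(X)$.

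For the converse I would invoke two deep results of Tate from \cite{Ta66}:
\begin{enumerate}
\item[(a)] (Tate's isogeny theorem) The natural map
\[
\Hom_{\F_q}(B_1,B_2)\otimes_{\Z}\Q_{\ell} \;\longrightarrow\; \Hom_{\Gal(\overline{\F}_q/\F_q)}\!\bigl(V_{\ell}(B_1),V_{\ell}(B_2)\bigr)
\]
is an isomorphism.
\item[(b)] (Semisimplicity) Frobenius acts semisimply on each $V_{\ell}(B_i)$.
\end{enumerate}
Granting these, suppose $P_{B_1}(X)=P_{B_2}(X)$. Because $\Gal(\overline{\F}_q/\F_q)\simeq \widehat{\Z}$ is topologically generated by Frobenius, two semisimple continuous $\Q_{\ell}$-representations of this group on which Frobenius has equal characteristic polynomial must be isomorphic. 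Applied to $V_{\ell}(B_1)$ and $V_{\ell}(B_2)$, this produces a Galois-equivariant $\Q_{\ell}$-linear isomorphism. By (a), this isomorphism is represented by an element of $\Hom_{\F_q}(B_1,B_2)\otimes_{\Z}\Q_{\ell}$. The subset of isomorphisms is Zariski open (defined by nonvanishing of the determinant), so it meets the $\Q$-form $\Hom_{\F_q}(B_1,B_2)\otimes_{\Z}\Q$; clearing denominators yields an actual $\F_q$-homomorphism $B_1\to B_2$ whose induced map on $V_{\ell}$ is still an isomorphism, hence an $\F_q$-isogeny.

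The main obstacle is of course the pair (a), (b): both are the principal results of Tate's 1966 paper and are proved via an analysis of the endomorphism algebra together with a descent argument using polarizations. Rather than reproduce that material, my proof would cite \cite{Ta66} directly; all the work beyond that citation is the semisimplicity-plus-equal-trace argument and the Zariski-density step, both of which are short.
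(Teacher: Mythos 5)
Your proposal is correct, and it is the standard argument. The paper's own ``proof'' is simply a citation of \cite{Ta66}, Theorem~1(c), which is verbatim the statement being proved; what you have done is reconstruct how that part of Tate's theorem follows from the two deeper inputs (the isogeny theorem and semisimplicity of Frobenius, i.e.\ parts (a) and (b) of the same theorem), together with the routine forward direction and the Zariski-density step for passing from a $\Q_{\ell}$-isogeny to an actual one. So the two routes coincide in substance --- both ultimately rest entirely on \cite{Ta66} --- and your version just makes the reduction explicit.
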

\begin{proof}
See \cite[Theorem 1 (c), p. 139]{Ta66}.
\end{proof}


The following result illustrates the bounds of the coefficients of a $q$-Weil polynomial.  

\begin{lemma}\label{lem-2.1}
Let $q$ be a  prime power and let $f(X)=X^4+a_1X^3+a_2X^2+qa_1X+q^2\in \Z[X]$. Then $f(X)$ is a $q$-Weil polynomial if and only if
\[
|a_{1}|\leq 4\sqrt{q}, \ 2|a_{1}|\sqrt{q}-2q\leq a_2\leq \frac{a_1^2}{4}+2q.
\]
\end{lemma}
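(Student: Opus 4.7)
The plan is to exploit the reciprocal (symplectic-type) shape of $f(X)$, factor it over $\R$ into two monic quadratics each with constant term $q$, and translate the $q$-Weil condition on the roots of $f$ into elementary inequalities on $(a_1, a_2)$ via the common structure of those quadratic factors.

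First, I would note the functional equation $X^4 f(q/X) = q^2 f(X)$, which pairs the roots of $f$ as $\{\alpha_1, q/\alpha_1\}$ and $\{\alpha_2, q/\alpha_2\}$. Setting $s_i := \alpha_i + q/\alpha_i$, I obtain
\[
f(X) = (X^2 - s_1 X + q)(X^2 - s_2 X + q),
\]
and expanding and matching coefficients gives $s_1 + s_2 = -a_1$, $s_1 s_2 = a_2 - 2q$; equivalently, $s_1$ and $s_2$ are the roots of the \emph{resolvent quadratic}
\[
g(Y) := Y^2 + a_1 Y + (a_2 - 2q).
\]
Thus the statement reduces to: \emph{$f$ is a $q$-Weil polynomial if and only if both roots of $g$ are real and lie in $[-2\sqrt{q},\, 2\sqrt{q}]$.}

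For the forward direction, if $f$ is a $q$-Weil polynomial then $|\alpha_i| = \sqrt{q}$ forces $q/\alpha_i = \bar{\alpha}_i$, so $s_i = 2\,\Re(\alpha_i)$ is real with $|s_i| \leq 2\sqrt{q}$. I would then encode the statement ``both real roots of $g$ lie in $[-2\sqrt{q}, 2\sqrt{q}]$'' by the three standard facts about a real quadratic with positive leading coefficient: non-negative discriminant of $g$ (giving $a_2 \leq a_1^2/4 + 2q$), the vertex $-a_1/2$ belonging to the interval (giving $|a_1| \leq 4\sqrt{q}$), and non-negativity at the endpoints $g(\pm 2\sqrt{q}) \geq 0$, which combine to $a_2 \geq 2|a_1|\sqrt{q} - 2q$. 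For the converse, the same three inequalities run in reverse place both roots of $g$ in $[-2\sqrt{q}, 2\sqrt{q}]$; then each factor $X^2 - s_i X + q$ has discriminant $s_i^2 - 4q \leq 0$, so its roots are complex conjugates with product $q$, and hence of absolute value $\sqrt{q}$.

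No genuine obstacle arises: the argument is elementary, using only Vieta's formulas and sign analysis of a real quadratic. The only point warranting a word of care is the degenerate case $s_i^2 = 4q$, where the factor acquires a real double root $\pm\sqrt{q}$; since $\pm\sqrt{q}$ is itself a $q$-Weil number, the characterization still goes through, and both directions may be packaged uniformly as the single equivalence on the root locations of $g$.
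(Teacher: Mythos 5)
Your argument is correct. The paper does not actually prove this lemma --- it simply cites \cite[Lemma 2.1]{MaNa02} --- so there is no in-paper proof to compare against; what you have written is a self-contained elementary proof, and it is essentially the standard argument (going back to R\"uck and used by Maisner--Nart) behind the cited result: pass from $f$ to the ``real Weil polynomial'' $g(Y)=Y^2+a_1Y+(a_2-2q)$ whose roots are $s_i=\alpha_i+q/\alpha_i$, and observe that $f$ is a $q$-Weil polynomial iff both roots of $g$ are real and lie in $[-2\sqrt{q},2\sqrt{q}]$, which your three conditions (discriminant, vertex location, sign at the endpoints) encode exactly; these unwind to $|a_1|\leq 4\sqrt{q}$ and $2|a_1|\sqrt{q}-2q\leq a_2\leq a_1^2/4+2q$ (note the statement's ``$-2p$'' is a typo for ``$-2q$'', which your computation implicitly corrects). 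Two small points you handle adequately but could make fully explicit: in the forward direction the pairing of the four roots into the two conjugate pairs $\{\alpha_i,\bar{\alpha}_i\}$ needs a word when $f$ has real roots, but the Weil condition forces any real root to be $\pm\sqrt{q}$, which (by irrationality of $\sqrt{q}$ for $q$ prime) happens only when $f=(X^2-q)^2$, i.e.\ $a_1=0$, $a_2=-2q$, a boundary case where everything still checks out; and in the converse direction the identity $f(X)=(X^2-s_1X+q)(X^2-s_2X+q)$ holds automatically from Vieta for the roots $s_1,s_2$ of $g$, so no circularity arises.
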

\begin{proof}
See \cite[Lemma 2.1, p. 323]{MaNa02}.
\end{proof}

\subsection{Absolutely simple abelian surfaces over finite fields}

Now we go back to the setting of abelian surfaces. The following result of Masiner and Nart \cite{MaNa02} provides a complete description of an absolutely simple abelian surface over a finite field via its characteristic polynomial.

\begin{lemma}\label{thm-2.15}
Let $f(X)=X^4+a_1X^3+a_2X^2+qa_1X+q^2\in \Z[X]$ be a $q$-Weil polynomial and let
\[
\Delta_f\coloneqq a_1^2-4a_2+8q,\  \delta_f\coloneqq (a_2+2q)^2-4qa_1^2.
\]
Then there exists an absolutely simple abelian surface $B$ defined over $\F_q$, where $q$ is a power of $p$, such that $P_{B}(X)=f(X)$ if and only if
\begin{enumerate}
\item[(i)] $\Delta_f$ is not a square in $\Z$, and
\item[(ii)] either  $v_p(a_2)=0$, $a_1^2\notin\{0, q+a_2, 2a_2, 3(a_2-q)\}$, or
 $v_p(a_1)=0$, $v_p(a_2)\geq \frac{\delta_f}{2}$ and  $\delta_f$ is not a square in $\Z_p$.
\end{enumerate} 
\end{lemma}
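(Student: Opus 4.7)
The plan is to use the Honda-Tate correspondence (Theorem \ref{Honda-Tate}) to convert the question about $B$ into a polynomial question on $f(X)$. Writing the roots of $f$ as $\pi_1, q/\pi_1, \pi_2, q/\pi_2$ and setting $t_i = \pi_i + q/\pi_i$, one has the formal factorization $f(X) = (X^2 - t_1 X + q)(X^2 - t_2 X + q)$, where $t_1, t_2$ are the roots of the resolvent $Y^2 + a_1 Y + (a_2 - 2q)$, whose discriminant is exactly $\Delta_f$. Hence the factorization descends to $\Z[X]$ if and only if $\Delta_f$ is a perfect square in $\Z$, and in that case each factor is a $q$-Weil polynomial of degree $2$, so by Theorem \ref{Honda-Tate} the associated $B$ is $\F_q$-isogenous to a product of two elliptic curves. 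Condition (i) is therefore precisely the requirement that $B$ be $\F_q$-simple.

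Given $\F_q$-simplicity, I would then invoke the classical criterion that a simple abelian variety $B/\F_q$ with Frobenius $\pi_B$ is absolutely simple if and only if $\Q(\pi_B^n) = \Q(\pi_B)$ for every $n \geq 1$. When $f$ is irreducible over $\Z$, the field $\Q(\pi_B)$ is a CM quartic, and the only way $\Q(\pi_B^n)$ can be a proper subfield is for it to be imaginary quadratic; this happens precisely when $\pi_B/\overline{\pi}_B$ is a nontrivial root of unity. The classification of roots of unity in quartic CM fields then restricts the possible orders $n$ to a finite explicit list.

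For each admissible $n$, translating ``$\pi_B/\overline{\pi}_B$ is a primitive $n$-th root of unity'' into a polynomial identity among the elementary symmetric functions of the Frobenius roots produces exactly the four forbidden values $a_1^2 \in \{0, \, q+a_2, \, 2a_2, \, 3(a_2-q)\}$ appearing in the first clause of (ii), one for each of the small-order degenerations. The second clause, involving $\delta_f = (a_2 + 2q)^2 - 4qa_1^2$, addresses the characteristic-$p$ obstruction: $\delta_f$ is the discriminant of the resolvent controlling $\pi_B^2 + q^2/\pi_B^2$, and the combined requirements $v_p(a_1) = 0$, $v_p(a_2) \geq v_p(\delta_f)/2$, and $\delta_f \notin (\Z_p^\times)^2$ are exactly what Tate's Newton-polygon criterion imposes in the mixed-slope case, in which the ordinary analysis of the first clause does not apply.

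The principal obstacle is verifying the completeness of this case analysis — showing that the configurations listed in (ii) exhaust every way an $\F_q$-simple abelian surface can fail to remain simple after base change, and that no further algebraic identity among $a_1, a_2, q$ produces an unexpected splitting. This requires a careful enumeration of the subfield lattice of quartic CM fields, a matching of each admissible root-of-unity case to its corresponding algebraic relation via Newton's identities for $\pi_B^n + \overline{\pi}_B^n$, and a Newton-polygon analysis of $f$ at primes above $p$ to dispose of the remaining slope-mixed case. Once the necessity of (i) and (ii) is established case by case, the converse follows from the existence statement in Honda-Tate applied to $f$, which produces the desired absolutely simple abelian surface $B$.
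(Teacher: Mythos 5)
The paper does not actually prove this lemma; its ``proof'' is the citation to Maisner--Nart \cite{MaNa02}, Theorem 2.15, so there is no internal argument to measure yours against. As a reconstruction of that external proof, your outline names the right ingredients (Honda--Tate, the resolvent quadratic $Y^2+a_1Y+(a_2-2q)$ whose discriminant is $\Delta_f$, roots of unity among ratios of Frobenius eigenvalues, and a $p$-adic obstruction), but it remains a sketch: you yourself identify as ``the principal obstacle'' the verification that the four forbidden values of $a_1^2$ together with the $\delta_f$-clause exhaust all failures of absolute simplicity, and that exhaustive case analysis \emph{is} the content of the lemma, so deferring it leaves the proof essentially undone.

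Beyond the deferral, three specific points are wrong or incomplete as stated. First, condition (i) is not ``precisely the requirement that $B$ be $\F_q$-simple'': there exist $\F_q$-simple abelian surfaces with $P_B=h^2$ for an irreducible quadratic $h$ (Tate's case $e=2$, with quaternionic endomorphism algebra over $\Q(\pi)$), and for these $\Delta_f=0$ is a square; so (i) is necessary for \emph{absolute} simplicity but is not equivalent to simplicity over $\F_q$, and the square/non-square dichotomy must be argued more carefully. Second, your reduction of ``$\Q(\pi^n)\subsetneq\Q(\pi)$'' to ``$\pi/\overline{\pi}$ is a nontrivial root of unity with $\Q(\pi^n)$ imaginary quadratic'' is too narrow: the proper subfield can be $\Q$ or the real quadratic subfield, and the relevant ratios are $\pi/\sigma(\pi)$ over \emph{all} conjugates $\sigma$ (e.g.\ $\pi_1/\pi_2$ and $\pi_1\pi_2/q$), not only complex conjugation; the forbidden values of $a_1^2$ arise from these other ratios. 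Third, the converse is not merely ``the existence statement in Honda--Tate applied to $f$'': Honda--Tate attaches to an irreducible quartic Weil polynomial a simple abelian variety whose characteristic polynomial is $f^e$, and one must compute the local invariants at the places above $p$ to show $e=1$, hence $\dim B=2$ and $P_B=f$ exactly. That computation is where the hypotheses $v_p(a_2)=0$, respectively $v_p(a_1)=0$ together with $\delta_f$ not a square in $\Z_p$, actually intervene --- not only in a Newton-polygon criterion for absolute simplicity. Filling these gaps would essentially amount to rewriting the Maisner--Nart argument, which is why the paper simply cites it.
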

\begin{proof}
See \cite[Theorem 2.15, p. 326]{MaNa02}.
\end{proof}

Now we consider an abelian surface $A$ over a number field $K$. For a prime $\fp\in \Sigma_K$ such that $\fp\nmid N_A$, let 
\begin{equation}\label{char-poly-surface}
P_{\overline{A}_{\fp}}(X) = X^4+a_{1, \fp}X^3+a_{2, \fp}X^2+qa_{1, \fp}X+q^2\in \Z[X]
\end{equation}
be the characteristic polynomial of $\overline{A}_{\fp}/\F_{\fp}$, where $q=N(\fp)$.   It is well-known that 
\[
P_{\overline{A}_{\fp}}(X) =P_{A, \fp}(X).
\]
With this in mind,   from now on, we use the notation $P_{A, \fp}(X)$ instead of $P_{\overline{A}_{\fp}}(X)$.
We also  denote following  invariants of  $P_{A, \fp}(X)$:
\[
\Delta_{A, \fp}\coloneqq a_{1, \fp}^2-4a_{2, \fp}+8q, \ \  \delta_{A, \fp} \coloneqq (a_{2, \fp}+2q)^2-4q a_{1, \fp}^2.
\]

The first lemma is a general criterion for the primes $\fp\in \Sigma_K$ such that $\fp \nmid N_A$ for which the reduction of an abelian surface $A/K$ at $\fp$ is not absolutely simple. In particular, this applies to the case where $\End_{\overline{K}}(A)\simeq \Z$.
\begin{lemma}\label{main-lem}
Let $A/K$ be an abelian surface.
If there is a prime $\fp$ in $K$ of degree 1 such that  $\fp\nmid N_A$ for which $\overline{A}_{\fp}$ is not absolutely simple, then either $\Delta_{A, \fp}$ is a square in $\Z$ or one of the following is satisfied:
\begin{enumerate}
\item $a_{2, \fp}\in \{ip: -2\leq i\leq 6, i\in \Z\}$, \label{main-lem-1} 
\item $a_{1, \fp}^2\in \{0,p+a_{2, p}, 2a_{2, p}, 3(a_{2, p}-p)\}$, \label{main-lem-2} 
\end{enumerate}
where $p=N(\fp)$.
\end{lemma}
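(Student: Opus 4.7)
The plan is to apply the Maisner--Nart characterization of absolutely simple abelian surfaces over finite fields (Lemma \ref{thm-2.15}) in contrapositive form, using that absolute simplicity is an $\F_p$-isogeny invariant. Since $\fp$ has degree $1$, we have $q = N(\fp) = p$, and the Frobenius characteristic polynomial
\[
P_{A,\fp}(X) = X^4 + a_{1,\fp} X^3 + a_{2,\fp} X^2 + p\,a_{1,\fp} X + p^2
\]
has exactly the shape considered in Lemma \ref{thm-2.15} with $q = p$. By Theorem \ref{Honda-Tate}, the $\F_p$-isogeny class of $\overline{A}_\fp$ is determined by $P_{A,\fp}$, and absolute simplicity is preserved under isogeny, so $\overline{A}_\fp$ is absolutely simple iff conditions (i) and (ii) of Lemma \ref{thm-2.15} both hold for $f = P_{A,\fp}$.

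Taking contrapositives, if $\overline{A}_\fp$ is not absolutely simple then either (i) fails, in which case $\Delta_{A,\fp}$ is a square in $\Z$ and we are done, or (ii) fails. In the latter situation both disjuncts in (ii) are false, and I only need to exploit the failure of the first one, which reads: either $v_p(a_{2,\fp}) \geq 1$, or $a_{1,\fp}^2 \in \{0,\ p + a_{2,\fp},\ 2a_{2,\fp},\ 3(a_{2,\fp}-p)\}$. The latter alternative is condition (2) of the lemma verbatim, so what remains is to handle the case $p \mid a_{2,\fp}$.

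To finish, I would feed this divisibility into the Weil $q$-polynomial bounds of Lemma \ref{lem-2.1} with $q = p$. The upper bound $a_{2,\fp} \leq a_{1,\fp}^2/4 + 2p$ combined with $|a_{1,\fp}| \leq 4\sqrt{p}$ gives $a_{2,\fp} \leq 6p$, while the lower bound $2|a_{1,\fp}|\sqrt{p} - 2p \leq a_{2,\fp}$ yields $a_{2,\fp} \geq -2p$. Hence $-2p \leq a_{2,\fp} \leq 6p$, so if $p \mid a_{2,\fp}$ then $a_{2,\fp} = ip$ with $i \in \Z$ and $-2 \leq i \leq 6$, giving condition (1).

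There is no real obstacle here: the argument is essentially a dictionary-style translation of the Maisner--Nart classification, with the only numerical input being that Weil's bounds confine $a_{2,\fp}/p$ to the short integer interval $[-2,6]$ once $p\mid a_{2,\fp}$ is assumed. The one subtle point to confirm is that Lemma \ref{thm-2.15} indeed characterizes \emph{absolute} simplicity rather than $\F_p$-simplicity, which is precisely how it is stated; once this is in hand, combining it with isogeny-invariance of absolute simplicity and the Honda--Tate theorem produces the desired trichotomy.
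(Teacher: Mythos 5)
Your proposal is correct and follows essentially the same route as the paper: reduce to the degree-one residue field so $q=p$, invoke Honda--Tate plus isogeny-invariance of absolute simplicity to conclude that $P_{A,\fp}$ is not the characteristic polynomial of any absolutely simple surface over $\F_p$, negate Lemma \ref{thm-2.15}, and then use the Weil bounds of Lemma \ref{lem-2.1} to turn $p\mid a_{2,\fp}$ into $a_{2,\fp}=ip$ with $-2\leq i\leq 6$. The paper's proof is exactly this argument, stated slightly more tersely.
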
 
\begin{proof}
As $\fp$ is a prime of degree 1, $\overline{A}_{\fp}$ is an abelian variety defined over the finite field $\F_{p}$. Fix a prime $\fp\nmid N_A$. If $\overline{A}_{\fp}$ is not absolutely simple, then it is not $\F_p$-isogenous to any absolutely simple abelian surfaces defined over $\F_p$. By Theorem \ref{Honda-Tate}, $P_{A, \fp}(X)$ is not the characteristic polynomial of any absolutely simple abelian surfaces over $\F_p$. So from Lemma \ref{thm-2.15}, either $\Delta_{A, p}$ is a square or $p$ satisfies  (\ref{main-lem-1}) or (\ref{main-lem-2}). 
Indeed, we can derive (\ref{main-lem-1}) by noting the bound in Lemma \ref{lem-2.1} and the fact that  $a_{2, \fp}$ is an integer.   
\end{proof}

Now we come to the second lemma, which provides a necessary condition for degree 1 primes $\fp\in \Sigma_K$  such that the reduction $\overline{A}_{\fp}/\F_p$ splits when $F :=\End_{\overline{K}}(A)\otimes_{\Z} \Q$ is isomorphic to a real quadratic field.

\begin{lemma}\label{main-lem-RM}
Let $A/K$ be an abelian surface such that $\End_{\overline{K}}(A)$ isomorphic to an order in  $F$. For a degree 1 prime $\fp\in \Sigma_K$ such that $\fp\nmid N_A$, if $\overline{A}_{\fp}$ splits, then one of the following holds:
\begin{enumerate}
\item $P_{A, \fp}(X) =(X^2+b_{1}X+p)^2 \in \Z[X], |b_1|\leq 2\sqrt{p}$, 
\item $P_{A, \fp}(X) =(X^2+b_{1}X+p)(X^2-b_{1}X+p) \in \Z[X], |b_1|\leq 2\sqrt{p}$.
\end{enumerate}
 If moreover $ \End_{\overline{K}}(A) =  \End_{K}(A)$ and $\overline{A}_{\fp}$ splits, then $\overline{A}_{\fp}$   splits into the square of an elliptic curve.  In particular,  
 $P_{A, \fp}(X) =(X^2+b_{1}X+p)^2 \in \Z[X], |b_1|\leq 2\sqrt{p}$.
\end{lemma}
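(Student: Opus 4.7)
The plan is to classify based on the field of definition of the endomorphisms. Let $L$ be the smallest extension of $K$ over which all $\overline{K}$-endomorphisms of $A$ are defined, so $\End_L(A) = \End_{\overline{K}}(A) \simeq \mathcal{O}$, an order in $E$. Since $\Aut_\Q(E) = \langle \sigma \rangle \simeq \Z/2$, the Galois action of $\Gal(\overline{K}/K)$ on $\mathcal{O}$ factors through $\Gal(L/K)$ with $[L:K] \leq 2$. Fix $\fp \nmid N_A$ of degree one and assume $\overline{A}_\fp$ splits over $\F_p$; by Theorem \ref{Honda-Tate} and Lemma \ref{lem-2.1}, we may write $P_{A,\fp}(X) = (X^2 - t_1 X + p)(X^2 - t_2 X + p)$ with $|t_i| \leq 2\sqrt{p}$. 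The two forms (1), (2) in the lemma will correspond respectively to the split/trivial and inert behavior of $\fp$ in $L/K$.

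If $\fp$ admits a prime $\mathfrak{P}$ of $L$ with residual degree one (automatic when $L = K$), the $\mathcal{O}$-action on $\overline{A}_\fp$ descends to $\F_p$ and commutes with $\pi_\fp$. Choosing an auxiliary prime $\ell$ so that $V_\ell(A)$ is free of rank two over $E_\ell := E \otimes \Q_\ell$, the $E_\ell$-characteristic polynomial of $\pi_\fp$ has the form $X^2 - \alpha X + p$ for some $\alpha \in \mathcal{O}$, whence $P_{A,\fp}(X)$ factors in $E[X]$ as $(X^2 - \alpha X + p)(X^2 - \sigma(\alpha) X + p)$. Comparing this with the $\Z[X]$-factorization and invoking unique factorization in the PID $E[X]$, I will conclude $\alpha = t_1 = t_2 \in \Z$, giving $P_{A,\fp}(X) = (X^2 + b_1 X + p)^2$ with $b_1 = -\alpha$. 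The potentially exceptional subcase, where $X^2 - \alpha X + p$ splits over $E$ into linear factors with roots $u, v \in E \cap \R$, is ruled out by the Weil constraint $|u| = |v| = \sqrt{p}$ combined with $uv = p$: these force $u = v = \pm\sqrt{p}$, yielding $P_{A,\fp}(X) = (X^2 - p)^2$, whose only $\Z[X]$-factorization into two monic quadratics with constant term $p$ would require $t_i = \pm 2\sqrt{p} \notin \Z$, a contradiction.

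The crux is the inert case: $[L:K] = 2$ and $\fp$ is inert in $L/K$, so $\pi_\fp$ conjugates the $\mathcal{O}$-action by $\sigma$, i.e., $\pi_\fp \phi = \sigma(\phi) \pi_\fp$ for all $\phi \in \mathcal{O}$. I will choose an auxiliary $\ell$ that splits in $E$, so $E_\ell \simeq \Q_\ell \oplus \Q_\ell$ and $V_\ell(A) = V_1 \oplus V_2$ with each $V_i$ of $\Q_\ell$-rank two. Since $\sigma$ swaps the two summands of $E_\ell$, the semi-linearity forces $\pi_\fp(V_1) \subseteq V_2$ and $\pi_\fp(V_2) \subseteq V_1$, so in block form $\pi_\fp$ is anti-diagonal and $\tr \pi_\fp = 0$. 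Hence $a_{1,\fp} = 0$, and matching with $(X^2 - t_1 X + p)(X^2 - t_2 X + p)$ gives $t_2 = -t_1$, producing $P_{A,\fp}(X) = (X^2 + b_1 X + p)(X^2 - b_1 X + p)$. For the second assertion of the lemma, $\End_K(A) = \End_{\overline{K}}(A)$ forces $L = K$, precluding the inert case, so only form (1) occurs; by Theorem \ref{Honda-Tate} this $\F_p$-isogeny class is that of $E_0 \times E_0$ where $E_0/\F_p$ has characteristic polynomial $X^2 + b_1 X + p$. The main subtle point will be setting up the semi-linearity and block decomposition cleanly in the inert case; everything else reduces to unique factorization over $E[X]$ and the Weil bound.
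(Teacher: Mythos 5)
Your proposal is correct in outline, but it takes a genuinely different route from the paper's. Both arguments start the same way, using Silverberg's theorem to produce an extension $L=K'$ of $K$ of degree at most $2$ over which all geometric endomorphisms are defined. The paper's key step is then an endomorphism-algebra obstruction: if $\overline{A}_{\fp}$ were $\F_p$-isogenous to $E_1\times E_2$ with $E_1,E_2$ not isogenous over the residue field $\F_{\fp'}$, then $\End_{\F_{\fp'}}(\overline{A}_{\fp'})\otimes_{\Z}\Q$ would be a product of imaginary quadratic fields or definite quaternion algebras and could not contain the real quadratic field $E$; hence $E_1$ and $E_2$ are isogenous over $\F_{\fp'}$, so $P_{A,\fp'}$ has repeated roots, and the Maisner--Nart identity $a_{1,\fp}^2\Delta_{A,\fp}=\Delta_{A,\fp'}=0$ yields exactly the dichotomy $\Delta_{A,\fp}=0$ (form (1)) or $a_{1,\fp}=0$ (form (2)). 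You instead exploit the $E\otimes\Q_{\ell}$-module structure of the Tate module: in the linear case the $E$-characteristic polynomial of Frobenius together with unique factorization in $E[X]$ (and irreducibility of $X^2-t_iX+p$ over the real field $E$, since its roots are non-real) forces form (1), while in the inert case the semi-linearity $\pi_{\fp}\phi=\sigma(\phi)\pi_{\fp}$ makes Frobenius anti-diagonal on $V_1\oplus V_2$, hence traceless, forcing form (2). Your route has the virtue of tying the two shapes directly to the splitting behaviour of $\fp$ in $L/K$, so the final assertion (that $\End_K(A)=\End_{\overline{K}}(A)$ leaves only form (1)) is immediate; but it imports several standard-yet-nontrivial facts that you should cite or prove: freeness of $V_{\ell}(A)$ of rank $2$ over $E\otimes\Q_{\ell}$, the $E$-rationality (not merely $E\otimes\Q_\ell$-rationality) of the reduced characteristic polynomial of $\pi_{\fp}$ and the fact that its constant term is $p$ away from your exceptional subcase, and the Galois-equivariance of the reduction map on endomorphisms underlying the semi-linearity relation. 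The paper avoids all of this at the cost of quoting the explicit discriminant relation between $\Delta_{A,\fp}$ and $\Delta_{A,\fp'}$ from Maisner--Nart. One small point to make explicit in your write-up: $L/K$ is unramified at every $\fp\nmid N_A$ (for instance because $L\subseteq K(A[n])$ for all $n\geq 3$), so your split/inert dichotomy is exhaustive.
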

\begin{proof}
By \cite[Proposition 4.3, p. 258]{Si92}, there exists a field extension $K'/K$ 
with $[K':K]\leq 2$ such that $ \End_{\overline{K'}}(A) =  \End_{K'}(A)$. For a prime $\fp$ in $K$ of degree 1 such that  $\fp\nmid N_A$ for which $\overline{A}_{\fp}$ splits, we denote by $\fp'$ a prime in $K'$ lying above $\fp$. Then, we have $[\F_{\fp'}: \F_{p}]=$ 1 or 2. Recall that there is an injection of endomorphism rings:
\[
\End_{K'}(A) \hookrightarrow  \End_{\F_{\fp'}}(\overline{A}_{\fp'}).
\]
In particular, $\End_{\F_{\fp'}}(\overline{A}_{\fp'})\otimes_{\Z} \Q$ contains the field $F$. Since $\overline{A}_{\fp}$ splits, then there exist two elliptic curves $E_1$, $E_2$
defined over $\F_{p}$ such that $\overline{A}_{\fp}$ is isogenous over $\F_{p}$ to $E_1\times E_2$. Suppose $E_1$ and $E_2$ are not $\F_{\fp'}$-isogenous, then there is an isomorphism of endomorphism rings
\[
\End_{\F_{\fp'}}(\overline{A}_{\fp'})\simeq \End_{\F_{\fp'}}(E_1)\times  \End_{\F_{\fp'}}(E_2).
\]
Recall that $\End_{\F_{\fp'}}(E_1)$ and $\End_{\F_{\fp'}}(E_2)$ are isomorphic to orders in an imaginary quadratic field or orders in a definite quaternion algebra over $\Q$. This contradicts with $\End_{\F_{\fp'}}(\overline{A}_{\fp'})$ containing an order in $F$. Therefore,  $\overline{A}_{\fp}$ is $\F_{p}$-isogenous to $E_1\times E_2$ where $E_1, E_2$ are $\F_{\fp'}$-isogenous.

First, we assume $[\F_{\fp'}: \F_{p}]=2$. Since $\overline{A}_{\fp}$ is isogenous over $\F_{\fp'}$ to the square of an elliptic curve over $\F_{p}$,  the polynomial $P_{A, \fp'}(X)$ has repeated roots. In particular, $\Delta_{A, \fp'}=0$. By \cite[Lemma 2.13, p. 325]{MaNa02}, we have 
\[
a_{1, \fp}^2\Delta_{A, \fp}=\Delta_{A, \fp'}=0.
\]
 If $a_{1, \fp}\neq 0$, then we get $\Delta_{A, \fp}=0$, so $P_{A, \fp}(X)=(X^2+b_{1}X+p)^2 \in \Z[X], |b_1|\leq 2\sqrt{p}$. If $a_{1, \fp}=0$, then 
$P_{A, \fp}(X)=X^4+a_{2, \fp}X^2+p^2$. As $\overline{A}_{\fp}$ splits,  $P_{A, \fp}(X)$ is the product of two quadratic polynomials. Therefore,  we deduce that   $P_{A, \fp}(X) =(X^2+b_{1}X+p)(X^2-b_{1}X+p) \in \Z[X], |b_1|\leq 2\sqrt{p}$.

Next, we assume $[\F_{\fp'}: \F_{p}]=1$. Then  $\overline{A}_{\fp}$ is isogenous to the square of an elliptic curve over $\F_{p}$. Hence 
 $P_{A, \fp}(X)  =(X^2+b_{1}X+p)^2 \in \Z[X], |b_1|\leq 2\sqrt{p}$. If  $ \End_{\overline{K}}(A) =  \End_{K}(A)$, then $\F_{\fp'}= \F_{p}$, so the last assertion follows immediately.

\end{proof}

Now we come to the last case where $\End_{\overline{K}}(A)\otimes_{\Z} \Q$ is isomorphic to a CM quartic field $F$. In particular,  $F$ contains a real quadratic field. If we also assume 
$ \End_{\overline{K}}(A) = \End_{K}(A)$, then Lemma \ref{main-lem-RM}  applies to obtain the following result. One can also find a proof in  \cite[Proposition 2.1, p. 7]{MuPa08}.

\begin{lemma}\label{main-lem-CM}
Let $A/K$ be an abelian surface such that $ \End_{\overline{K}}(A) = \End_{K}(A)$ isomorphic to an order in  $F$. For a prime $\fp$ in $K$ of degree 1 and $\fp\nmid N_A$, if $\overline{A}_{\fp}$ splits, then 
 $P_{A, \fp}(X) =(X^2+b_{1}X+p)^2 \in \Z[X], |b_1|\leq 2\sqrt{p}$.
\end{lemma}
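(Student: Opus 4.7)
The plan is to adapt the proof of Lemma \ref{main-lem-RM}, replacing the real quadratic dimension count by one that still rules out non-isogenous splittings when $E$ is a CM quartic field. Since $\End_{\overline{K}}(A)=\End_K(A)$ is an order in $E$, the injection of endomorphism rings on reduction gives $\End_K(A)\hookrightarrow \End_{\F_p}(\overline{A}_\fp)$, and therefore $\End_{\F_p}(\overline{A}_\fp)\otimes_{\Z}\Q$ contains the $4$-dimensional $\Q$-algebra $E$. Unlike in Lemma \ref{main-lem-RM}, no auxiliary field extension $K'/K$ is needed, because $\End_{\overline{K}}(A)=\End_K(A)$ already, and because $\fp$ has degree $1$ the reduction $\overline{A}_\fp$ is already defined over $\F_p$.

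Next, suppose $\overline{A}_\fp$ splits, so $\overline{A}_\fp$ is $\F_p$-isogenous to $E_1\times E_2$ for elliptic curves $E_1, E_2$ over $\F_p$. The crucial step is to show that $E_1$ and $E_2$ must be $\F_p$-isogenous. If they were not, then
\[
\End_{\F_p}(\overline{A}_\fp)\otimes_{\Z}\Q \;\simeq\; \bigl(\End_{\F_p}(E_1)\otimes_{\Z}\Q\bigr)\times \bigl(\End_{\F_p}(E_2)\otimes_{\Z}\Q\bigr),
\]
and any embedding of the field $E$ into a product $D_1\times D_2$ must descend to an embedding of $E$ into each factor $D_i$ (the kernel of each projection is an ideal of $E$, which, being a field and having to carry $1\mapsto(1,1)$, must be zero). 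But each $D_i=\End_{\F_p}(E_i)\otimes\Q$ is $\Q$, an imaginary quadratic field, or a definite quaternion algebra over $\Q$; the first two have $\Q$-dimension at most $2$, while a definite quaternion algebra $B/\Q$ satisfies $B\otimes_{\Q}\R\simeq\mathbb{H}$, whose subfields over $\R$ are only $\R$ and $\C$, so every subfield of $B$ is $\Q$ or imaginary quadratic. In every case, the CM quartic field $E$ fails to embed into $D_i$, a contradiction.

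Therefore $E_1\sim_{\F_p} E_2$, and by Honda--Tate (Theorem \ref{Honda-Tate}) we obtain $P_{E_1}(X)=P_{E_2}(X)=X^2+b_1X+p$ for some $b_1\in\Z$ with $|b_1|\leq 2\sqrt{p}$ by the Hasse bound. Hence
\[
P_{A,\fp}(X)=P_{E_1}(X)P_{E_2}(X)=(X^2+b_1X+p)^2\in\Z[X],
\]
as claimed. The main obstacle is the no-embedding claim in the non-isogenous case; once one has verified that a definite quaternion algebra over $\Q$ admits no CM quartic subfield (a short real-base-change argument), the remainder of the proof is a routine bookkeeping adaptation of Lemma \ref{main-lem-RM}.
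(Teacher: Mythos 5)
Your proof is correct and follows essentially the same route as the paper, which simply invokes the argument of Lemma \ref{main-lem-RM}: reduce to showing the two elliptic factors are $\F_p$-isogenous by observing that otherwise the endomorphism algebra of $\overline{A}_{\fp}$ would be a product of algebras (each $\Q$, imaginary quadratic, or definite quaternion) into which no suitable field embeds, then conclude via Honda--Tate. The only cosmetic difference is that the paper obtains the contradiction from the real quadratic subfield of $E$, whereas you use the quartic field $E$ itself; both embedding obstructions are valid, and your observation that no auxiliary extension $K'/K$ is needed here is also correct.
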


\bigskip

\section{Proof of the Theorems and Corollaries}
\subsection{The square sieve}
In this section, we introduce the square sieve method which will be applied to the problem of split reductions of an abelian surface with a trivial geometric endomorphism ring.

We first recall the square sieve for multisets.
\begin{theorem}\label{sq-seive}
Let $\cal{A}$ be a finite set of  integers (not necessarily distinct)  and let $\cal{P}$ be a set of distinct rational primes. Assume 
$\displaystyle \max_{\substack{0\neq \alpha\in \cal{A}}}\{\log |\alpha|\}< |\cal{P}|$. Set
$
\cal{S}(\cal{A})\coloneqq\{\alpha\in \cal{A}: \alpha \text{ is a nonzero square}\}.
$
Then
\[
|\cal{S}(\cal{A})|\ll \frac{|\cal{A}|}{|\cal{P}|}+\max_{\substack{\ell_1\neq \ell_2\\\ell_1, \ell_2\in \cal{P}}}\left|\sum_{\alpha\in \cal{A}}\left(\frac{\alpha}{\ell_1\ell_2}\right)\right|.
\]
\end{theorem}
\begin{proof}
This based on the upper and lower bound estimations of $\displaystyle \sum_{\alpha\in \mathcal{A}}\left(\sum_{\ell\in \mathcal{P}}\left(\frac{\alpha}{\ell}\right) \right)^2$. See \cite[Theorem 1, p. 251]{He84} or \cite[Theorem 18, p. 1557]{CoDa08} for details. 
\end{proof}

We keep the notation  in the previous sections. Namely, let $A/K$  be an abelian surface with $\End_{\overline{K}}(A)\simeq \Z$.  By  Serre's Open Image Theorem for abelian surfaces \cite[Th\'{e}or\`{e}me 3]{Se86}, there is a  constant $C_K(A)$,  that only depends on $A$ and $K$, such that for each integer $m$ with $(m, C_K(A))=1$, we have 
$
\bar{\rho}_{A, m}(\Gal(\overline{K}/K))= G(m).
$ 
For any prime $\fp\in \Sigma_K$ such that $\fp \nmid N_A$, recall the characteristic polynomial $P_{A, \fp}(X)$ of $\overline{A}_{\fp}$ as defined in (\ref{char-poly-surface}). The \emph{discriminant} of this polynomial is defined to be  $\Delta_{A, \fp} = a_{1, \fp}^2 - 4a_{2, \fp} + 8q$, where $q = N(\fp)$.

Let $x > 2$ be a fixed real number. In the application of Theorem \ref{sq-seive}, we consider the following sets:
\begin{itemize}
\item We denote by $\cal{A}\coloneqq\{\Delta_{A, \fp}: \fp \in \Sigma_K, N(\fp)=p \leq x, \fp\nmid N_A\}$. Here we view $\cal{A}$ as a multiset, namely, if there are $\fp_1\neq \fp_2$ such that $\Delta_{A, \fp_1}=\Delta_{A, \fp_2}$, we view $\Delta_{A, \fp_1}, \Delta_{A, \fp_2}$ as distinct elements in $\cal{A}$. Inside $\cal{A}$, we only consider the discriminant $\Delta_{A, \fp}$ where the $\fp$ is a degree 1 prime.
Then, the Prime Number Theorem implies that as $x\to \infty$, 
$
|\cal{A}|\ll \frac{x}{\log x}.
$ Moreover, by Lemma \ref{lem-2.1}, we have the bound $|\Delta_{A, \fp}| \leq 32p$.
\item Let $z=z(x)>0$ be a real number depending on $x$ such that $z\geq (\log x)^{100}$. We denote by  $\cal{P}\coloneqq\{\ell \text{  prime}: \max\{C_K(A), z\}< \ell< 2z,  \ell\nmid N_A \}$. We will view the set as a set of parameters and choose $z=z(x)$ later
to optimize our bounds. Notice that for each $m\in \Z$ consisting of prime divisors in $\cal{P}$, we have $\Im \bar{\rho}_{A, m}=G(m)$. Again, the Prime Number Theorem implies that, for sufficiently large $z$, 
$
|\cal{P}|\asymp \frac{z}{\log z}.
$ 
\item We denote by $\cal{S}(\cal{A})\coloneqq\{\Delta_{A, \fp}\in \cal{A}: \Delta_{A, \fp}\text{ is a nonzero square in } \Z\}$,  viewed also as a multiset. Then 
$
|\cal{S}(\cal{A})|=\#\{\fp \in \Sigma_K: N(\fp)=p\leq x, \fp\nmid N_A, \Delta_{A, \fp} \text{ is a nonzero square in } \Z \}.
$
\end{itemize}

Based on the above discussion, we have the following estimation of   $|\cal{S}(\cal{A})|$, which is going to be the main term in the estimation of $\pi_{A, \text{split}}(x)$ when $\End_{\overline{K}}(A)\simeq \Z$.
\begin{proposition}\label{estimate-1}
We keep the same notation as above, 
then
\[\ds |\cal{S}(\cal{A})| \ll \frac{x\log z}{z\log x} + \max_{\substack{\ell_1\neq \ell_2\\ \ell_1, \ell_2\in \cal{P}}}\left|\sum_{\substack{N(\fp)=p\leq x\\\fp\nmid N_A}}\left(\frac{\Delta_{A, \fp}}{\ell_1\ell_2} \right)\right|.\]
\end{proposition}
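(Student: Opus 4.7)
The plan is to apply the square sieve (Theorem \ref{sq-seive}) directly to the multiset $\cal{A}$ of discriminants $\Delta_{A,\fp}$, with the set of probe primes $\cal{P}$ as defined in the bullet list. The four terms produced by the sieve will be estimated individually and then collected: the first and the ``linear'' $\nu_{\cal{P}}$ error term will both contribute $\ll x\log z/z$, the ``quadratic'' $\nu_{\cal{P}}$ error term will contribute the $x(\log x)(\log z)^2/z^2$ piece, and the character sum term is left exactly as stated (it is the quantity to be attacked by Chebotarev methods in the following subsections).

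The two key quantitative inputs are the Prime Number Theorem estimates $|\cal{A}| \sim x/\log x$ and $|\cal{P}| \asymp z/\log z$ (both already recorded in the setup), together with a size bound on $\Delta_{A,\fp}$. For the latter I will invoke Lemma \ref{lem-2.1} applied to $P_{A,\fp}(X)$: the Hasse–Weil-type inequalities $|a_{1,\fp}| \le 4\sqrt{p}$ and $-2p\leq a_{2,\fp}\leq 6p$ force
\[
|\Delta_{A,\fp}| = |a_{1,\fp}^{2} - 4a_{2,\fp}+8p| \ll p \le x.
\]
Consequently $\nu_{\cal{P}}(\Delta_{A,\fp}) \le \omega(\Delta_{A,\fp}) \ll \log x$, which is the crude bound I will feed into the sieve.

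From here the term-by-term calculation is routine. The leading term is
\[
\frac{|\cal{A}|}{|\cal{P}|} \ll \frac{x/\log x}{z/\log z} \ll \frac{x\log z}{z}.
\]
For the linear error term I interchange sums and use the $\nu_{\cal{P}}$ bound to get
\[
\frac{2}{|\cal{P}|}\sum_{\alpha\in \cal{A}}\nu_{\cal{P}}(\alpha)
\;\ll\; \frac{\log z}{z}\cdot |\cal{A}|\log x \;\ll\; \frac{x\log z}{z},
\]
which absorbs into the first term. Similarly, the quadratic error term gives
\[
\frac{1}{|\cal{P}|^{2}}\sum_{\alpha\in \cal{A}}\nu_{\cal{P}}(\alpha)^{2}
\;\ll\; \frac{(\log z)^{2}}{z^{2}}\cdot |\cal{A}|(\log x)^{2} \;\ll\; \frac{x(\log x)(\log z)^{2}}{z^{2}}.
\]
Collecting these three contributions with the untouched character-sum term yields the stated inequality.

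There is no genuine obstacle in this proposition itself: it is a bookkeeping application of Theorem \ref{sq-seive} combined with elementary size bounds from Lemma \ref{lem-2.1} and PNT. The real difficulty, of course, is the character sum $\sum_{N(\fp)=p\le x}\left(\frac{\Delta_{A,\fp}}{\ell_{1}\ell_{2}}\right)$, but that estimate is deferred to later in Section 5, where the Chebotarev Density Theorem (in the forms collected in Theorem \ref{effective-CDT}) is applied in the extensions $K(A[\ell_{1}\ell_{2}])/K$ together with the large-Galois-image hypothesis and the counting results of Proposition \ref{countings}.
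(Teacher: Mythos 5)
Your proposal is correct and follows essentially the same route as the paper: apply the square sieve of Theorem \ref{sq-seive} to the multiset $\cal{A}$ with probe set $\cal{P}$, bound $|\Delta_{A,\fp}|\ll p$ via Lemma \ref{lem-2.1} to control $\nu_{\cal{P}}(\Delta_{A,\fp})$, use the Prime Number Theorem for $|\cal{A}|$ and $|\cal{P}|$, and leave the character sum untouched for the later Chebotarev analysis. The only cosmetic difference is that the paper estimates $\sum_{\fp}\nu_{\cal{P}}(\Delta_{A,\fp})$ via $\sum_{p\le x}\log p\ll x$ by partial summation rather than via $|\cal{A}|\cdot\log x$, which yields the same bounds.
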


\subsection{Proof of Theorem \ref{main-thm}}
We keep  the notation from earlier sections and assume $A/K$ is an absolutely simple abelian surface with $\End(A_{\overline{K}})\simeq \Z$. 

By Lemma \ref{main-lem}, we have
\begin{align}
\pi_{A, \text{split}}(x) &\leq \pi'_{A, \text{split}}(x)   \nonumber \\
& = \#\{\fp \in \Sigma_K: N(\fp) \leq x, \fp\nmid N_A, \overline{A}_{\fp} \text{ is not absolutely simple}\} \nonumber \\
& = \#\{\fp \in \Sigma_K: N(\fp) \leq x, \fp \text{ has degree 1, } \fp\nmid N_A, \overline{A}_{\fp} \text{ is not absolutely simple}\} \nonumber \\
& \hspace{0.5cm}  + \#\{\fp \in \Sigma_K: N(\fp) \leq x, \fp \text{ has degree greater than 1, } \fp\nmid N_A, \overline{A}_{\fp} \text{ is not absolutely simple}\} \nonumber \\
& \leq \#\{ \fp \in \Sigma_K, N(\fp)=p \leq x, \fp\nmid N_A, \Delta_{A, \fp}\neq 0 \text{ is a square in } \Z\} \label{sq-estimate-1}\\
&\hspace{0.5cm} + \#\{\fp \in \Sigma_K: N(\fp)=p \leq x, \fp\nmid N_A, \Delta_{A, \fp}=0\} \label{estimate-0}\\
& \hspace{0.5cm} + \#\{\fp \in \Sigma_K: N(\fp)=p \leq x, \fp\nmid N_A, a_{2, \fp}=ip, -2\leq i\leq  6, i\in \Z\} \label{estimate-2}\\
& \hspace{0.5cm} + \#\{\fp \in \Sigma_K: N(\fp)=p \leq x, \fp\nmid N_A, a_{1, \fp}=0\} \label{estimate-3}\\
&\hspace{0.5cm} +  \#\{\fp \in \Sigma_K: N(\fp)=p\leq x, \fp\nmid N_A, a_{1, \fp}^2=p+a_{2, \fp}\} \label{estimate-4}\\
&\hspace{0.5cm} +  \#\{\fp \in \Sigma_K: N(\fp)=p \leq x, \fp\nmid N_A, a_{1, \fp}^2=2a_{2, \fp}\} \label{estimate-5}\\
&\hspace{0.5cm} +  \#\{\fp \in \Sigma_K: N(\fp)=p \leq x: \fp\nmid N_A, a_{1, \fp}^2=3(a_{2, \fp}-p)\}\label{estimate-6}\\
& \hspace{0.5cm} + \#\{\fp \in \Sigma_K: N(\fp) \leq x: \fp \text{ has degree greater than 1}\} \label{estimate-7}
\end{align}

We will show that the equation  (\ref{sq-estimate-1}) is the main term. To  bound  (\ref{sq-estimate-1}), we apply the square sieve together with various effective Chebotarev Density Theorem for properly chosen subfield extensions. To bound (\ref{estimate-0})-(\ref{estimate-6}), we  use variations of the  effective Chebotarev Density Theorem for division fields of $A/K$.  Note that the equation (\ref{estimate-7}) is  $\O(x^{\frac{1}{2}})$ (see e.g., \cite[p. 138]{Se81}).

We observe that the prime counting function in (\ref{sq-estimate-1}) is equal to $|\cal{S}(\cal{A})|$. In view of Proposition \ref{estimate-1}, for any fixed primes  $\ell_1, \ell_2\in \cal{P}$ such that $\ell_1\neq \ell_2$,  we aim to  bound
$
\ds\left|\sum_{\substack{N(\fp)=p\leq x\\ \fp\nmid N_A}}\left( \frac{\Delta_{A, \fp}}{\ell_1\ell_2}\right)\right|.
$
We write
\begin{align}\label{S-pi-bound}
S(\ell_1\ell_2) & := \sum_{\substack{N(\fp)=p \leq x\\\fp\nmid N_A}}\left( \frac{\Delta_{A, \fp}}{\ell_1\ell_2}\right) =\sum_{\substack{N(\fp)=p\leq x, \fp\nmid N_A\\
\left(\frac{\Delta_{A, \fp}}{\ell_1} \right)\left(\frac{\Delta_{A, \fp}}{\ell_2} \right)=1}} 1-\sum_{\substack{N(\fp)=p \leq x, \fp\nmid N_A\\ \left(\frac{\Delta_{A, \fp}}{\ell_1} \right)\left(\frac{\Delta_{A, \fp}}{\ell_2} \right)=-1}} 1,\\  \nonumber
& =\pi_{A, 1}(x)-\pi_{A, -1}(x),
\end{align}
where
\begin{align}
\pi_{A, 1}(x) & \coloneqq\#\{\fp \in \Sigma_K: N(\fp)=p \leq x, \fp\nmid N_A, \left(\frac{\Delta_{A, \fp}}{\ell_1} \right)\left(\frac{\Delta_{A, \fp}}{\ell_2} \right)=1\}, \label{1-pi-counting}\\
\pi_{A, -1}(x) & \coloneqq\#\{\fp \in \Sigma_K: N(\fp)=p \leq x, \fp\nmid N_A, \left(\frac{\Delta_{A, \fp}}{\ell_1} \right)\left(\frac{\Delta_{A, \fp}}{\ell_2} \right)=-1\}. \label{-1-pi-counting}
\end{align}

Now we move on to find appropriate Chebotarev counting functions to bound  (\ref{1-pi-counting}) and (\ref{-1-pi-counting}).
Fix a prime $\fp\in \Sigma_K$ of degree 1 such that $\fp\nmid \ell_1\ell_2 N_A$. Recall that by  Serre's Open Image Theorem, for any $\ell_1, \ell_2\in \cal{P}$, we have 
$
\Im \overline{\rho}_{A, \ell_1\ell_2} \simeq \Gal(K(A[\ell_1\ell_2])/K) 
\simeq G(\ell_1\ell_2).
$ 
 Moreover,  the characteristic polynomial of the matrix  $\bar{\rho}_{A, \ell_1\ell_2}\left(\left(\frac{K(A[\ell_1\ell_2])/K}{\fp} \right)\right)$ is 
 \[
P_{A, \fp}(X) \equiv X^4+a_{1, \fp}X^3+a_{2, \fp}X^2+pa_{1, \fp}X+p^2 (\mod \ell_1\ell_2).
\]
In particular,  the following relation between the discriminants 
holds: 
\begin{equation}\label{disc-relation}
\Delta_{\bar{\rho}_{A, \ell_1\ell_2}\left(\left(\frac{K(A[\ell_1\ell_2])/K}{\fp} \right)\right)}\equiv \Delta_{A, \fp} (\mod \ell_1\ell_2).
\end{equation} 
To  find appropriate Chebotarev counting functions to bound   $\pi_{A, 1}(x)$ and $\pi_{A, -1}(x)$, it is natural to consider the Galois extension $K(A[\ell_1\ell_2])/K$ 
 with Galois group $G(\ell_1\ell_2)$
and  take
the following conjugation invariant sets:
\begin{align*}
\cal{C}'_+(\ell_1\ell_2) & \coloneqq\{g\in G(\ell_1\ell_2): \left(\frac{\Delta_{g}}{\ell_1} \right) \left(\frac{\Delta_{g}}{\ell_2} \right)=1\},\\
\cal{C}'_-(\ell_1\ell_2)&  \coloneqq\{g\in G(\ell_1\ell_2): \left(\frac{\Delta_{g}}{\ell_1} \right) \left(\frac{\Delta_{g}}{\ell_2} \right)=-1\}.
\end{align*}
Indeed, we have 
\begin{align*}
\pi_{A, 1}(x)=\pi_{\cal{C}'_+(\ell_1\ell_2)}(x, K(A[\ell_1\ell_2]/K))+\O(x^{\frac{1}{2}}),\\
\pi_{A, -1}(x)=\pi_{\cal{C}'_-(\ell_1\ell_2)}(x, K(A[\ell_1\ell_2]/K))+\O(x^{\frac{1}{2}}).
\end{align*}

However, motivated by the strategy of   Murty-Murty-Saradha \cite{MuMuSa88} and Cojocaru-David \cite{CoDa08}, we search for Chebotarev counting functions constructed from 
a subfield extensions of $K(A[\ell_1\ell_2])/K$  for better bounds. We find that working  in the field extension $K(A[\ell_1\ell_2])^{\Lambda(\ell_1\ell_2)}/K$ with 
$
 \Gal(K(A[\ell_1\ell_2])^{\Lambda(\ell_1\ell_2)}/K)\simeq P(\ell_1\ell_2)
$
does the work.
Fixing a lifting $\hat{g}\in G(\ell_1\ell_2)$ of $g$ for each $g\in P(\ell_1\ell_2)$ and recalling the relation (\ref{quadratic-P-G}), we define the modified conjugacy classes in $P(\ell_1\ell_2)$ as follows:
\begin{align*}
\cal{C}_+(\ell_1\ell_2) & \coloneqq\{g\in P(\ell_1\ell_2): \left(\frac{\Delta_{\hat{g}}}{\ell_1} \right) \left(\frac{\Delta_{\hat{g}}}{\ell_2} \right)=1\},\\
\cal{C}_-(\ell_1\ell_2)&  \coloneqq\{g\in P(\ell_1\ell_2): \left(\frac{\Delta_{\hat{g}}}{\ell_1} \right) \left(\frac{\Delta_{\hat{g}}}{\ell_2} \right)=-1\}.
\end{align*}
Clearly, $\cal{C}_{\pm}(\ell_1\ell_2)$ are the images of  $\cal{C}'_{\pm}(\ell_1\ell_2)$ under the projection $G(\ell_1\ell_2)/\Lambda(\ell_1\ell_2)\twoheadrightarrow P(\ell_1\ell_2)$. Note  that by (\ref{quadratic-P-G}), we have 
\[
\Lambda(\ell_1\ell_2) \cal{C}'_+(\ell_1\ell_2)\subseteq \cal{C}'_+(\ell_1\ell_2), \; \Lambda(\ell_1\ell_2) \cal{C}'_-(\ell_1\ell_2)\subseteq \cal{C}'_-(\ell_1\ell_2).
\] 
Therefore, a functorial comparison result  \cite[Lemma 2.6 (ii)]{Zy15}  between the Chebotarve counting functions gives:
\[
\pi_{\cal{C}'_+(\ell_1\ell_2)}(x, K(A[\ell_1\ell_2])/K) =\pi_{\cal{C}_+(\ell_1\ell_2)}(x, K(A[\ell_1\ell_2])^{\Lambda(\ell_1\ell_2)}/K), 
\]
\[
 \pi_{\cal{C}'_-(\ell_1\ell_2)}(x, K(A[\ell_1\ell_2])/K) = \pi_{\cal{C}_-(\ell_1\ell_2)}(x, K(A[\ell_1\ell_2])^{\Lambda(\ell_1\ell_2)}/K)).
\]
Observe that if a prime  $\fp\nmid \ell_1\ell_2 N_A$ is  counted by $\pi_{A, 1}(x) $ and 
$\pi_{A, -1}(x)$,
then by (\ref{disc-relation}), we have 
\[
\bar{\rho}_{A, \ell_1\ell_2}\left(\left(\frac{K(A[\ell_1\ell_2])/K}{\fp} \right)\right)\subseteq \cal{C}'_+(\ell_1\ell_2),
\]
\[
 \bar{\rho}_{A, \ell_1\ell_2}\left(\left(\frac{K(A[\ell_1\ell_2])/K}{\fp} \right)\right)\subseteq \cal{C}'_{-}(\ell_1\ell_2).
 \]
 The converse is also true for degree 1 primes.
Therefore, 
\begin{align}\label{transfer-to-Cheb}
S(\ell_1\ell_2) & =\pi_{A, 1}(x)-\pi_{A, -1}(x) \nonumber \\
& =\pi_{\cal{C}_+(\ell_1\ell_2)}(x, K(A[\ell_1\ell_2])^{\Lambda(\ell_1\ell_2)}/K) - \pi_{\cal{C}_-(\ell_1\ell_2) }(x, K(A[\ell_1\ell_2])^{\Lambda(\ell_1\ell_2)}/K) +\O_K(x^{\frac{1}{2}}). 
\end{align}

Before presenting the proof, we want to point out that it is the following two key steps that help advance the upper bounds. One is splitting the character sum $S(\ell_1\ell_2)$ into $\pi_{A, 1}(x)$ and $\pi_{A, -1}(x)$ as shown in (\ref{S-pi-bound}), which gives us  a cancellation of the main terms between the two functions based on the following computations 
 (see Proposition \ref{countings} (ii)): 
\begin{align}
|\cal{C}_+(\ell_1\ell_2)| &= |\cal{C}_1(\ell_1)||\cal{C}_1(\ell_2)|+  |\cal{C}_{-1}(\ell_1)||\cal{C}_{-1}(\ell_2)|  \nonumber \\
&= \frac{1}{2}(\ell_1\ell_2)^{10}-\frac{1}{2}\ell_1^9\ell_2^{10} -\frac{1}{2}\ell_1^{10}\ell_2^9+\O((\ell_1\ell_2)^9+\ell_1^8\ell_2^{10}+\ell_1^{10}\ell_2^8), \label{1-size-conj}\\
|\cal{C}_-(\ell_1\ell_2) | &=  |\cal{C}_1(\ell_1)||\cal{C}_{-1}(\ell_2)|+  |\cal{C}_{-1}(\ell_1)||\cal{C}_{1}(\ell_2)| \nonumber \\
& = \frac{1}{2}(\ell_1\ell_2)^{10}-\frac{1}{2}\ell_1^9\ell_2^{10} -\frac{1}{2}\ell_1^{10}\ell_2^9+\O((\ell_1\ell_2)^9+\ell_1^8\ell_2^{10}+\ell_1^{10}\ell_2^8). \label{-1-size-conj}
\end{align}
The other is considering the subfield extension $K(A[\ell_1\ell_2])^{\Lambda(\ell_1\ell_2)}/K$ (instead of $K(A[\ell_1\ell_2])/K$), which 
 leads to a smaller size of the conjugation invariant sets in $\Gal(K(A[\ell_1\ell_2])^{\Lambda(\ell_1\ell_2)}/K)$ and hence a smaller error term while applying  the effective  versions of the Chebotarev Density Theorem.

Now we move on to the proof of  Theorem \ref{main-thm}.
\begin{proof}
\begin{enumerate}
\item
Let $\ell_1, \ell_2\in \mathcal{P}$ as before.
Invoking Proposition \ref{estimate-1}, (\ref{transfer-to-Cheb}), Theorem \ref{effective-CDT} (i) with 
$K(A[\ell_1\ell_2])^{\Lambda(\ell_1\ell_2)}/K$, $G=P(\ell_1\ell_2)$, and $\cal{C}=\cal{C}_{\pm}(\ell_1\ell_2)$, Proposition \ref{countings} (i) with $m=\ell_1\ell_2$, Lemma \ref{hensel},  the criterion of N\'{e}ron-Ogg-Shafarevich for abelian varieties, and  (\ref{1-size-conj})-(\ref{-1-size-conj}), we get
\begin{align*}
 |\cal{S}(\cal{A})|& \ll_K \frac{x\log z}{z\log x}+x^{\frac{1}{2}}+ \max_{\substack{\ell_1\neq \ell_2\\ \ell_1, \ell_2\in \cal{P}}}\left| \pi_{\cal{C}_+(\ell_1\ell_2)}(x, K(A[\ell_1\ell_2])^{\Lambda(\ell_1\ell_2)}/K)- \pi_{\cal{C}_-(\ell_1\ell_2)}(x,  K(A[\ell_1\ell_2])^{\Lambda(\ell_1\ell_2)}/K)  \right|\\
& \ll_K \frac{x\log z}{z\log x}+x^{\frac{1}{2}} + \max_{\substack{\ell_1\neq \ell_2\\ \ell_1, \ell_2\in \cal{P}}} \left| \left(\frac{|\cal{C}_+(\ell_1\ell_2)|}{|P(\ell_1\ell_2)|}-\frac{|\cal{C}_-(\ell_1\ell_2)|}{|P(\ell_1\ell_2)|}\right)\Li(x) \right.\\
&\hspace{4cm} \left. + \left(|\cal{C}_+(\ell_1\ell_2)|+|\cal{C}_-(\ell_1\ell_2)|\right)x^{\frac{1}{2}}\left(\frac{\log |d_{K(A[\ell_1\ell_2])^{\Lambda(\ell_1\ell_2)}}|}{|P(\ell_1\ell_2)|}+\log(x)\right) \right|\\
&\ll_K \frac{x\log z}{z\log x}+ x^{\frac{1}{2}}+ \max_{\substack{\ell_1\neq \ell_2\\ \ell_1, \ell_2\in \cal{P}}} \left|\left(\frac{1}{\ell_1}+\frac{1}{\ell_2}\right)^2\frac{x}{\log x} +(\ell_1\ell_2)^{10}x^{\frac{1}{2}}\log(\ell_1\ell_2 N_A x)\right|.
\end{align*}

Note that since  $z<\ell_1, \ell_2<2z$, 
\[
|\cal{S}(\cal{A})| \ll_{A, K}  \frac{x\log z}{z\log x} +z^{20}x^{\frac{1}{2}}\log x.
\]
By taking $z=z(x)\sim \frac{x^{\frac{1}{42}}}{(\log x)^{\frac{1}{21}}}$, we obtain that for all sufficiently large $x$, 
\[
|\cal{S}(\cal{A})|\ll_{A, K} x^{\frac{41}{42}}(\log x)^{\frac{1}{21}}.
\]
Let $\ell> C_K(A)$ be a prime. The estimations for (\ref{estimate-0})-(\ref{estimate-6}) are similar, and are direct applications of Proposition \ref{countings} (i) and (iv) and Theorem \ref{effective-CDT} (i) with  $K(A[\ell])/K$, $G=G(\ell)$, and $\cal{C}$ to be one of (\ref{conjugacy-0})-(\ref{conjugacy-5}). Then  as $x\to \infty$,  each of the terms in (\ref{estimate-0})-(\ref{estimate-6}) is 
\[
\O_{A, K} \left(\frac{x}{\ell\log x}+\ell^{10}x^{\frac{1}{2}}\log(\ell N_A x)\right).
 \]
 By taking $\ell=\ell(x)\asymp \frac{x^{\frac{1}{22}}}{(\log x)^{\frac{2}{11}}}$, we obtain that each of the terms in (\ref{estimate-0})-(\ref{estimate-6}) is   $\O_{A, K}\left( \frac{x^{\frac{21}{22}}}{(\log x)^{\frac{9}{11}}}\right)$.
 In all, we obtain that for all sufficiently large $x$, 
 \[
 \pi_{A, \text{split}}(x) \ll_{A, K} x^{\frac{41}{42}}(\log x)^{\frac{1}{21}}.
 \]
 
 \item
Let $\ell_1, \ell_2\in \mathcal{P}$ as before. Invoking Proposition \ref{estimate-1} (\ref{transfer-to-Cheb}), Theorem \ref{effective-CDT} (ii) with $K(A[\ell_1\ell_2])^{\Lambda(\ell_1\ell_2)}/K$, $G=P(\ell_1\ell_2)$, and $\cal{C}=\cal{C}_{\pm}(\ell_1\ell_2)$, Proposition \ref{countings} (i) with $m=\ell_1\ell_2$, Lemma \ref{hensel},  the criterion of N\'{e}ron-Ogg-Shafarevich for abelian varieties, and (\ref{1-size-conj})-(\ref{-1-size-conj}), we obtain
\begin{align*}
 |\cal{S}(\cal{A})| & \ll_K \frac{x\log z}{z\log x}+x^{\frac{1}{2}} +  \max_{\substack{\ell_1\neq \ell_2\\ \ell_1, \ell_2\in \cal{P}}}\left| \pi_{\cal{C}_+(\ell_1\ell_2)}(x, K(A[\ell_1\ell_2])^{\Lambda(\ell_1\ell_2)}/K)- \pi_{\cal{C}_-(\ell_1\ell_2)}(x,  K(A[\ell_1\ell_2])^{\Lambda(\ell_1\ell_2)}/K)  \right|\\
& \ll_K \frac{x\log z}{z\log x}+ x^{\frac{1}{2}}+  \max_{\substack{\ell_1\neq \ell_2\\ \ell_1, \ell_2\in \cal{P}}}\left|\left(\frac{|\cal{C}_+(\ell_1\ell_2)|}{|P(\ell_1\ell_2)|}-\frac{|\cal{C}_-(\ell_1\ell_2)|}{|P(\ell_1\ell_2)|} \right)\Li(x) \right.\\
&\hspace{4cm}\left. +\left(|\cal{C}_+(\ell_1\ell_2)|^{\frac{1}{2}}+|\cal{C}_-(\ell_1\ell_2)|^{\frac{1}{2}}\right) x^{\frac{1}{2}} \log\left(M(K(A[\ell_1\ell_2])^{\Lambda(\ell_1\ell_2)}/K) x\right) \right|\\
&\ll_K \frac{x\log z}{z\log x}+ x^{\frac{1}{2}}+ \max_{\substack{\ell_1\neq \ell_2\\ \ell_1, \ell_2\in \cal{P}}}\left|\left(\frac{1}{\ell_1}+\frac{1}{\ell_2}\right)^2\frac{x}{\log x} +(\ell_1\ell_2)^{5}x^{\frac{1}{2}}\log\left((\ell_1\ell_2)N_Ax\right)\right|.
\end{align*}

Note that since  $z<\ell_1, \ell_2<2z$, 
\[
|\cal{S}(\cal{A})| \ll_{A, K}  \frac{x\log z}{z\log x} +z^{10}x^{\frac{1}{2}}\log x.
\]
By taking $z=z(x)\sim \frac{x^{\frac{1}{22}}}{(\log x)^{\frac{1}{11}}}$, we obtain that for all sufficiently large $x$, 
\[
|\cal{S}(\cal{A})|\ll_{A, K} x^{\frac{21}{22}}(\log x)^{\frac{1}{11}}.
\]
Let $\ell> C_K(A)$ be a prime. Similarly as before, by applying Proposition \ref{countings} (i) and 
(iv) and Theorem \ref{effective-CDT} (ii) with $L=K(A[\ell])$, $G=G(\ell)$, and $\cal{C}$ to be one of (\ref{conjugacy-0})-(\ref{conjugacy-5}),  we obtain that (\ref{estimate-0})-(\ref{estimate-6}) are
 \[
\O_{A, K}\left(\frac{x}{\ell\log x}+\ell^5x^{\frac{1}{2}}\log(\ell N_A x)\right).
 \]
Taking $\ell=\ell(x)\asymp \frac{x^{\frac{1}{12}}}{(\log x)^{\frac{1}{3}}}$, we obtain that each of the terms in  (\ref{estimate-0})-(\ref{estimate-6}) is  $\O_{A, K}\left(\frac{x^{\frac{11}{12}}}{(\log x)^{\frac{2}{3}}}\right)$.
 Thus, we obtain that for all sufficiently large $x$, 
 \[
 \pi_{A, \text{split}}(x) \ll_{A, K} x^{\frac{21}{22}}(\log x)^{\frac{1}{11}}.
 \]

\item
Let $\ell_1, \ell_2\in \mathcal{P}$ as before.
Invoking Proposition \ref{estimate-1}, (\ref{transfer-to-Cheb}), Theorem \ref{effective-CDT} (iii) with $K(A[\ell_1\ell_2])^{\Lambda(\ell_1\ell_2)}/K$, $G=P(\ell_1\ell_2)$, and $\cal{C}=\cal{C}_{\pm}(\ell_1\ell_2)$, Proposition \ref{countings} (i) and (iii) with $m=\ell_1\ell_2$, Lemma \ref{hensel},  the criterion of N\'{e}ron-Ogg-Shafarevich for abelian varieties, and  (\ref{1-size-conj})-(\ref{-1-size-conj}), we obtain
\begin{align*}
 |\cal{S}(A)| & \ll_K \frac{x\log z}{z\log x}+x^{\frac{1}{2}} +  \max_{\substack{\ell_1\neq \ell_2\\ \ell_1, \ell_2\in \cal{P}}}\left|\pi_{\cal{C}_+(\ell_1\ell_2)}(x, K(A[\ell_1\ell_2])^{\Lambda(\ell_1\ell_2)}/K)- \pi_{\cal{C}_-(\ell_1\ell_2)}(x,  K(A[\ell_1\ell_2])^{\Lambda(\ell_1\ell_2)}/K)  \right|\\
& \ll_K \frac{x\log z}{z\log x}+ x^{\frac{1}{2}}+ \max_{\substack{\ell_1\neq \ell_2\\ \ell_1, \ell_2\in \cal{P}}}\left|\left(\frac{|\cal{C}_+(\ell_1\ell_2)|}{|P(\ell_1\ell_2)|}-\frac{|\cal{C}_-(\ell_1\ell_2)|}{|P(\ell_1\ell_2)|} \right)\Li(x) \right.\\
&\hspace{4cm}\left. + \left(|\cal{C}_+(\ell_1\ell_2)|^{\frac{1}{2}}+|\cal{C}_-(\ell_1\ell_2)|^{\frac{1}{2}}\right)\left(\frac{|P(\ell_1\ell_2)^{\#}|}{|P(\ell_1\ell_2)|} \right)^{\frac{1}{2}}x^{\frac{1}{2}}\log \left(M(K(A[\ell_1\ell_2])^{\Lambda(\ell_1\ell_2)}/K)x\right)  \right|\\
&\ll_K \frac{x\log z}{z\log x}+ x^{\frac{1}{2}}+\max_{\substack{\ell_1\neq \ell_2\\ \ell_1, \ell_2\in \cal{P}}}\left|\left(\frac{1}{\ell_1}+\frac{1}{\ell_2}\right)^2\frac{x}{\log x} +(\ell_1\ell_2)x^{\frac{1}{2}}\log\left((\ell_1\ell_2)N_Ax\right)\right|.
\end{align*}
Note that since  $z<\ell_1, \ell_2<2z$, 
\[
|\cal{S}(A)| \ll_{A, K} \frac{x\log z}{z\log x} +z^{2}x^{\frac{1}{2}}\log x.
\]
By taking $z=z(x)\sim \frac{x^{\frac{1}{6}}}{(\log x)^{\frac{1}{3}}}$, we obtain that for all sufficiently large $x$, 
\[
|\cal{S}(A)|\ll_{A, K} x^{\frac{5}{6}}(\log x)^{\frac{1}{3}}.
\]
Let $\ell> C_K(A)$ be a prime. By applying Proposition \ref{countings}  (i) and 
(iv) and Theorem \ref{effective-CDT} (i) with $K(A[\ell])/K$, $G=G(\ell)$, and $\cal{C}$ to be one of  (\ref{conjugacy-0})-(\ref{conjugacy-5}), we obtain that    (\ref{estimate-0})-(\ref{estimate-6}) are
 \[
\O_{A, K}\left(\frac{x}{\ell\log x}+\ell x^{\frac{1}{2}}\log(\ell N_A x)\right).
 \]
Taking $\ell=\ell(x)\asymp \frac{x^{\frac{1}{4}}}{\log x}$, we obtain that each of the terms in (\ref{estimate-0})-(\ref{estimate-6}) is  $\O_{A, K}(x^{\frac{3}{4}})$.
 Thus,  we obtain that for all sufficiently large $x$, 
 \[
 \pi_{A, \text{split}}(x) \ll_{A, K} x^{\frac{5}{6}}(\log x)^{\frac{1}{3}}.
 \]

\end{enumerate}
\end{proof}
\medskip

\subsection{Proof of Theorem \ref{main-thm-RM}}
Let $A/K$ be an absolutely simple abelian surface such that $\End_{\overline{K}}(A) \otimes_{\Z} \Q= \End_K(A) \otimes_{\Z} \Q $ is isomorphic to  a real quadratic field $F$.  
 Then, by Lemma \ref{main-lem-RM}, 
\begin{align}
\pi_{A, \text{split}}(x, t) & =\#\{\fp\in \Sigma_K: N(\fp) \leq x, \fp\nmid N_A, \overline{A}_{\fp} \text{ splits}\} \nonumber \\
& \ll \#\{\fp\in \Sigma_K: N(\fp)=p \leq x, \fp\nmid N_A, P_{A, \fp}(X)=(X^2+b_1X+p)^2, |b_1|\leq 2\sqrt{p}, b_1 \in \Z\}  \label{1-pi-counting-RM}\\
& \hspace{0.5cm}+ x^{\frac{1}{2}}. \nonumber 
\end{align}
By  \cite[Remark 1.6, p. 29]{Lo16}, there is an explicit constant $C_K'(A)$ such that for any prime $\ell>C_K'(A)$ that splits in $F$, we have
$
\Im \bar{\rho}_{A,\ell} = \G(\ell). 
$
We fix  a prime $\ell=\ell(x)$ for which $\ell>C_K'(A)$ and splits in $F$.

We now bound  (\ref{1-pi-counting-RM}) 
by a  Chebotarev counting function as follows. As in the previous subsection, we work over the subfield extension $K(A[\ell])^{\Lambda(\ell)}/K$ rather than $K(A[\ell])/K$. We denote by
\begin{align}
\cal{C}_1^{'RM}(\ell)& \coloneqq \{(g_1, g_2)\in \G(\ell): \tr g_1=\tr g_2\}. \label{conj-G-1-RM}
\end{align}
    Recall $\cal{C}_1^{RM}(\ell)$ 
    defined in  (\ref{conj-P-1-RM}) 
   is the image of $\cal{C}_1^{'RM}(\ell)$ 
    in $\P(\ell)$. 
 Note that by (\ref{conjugacy-2}), we have $\Lambda(\ell)\cal{C}_{ 1}^{'RM}(\ell)\subseteq \cal{C}_{1}^{'RM}(\ell)$. Again, we invoke \cite[Lemma 2.6 (ii)]{Zy15} and obtain 
    \[
    \pi_{\cal{C}_{ 1}^{'RM}(\ell)}(x, K(A[\ell])/K)=\pi_{\cal{C}_{ 1}^{RM}(\ell)}(x, K(A[\ell])^{\Lambda(\ell)}/K).
    \]
  Note that  if   $\fp\nmid \ell N_A$ is counted by (\ref{1-pi-counting-RM}) 
then
$\ds\bar{\rho}_{A, \ell}\left(\left(\frac{K(A[\ell])/K}{\fp} \right)\right)$ is contained in $\cal{C}_1^{'RM}(\ell)$ and the reverse also holds for degree 1 primes. 
Thus, we obtain
\begin{align}
\pi_{A, \text{split}}(x) & \ll_K   \pi_{\cal{C}_{1}^{'RM}(\ell)}(x, K(A[\ell])/K)+ \O(x^{\frac{1}{2}}) \nonumber \\ 
& =\pi_{\cal{C}_1^{RM}(\ell)}(x, K(A[\ell])^{\Lambda(\ell)}/K) +\O(x^{\frac{1}{2}}). \label{estimate-1-RM} 
\end{align}

Based on the observations above, we give the proof of Theorem \ref{main-thm-RM}.
\begin{proof}
\begin{enumerate}
\item Let $\ell$ be a prime such that $\ell> C_K'(A)$ and $\ell$  splits in $F$. Invoking (\ref{estimate-1-RM}), Theorem \ref{effective-CDT} (i) with $K(A[\ell])^{\Lambda(\ell)}/K$ and $G=\P(\ell)$, Proposition \ref{countings-2}  (i) and (ii), Lemma \ref{hensel}, and the criterion of N\'{e}ron-Ogg-Shafarevich for abelian varieties, we deduce that
\begin{align*}
\pi_{A, \text{split}}(x) &\ll_K x^{\frac{1}{2}}+ \frac{|\cal{C}^{RM}_1(\ell)|}{|\P(\ell)|}\Li(x) +|\cal{C}^{RM}_1(\ell)|x^{\frac{1}{2}}\left( \frac{\log |d_{K(A[\ell])^{\Lambda(\ell)}}|}{|\P(\ell)|}+\log x\right)\\
& \ll_K \frac{x}{\ell \log x}+\ell^5x^{\frac{1}{2}}\log(\ell N_A x).
\end{align*}
Taking $\ell=\ell(x)\asymp_{K, F} \frac{x^{\frac{1}{12}}}{(\log x)^{\frac{1}{3}}}$, we obtain that for all sufficiently large $x$, 
\[
\pi_{A, \text{split}}(x) \ll_{A, K, F} \frac{x^{\frac{11}{12}}}{(\log x)^{\frac{2}{3}}}.
\]
\item Let $\ell$ be a prime as before. Invoking (\ref{estimate-1-RM}), Theorem \ref{effective-CDT} (ii) with $K(A[\ell])^{\Lambda(\ell)}/K$ and $G=\P(\ell)$, Proposition \ref{countings-2}  (i) and (ii), Lemma \ref{hensel}, and the the criterion of N\'{e}ron-Ogg-Shafarevich for abelian varieties, we deduce that
\begin{align*}
\pi_{A, \text{split}}(x) &\ll_K x^{\frac{1}{2}}+ \frac{|\cal{C}^{RM}_1(\ell)|}{|\P(\ell)|}\Li(x) +|\cal{C}^{RM}_1(\ell)|^{\frac{1}{2}} x^{\frac{1}{2}}\left( \log (M(K(A[\ell])^{\Lambda(\ell)}/K) x)\right)\\
& \ll_K \frac{x}{\ell\log x}+\ell^{\frac{5}{2}}x^{\frac{1}{2}}\log(\ell N_A x).
\end{align*}
Taking $\ell=\ell(x)\asymp_{K, F} \frac{x^{\frac{1}{7}}}{(\log x)^{\frac{4}{7}}}$, we obtain that for all sufficiently large $x$, 
\[
\pi_{A, \text{split}}(x) \ll_{A, K, F} \frac{x^{\frac{6}{7}}}{(\log x)^{\frac{3}{7}}}.
\]
\item  Let $\ell$ be a prime as before. Invoking (\ref{estimate-1-RM}), Theorem \ref{effective-CDT} (iii) with $K(A[\ell])^{\Lambda(\ell)}/K$ and $G=\P(\ell)$, Proposition \ref{countings-2}  (i), (ii) and (iii), Lemma \ref{hensel}, and the the criterion of N\'{e}ron-Ogg-Shafarevich for abelian varieties, we deduce that
\begin{align*}
\pi_{A, \text{split}}(x) &\ll_K x^{\frac{1}{2}}+ \frac{|\cal{C}^{RM}_1(\ell)|}{|\P(\ell)|}\Li(x) +|\cal{C}^{RM}_1(\ell)|^{\frac{1}{2}} \left(\frac{|\P^{\#}(\ell)|}{|\P(\ell)|}\right)^{\frac{1}{2}}  x^{\frac{1}{2}} \log \left(M(K(A[\ell])^{\Lambda(\ell)}/K) x\right)\\
& \ll_K \frac{x}{\ell\log x}+\ell^{\frac{1}{2}} x^{\frac{1}{2}}\log(\ell N_A x).
\end{align*}
Taking $\ell=\ell(x)\asymp_{K, E} \frac{x^{\frac{1}{3}}}{(\log x)^{\frac{4}{3}}}$, we obtain that for all sufficiently large $x$, 
\[
\pi_{A, \text{split}}(x) \ll_{A, K, E} x^{\frac{2}{3}}\log x.
\]
\end{enumerate}
\end{proof}
\medskip

\subsection{Proof of Theorem \ref{main-thm-CM}}

Let $A/K$ be an absolutely simple abelian surface such that $ \End_{\overline{K}}(A)\otimes_{\Z} \Q= \End_{K}(A)\otimes_{\Z} \Q$  is isomorphic to a CM quartic field $F$. 
%
 Then, by Lemma \ref{main-lem-CM}, 
\begin{align}
\pi_{A, \text{split}}(x, t) & =\#\{\fp\in \Sigma_K: N(\fp) \leq x, \fp\nmid N_A, \overline{A}_{\fp} \text{ splits}\} \nonumber \\
& \ll_K \#\{\fp\in \Sigma_K: N(\fp)=p \leq x, \fp\nmid N_A, P_{A, \fp}(X)=(X^2+b_1X+p)^2, |b_1|\leq 2\sqrt{p}, b_1 \in \Z\}  \label{1-pi-counting-CM}\\
& \hspace{0.5cm}+ x^{\frac{1}{2}}. \nonumber 
\end{align}
By the assumptions on $\phi: T_{F'} \to T_{F}$ and $F\subseteq K$, there is a constant $C''_K(A)>0$ such that  for any prime $\ell=\ell(x)>C''_K(A)$ that splits in $F$, we have 
$
\Im \bar{\rho}_{A,\ell} = \T(\ell)
$
 \cite[Theorem A, p. 118 and Example 1, p. 120]{BaGaKr03}.
Fix a prime  $\ell=\ell(x)>C''_K(A)$ that splits in $F$. We bound the equation (\ref{1-pi-counting-CM}) by  a Chebotarev counting functions as follows. Consider the subfield extension $K(A[\ell])^{\Lambda(\ell)}/K$ and denote by
\begin{align}
\cal{C}^{'CM}(\ell)& \coloneqq \{(g_1, g_2)\in T(\ell): \tr g_1=\tr g_2\}.\label{conj-G-1-CM}
\end{align}
    Note the union conjugacy classes $\cal{C}^{CM}(\ell)$  defined in  (\ref{conj-P-1-CM}) 
   is the image of $\cal{C}^{'CM}(\ell)$  in $\T(\ell)$. 
 Observe that by (\ref{conjugacy-2}), we have $\Lambda(\ell)\cal{C}^{'CM}(\ell)\subseteq \cal{C}^{'CM}(\ell)$. Again, we invoke \cite[Lemma 2.6 (ii)]{Zy15} and obtain 
    \[
    \pi_{\cal{C}^{'CM}(\ell)}(x, K(A[\ell])/K)=\pi_{\cal{C}^{CM}(\ell)}(x, K(A[\ell])^{\Lambda(\ell)}/K).
    \]
  Note that  if   $\fp\nmid \ell N_A$ is counted by (\ref{1-pi-counting-CM}), 
then
$\ds\bar{\rho}_{A, \ell}\left(\left(\frac{K(A[\ell])/K}{\fp} \right)\right)$ is contained in $\cal{C}^{'CM}(\ell)$ and the reverse also holds for degree 1 primes. Thus 
\begin{align}
\pi_{A, \text{split}}(x) & \ll_K  x^{\frac{1}{2}}+ \pi_{\cal{C}^{'CM}(\ell)}(x, K(A[\ell])/K)  \nonumber \\
& =x^{\frac{1}{2}}+\pi_{\cal{C}^{CM}(\ell)}(x, K(A[\ell])^{\Lambda(\ell)}/K). \label{estimate-1-CM} 
\end{align}

We now give the proof of Theorem \ref{main-thm-CM} based on the observations above.
\begin{proof}
 Let $\ell$ be a prime for which $\ell\nmid N_A$ and splits in $E$. Since  Artin's Holomorphic Conjecture holds for the abelian extension $ K(A[\ell])^{\Lambda(\ell)}/K$, we invoke Theorem \ref{effective-CDT} (ii) with $K(A[\ell])^{\Lambda(\ell)}/K$ and $G=\T(\ell)$, (\ref{estimate-1-CM}), Proposition \ref{countings-CM}, Lemma \ref{hensel}, and the criterion of N\'{e}ron-Ogg-Shafarevich for abelian varieties to get
\begin{align*}
\pi_{A, \text{split}}(x) &\ll_K x^{\frac{1}{2}}+ \frac{|\cal{C}^{CM}(\ell)|}{|\T(\ell)/\Lambda(\ell)|}\Li(x) +|\cal{C}^{CM}(\ell)|^{\frac{1}{2}} x^{\frac{1}{2}}\left( \log (M(K(A[\ell])^{\Lambda(\ell)}/K) x)\right)
\\
& \ll_K \frac{x}{\ell\log x}+\ell^{\frac{1}{2}}x^{\frac{1}{2}}\log(\ell N_A x).
\end{align*}
Taking $\ell=\ell(x)\asymp_{K, E} \frac{x^{\frac{1}{3}}}{(\log x)^{\frac{4}{3}}}$, we obtain that for all sufficiently large $x$, 
\[
\pi_{A, \text{split}}(x) \ll_{A, K, E} x^{\frac{2}{3}}(\log x)^{\frac{1}{3}}.
\]
\end{proof}

\medskip
\subsection{Proof of  (\ref{cor-1}) and Corollary \ref{main-cor}}

\begin{proof}[Proof of   (\ref{cor-1})]
Recall from  \cite[Theorem 4.3, p. 332]{MaNa02} that each absolutely simple abelian surface  over $\F_q$ is $\F_q$-isogenous to the Jacobian of a smooth projective curve of genus two. Thus, for
any prime $\fp\in \Sigma_K$ such that   $\fp\nmid N_A$ and $N(\fp)\leq x$, if $\overline{A}_{\fp}$ is not isogenous to the Jacobian of a smooth projective curve, $\fp$ is also counted by $ \pi'_{A, \text{split}}(x)$. So the inequality (\ref{cor-1}) holds. Moreover, if $\End_{\overline{K}}(A)\simeq \Z$, then  by the remark after equation (\ref{abs-simple}), we get the same bound as in Theorem \ref{main-thm} under each conjecture. 
\end{proof}

\begin{proof}[Proof of Corollary \ref{main-cor}]
 For a prime $\fp\in \Sigma_K$ such that $\fp\nmid N_A$ and  $N(\fp) \leq x$, the extremal value  for $|\overline{A}_{\fp}(\F_{\fp})|$ is $(q+1\pm \lfloor 2\sqrt{q}\rfloor)^2$, where $q=|\F_{\fp}|$. Let $\fp$ be an extremal prime of $A$. By \cite[Corollary 2.2, p. 205 and Corollary 2.14, p. 213]{AuHaLa13},  the characteristic polynomial of $\overline{A}_{\fp}$  is   
\[
 P_{A, \fp}(X)=(X^2\pm \lfloor 2\sqrt{q}\rfloor X+q)^2=X^4\pm 2\lfloor 2\sqrt{q}\rfloor X^3+(2q +\lfloor 2\sqrt{q}\rfloor^2) X^2\pm 2q\lfloor 2\sqrt{q}\rfloor X+q^2.
\] 
In particular, $a_{1, \fp}=\pm 2\lfloor 2\sqrt{q}\rfloor\notin\{0, \pm \sqrt{q}, \pm \sqrt{5q}, \pm \sqrt{2q}, \pm 2\sqrt{q}\}$, $a_{2, \fp}=2q +\lfloor 2\sqrt{q}\rfloor^2 \notin \{0, \pm q, \pm 2q, 3q\}$, and 
$\Delta_{A, \fp}=0$ is a square. Combining these results  with  \cite[Theorem 2.9, p. 324]{MaNa02}, we conclude that $\overline{A}_{\fp}$ is not  simple. Thus, we obtain that for all sufficiently large $x$, 
\begin{align*}
& \#\{\fp\in \Sigma_K: N(\fp)\leq x, \fp\nmid N_A, |\overline{A}_{\fp}(\F_{\fp})|=(q+1\pm \lfloor 2\sqrt{q}\rfloor)^2, q=|\F_{\fp}|\}  \ll \pi_{A, \text{split}}(x).
\end{align*}
The conclusion then follows easily.
\end{proof}
\bigskip
\section{Final remarks}
\subsection{Endomorphism rings of abelian surfaces of type II and type IV}
We offer several remarks regarding absolutely simple abelian surfaces of type II and type IV in Albert's classification. 

Let $A/K$ be an absolutely simple abelian surface of type II. In this case, $\End_{\overline{K}}(A)\otimes_{\Z} \Q$ is isomorphic to an indefinite quaternion algebra over $K$, then by the Murty-Patankar Conjecture,  the density of primes $\fp$ in $K$ such that  $\overline{A}_{\fp}$ splits is not zero.  Indeed, by  \cite[Theorem B]{Ac09}, we have  $\pi_{A, \text{split}}(x)\sim \frac{x}{\log x}$. Moreover, there is a positive density set of primes $\fp$ for which $\overline{A}_{\fp}$ is isogenous over $\F_\fp$ to the square of an elliptic curve.

Now shifting our focus to absolutely simple abelian surfaces of type IV,  we have $\End_{\overline{K}}(A)\otimes_{\Z} \Q$ is isomorphic to a quartic CM field $F$. In particular,  there is a zero density of primes $\fp$ such that $\overline{A}_{\fp}$ splits since $\End_{\overline{K}}(A)$ is commutative. If $F$ is  a primitive quartic CM field, then  $\End_{\overline{K}}(A)$ is not isomorphic to the maximal order of $F$. Because otherwise, by  \cite[Theorem 1, p. 36, Theorem 2, p. 37]{Go97} and the Chebotarev Density Theorem, there is a positive density set of primes $\fp$ such that $\overline{A}_{\fp}$ is not simple.

Again, let $A/K$ be an absolutely simple abelian surface of type IV. Let $\fp\nmid N_A$ be a prime of degree 1. We can derive refined results for the density of $p$-ranks of the reduction $\overline{A}_{\fp}$. Specifically, it is known that the density of primes $\fp$ for which the $p$-rank of  $\overline{A}_{\fp}$ is 2 (supersingular case) equals 0 (e.g., see \cite[Lemma 2, p. 409]{Xi96}). Assume  $F/\Q$ is Galois. By \cite[case (c) in the proof of Theorem 3.7 (ii), p. 125]{Go98}, the density of  primes $\fp$ for which the $p$-rank of  $\overline{A}_{\fp}$ is 1 (intermediate case) equals $\frac{1}{4}$. Invoking this result with \cite[p. 567, Theorem 2]{Sa16}, we conclude that the density of  primes $\fp$ for which the $p$-rank of  $\overline{A}_{\fp}$ is 0 (ordinary case) equals $\frac{1}{[K':K]} = \frac{3}{4}$, where $K'$ is the smallest field that all the endomorphisms of $A$ are defined. But this is absurd as $[K':K]$ is an integer. This implies that $F/\Q$ is not Galois.  

\subsection{Effectiveness in computing endomorphism rings}
Let $A$ be an abelian surface defined over a number field $K$. The result \cite[Proposition 2.18, p. 11]{Lo19} indicates an effective computable constant $C_{A, K}'$ depending on $A$ and $K$ such that if the good reduction $\overline{A}_{\mathfrak{p}}$ splits for any prime $\mathfrak{p}$ in $K$ with norm bound $C_{A, K}'$,  then either $A$ splits  over $K$ or $\End_{\overline{K}}(A)$ is an order in a quaternion algebra. 
An interesting question is to write $C_{A, K}'$ explicitly regarding invariants of $A$ and $K$.

\subsection{The Conjecture (\ref{AH-conj})}
It would be interesting to investigate both theoretically and statistically how explicitly the constant $C_{A/K}$ in Conjecture (\ref{AH-conj})  depends on $A/K$. Furthermore, it is also interesting to produce a conjectural formula of $\pi_{A, \text{split}}(x)$, with explicit constant in the main term,  for a principally polarized abelian surface $A$ (such as the Jacobian of a genus 2 curve)  that has real or complex multiplication. 




{\small{

}}

\end{document}